\documentclass{amsart}

\usepackage{latexsym,amsfonts,amssymb,exscale,enumerate}
\usepackage{amsmath,amsthm,amscd,mathrsfs}
\usepackage{hyperref}

\addtolength{\hoffset}{-1.6cm}
\addtolength{\textwidth}{3cm}

\setcounter{tocdepth}{1}
\input xy
\usepackage[all]{xy}
\xyoption{line}
\xyoption{arrow}
\xyoption{color}
\SelectTips{cm}{}

\usepackage{tikz}
\usetikzlibrary{decorations.markings}
\usetikzlibrary{decorations.pathreplacing}

\tikzset{->-/.style={decoration={
  markings,
  mark=at position #1 with {\arrow{>}}},postaction={decorate}}}

\tikzset{middlearrow/.style={
        decoration={markings,
            mark= at position 0.5 with {\arrow{#1}} ,
        },
        postaction={decorate}
    }
}

\usepackage{graphicx}
\usepackage{color}


\newcommand{\onenn}[1]{{\mathbf 1}_{#1}}

\newcommand{\F}{\cal{F}}

\newcommand{\arxiv}[1]{\href{http://arxiv.org/abs/#1}{\tt arXiv:\nolinkurl{#1}}}


\theoremstyle{plain}

\newtheorem{itheorem}{Theorem}

\theoremstyle{definition}

\theoremstyle{remark}

\theoremstyle{definition}
\newtheorem{thm}{Theorem}[section]
\newtheorem{cor}[thm]{Corollary}

\newtheorem{lem}[thm]{Lemma}
\newtheorem{rem}[thm]{Remark}
\newtheorem{prop}[thm]{Proposition}


\newcommand{\U}{\dot{{\bf U}}}
\newcommand{\Ucat}{\cal{U}}

\newcommand{\UA}{{_{\cal{A}}\dot{{\bf U}}}}
\newcommand{\sym}{{\rm Sym}}

\newcommand{\maps}{\colon}



\newcommand{\refequal}[1]{\xy {\ar@{=}^{#1}
(-1,0)*{};(1,0)*{}};
\endxy}


\hfuzz=6pc

\newcommand{\cat}[1]{\ensuremath{\mbox{\bfseries {\upshape {#1}}}}}

\newcommand{\To}{\Rightarrow}

\newcommand{\Hom}{{\rm Hom}}

\renewcommand{\to}{\rightarrow}
\newcommand{\excise}[1]{}
\newcommand{\hor}{\circ}
\newcommand{\ver}{\cdot}


\hfuzz=6pc





\def\Id{\mathrm{Id}}

\def\mf{\mathfrak}
\def\shuffle{\,\raise 1pt\hbox{$\scriptscriptstyle\cup{\mskip
               -4mu}\cup$}\,}

\newcommand{\ii}{ \textbf{\textit{i}}}


\numberwithin{equation}{section}

%
%
%
%

%


\def\emph#1{{\sl #1\/}}

%
%


\let\hat=\widehat
\let\tilde=\widetilde


\let\theta=\vartheta
\let\epsilon=\varepsilon


\usepackage{bbm}
\def\C{{\mathbbm C}}
\def\N{{\mathbbm N}}

\def\Z{{\mathbbm Z}}
\def\Q{{\mathbbm Q}}


\def\cal#1{\mathcal{#1}}%
\def\1{\mathbbm{1}}%
\def\nn{\notag}

\def\la{\langle}
\def\ra{\rangle}


%
%

%
%
%







\newcommand{\bbe}[1]{\xybox{%
  (-2,0)*{};
  (2,0)*{};
  (0,0);(0,-18) **\dir{-}; ?(.5)*\dir{<}+(2.3,0)*{\scriptstyle{#1}};
}}

\newcommand{\bbf}[1]{\xybox{%
  (-2,0)*{};
  (2,0)*{};
  (0,0);(0,-18) **\dir{-}; ?(.5)*\dir{>}+(2.3,0)*{\scriptstyle{#1}};
}}




\newcommand{\bbpef}{\xybox{%
  (-6,0)*{};
  (6,0)*{};
  (-4,0)*{}="t1";
  (4,0)*{}="t2";
  "t1";"t2" **\crv{(-4,-6) & (4,-6)}; ?(.15)*\dir{>} ?(.9)*\dir{>};
}}
\newcommand{\bbpfe}{\xybox{%
  (-6,0)*{};
  (6,0)*{};
  (-4,0)*{}="t1";
  (4,0)*{}="t2";
  "t2";"t1" **\crv{(4,-6) & (-4,-6)}; ?(.15)*\dir{>} ?(.9)*\dir{>};
}}

\newcommand{\bbcfe}[1]{\xybox{%
  (-6,0)*{};
  (6,0)*{};
  (-4,0)*{}="t1";
  (4,0)*{}="t2";
  "t1";"t2" **\crv{(-4,6) & (4,6)}; ?(.15)*\dir{>} ?(.9)*\dir{>}
  ?(.5)*\dir{}+(0,2)*{\scriptstyle{#1}};
}}
\newcommand{\bbcef}[1]{\xybox{%
  (-6,0)*{};
  (6,0)*{};
  (-4,0)*{}="t1";
  (4,0)*{}="t2";
  "t2";"t1" **\crv{(4,6) & (-4,6)}; ?(.15)*\dir{>}
  ?(.9)*\dir{>} ?(.5)*\dir{}+(0,2)*{\scriptstyle{#1}};
}}



\newcommand{\lowrru}[1]{\xybox{%
  (-8,0)*{};
  (8,0)*{};
  (-6,-18)*{};(6,-9)*{} **\crv{(-6,-13) & (6,-15)} ?(1)*\dir{>};
  (6,-9)*{};(6,0)*{}  **\dir{-} ?(.3)*\dir{ }+(2,0)*{\scs {\bf j}};
}}

\newcommand{\lowllu}[1]{\xybox{%
  (-8,0)*{};
  (8,0)*{};
  (6,-18)*{};(-6,-9)*{} **\crv{(6,-13) & (-6,-15)} ?(1)*\dir{>};
  (-6,-9)*{};(-6,0)*{}  **\dir{-} ?(.3)*\dir{ }+(-2,0)*{\scs {\bf j}};
}}

\newcommand{\bbdl}[1]{\xybox{%
  (2,0);(0,-8) **\crv{(2,-2)&(0,-6)}; ?(.5)*\dir{>}
}}
\newcommand{\bbdlu}[1]{\xybox{%
  (2,0);(0,-8) **\crv{(2,-2)&(0,-6)}; ?(.5)*\dir{<}
}}
\newcommand{\bbdr}[1]{\xybox{%
  (-2,0);(0,-8) **\crv{(-2,-2)&(0,-6)}; ?(.5)*\dir{>}
}}
\newcommand{\bbdru}[1]{\xybox{%
  (-2,0);(0,-8) **\crv{(-2,-2)&(0,-6)}; ?(.5)*\dir{<}
}}



%



\DeclareGraphicsExtensions{.pdf,.eps}
\usepackage{psfrag}
\DeclareGraphicsRule{.tif}{png}{.png}{`convert #1 `basename #1 .tif`.png}

\clubpenalty = 10000
\widowpenalty = 10000


\usepackage{bbm}

\def\cal#1{\mathcal{#1}}

\def \k {\mathbbm{k}}
\def \Z {\mathbbm{Z}}

\def \N {\mathbbm{N}}
\def \Q {\mathbbm{Q}}
\def \E {\mathcal{E}}
\def \F {\mathcal{F}}
\def \U {\mathcal{U}}

\def \C {\mathcal{C}}
\def \Tr{\operatorname{Tr}}

\def \Span{\operatorname{Span}}
\def \Ob{\operatorname{Ob}}

\def \HH{\operatorname{HH}}

\def \Id {{\rm Id}}

\def\k{\mathbbm{k}}




\newcommand\nc{\newcommand}
\nc\rnc{\renewcommand}
\nc\Kar{\operatorname{Kar}}
\nc\End{\operatorname{End}}
\nc\modQ {{\mathbb Q}}
\nc\modZ {{\mathbb Z}}
\nc\simeqto{\overset{\simeq}{\longrightarrow }}
\nc\modC {{\mathcal C}}
\nc\modD {{\mathcal D}}
\nc\CC{\mathbf{C}}

\newcommand{\scs}{\scriptstyle}


\nc\calU{\mathcal{U}}
\nc\cU{\calU}
\nc\col{\colon\thinspace}
\nc\calA{\mathcal{A}}
\nc\Ab{\mathbf{Ab}}
\nc\Ko{K_0}
\nc\TrhorCC{\Tr^{\mathrm{hor}}(\CC)}
\nc\AdCat{\mathbf{AdCat}}
\nc\TrCC{\Tr(\CC)}
\nc\Udot{\dot{\mathcal{U}}}
\nc\diag{\mathrm{d}}
\nc\modU {\mathcal{U}}
\nc\bfU{\mathbf{U}}
\nc\dU{\dot{\mathbf U}}
\nc\dUZ{{_\modZ\dot{\mathbf U}}}
\nc\UZ{{_\modZ \mathbf U} }
\nc\fsl{\mathfrak{sl}}
\nc\Uaa{{\bf U} (\mathfrak{sl}_2\otimes \Q[t,t^{-1}])}
\nc\UZslt{{_\modZ\mathbf{U}} (\mathfrak{sl}_2\otimes \Q[t])}
\nc\UdZslt{{_\modZ\dot{\mathbf{U}}} (\mathfrak{sl}_2\otimes \Q[t])}
\nc\LL{L^+\fsl_2}
\nc\UL{\mathbf U(\LL)}
\nc\UZL{\UZ(\LL)}
\nc\dUZL{\dUZ(\LL)}
\nc\dUL{\dU(\LL)}
\nc{\im}{\rm im}
\nc\Kom{\rm Kom}
\nc\GL{\rm{GL}}
\nc\g{\mathfrak{g}}

\nc\tG{\tilde{G}} \nc\tE{\tilde{E}}
\nc\Vect{\rm Vect}
\nc{\Gras}{{\rm {Gr}}}
\nc\FMod{\rm FMod}
\nc\yto[1]{\underset{#1}{\to}}
\nc\Ear{\yto{E}}

 


\newcommand\sE{{\cal{E}}}
\newcommand\sF{{\cal{F}}}

\def\l{\lambda}
\newcommand{\iccbub}[2]{
\xybox{%
 (-6,0)*{};
  (6,0)*{};
  (-4,0)*{}="t1";
  (4,0)*{}="t2";
  "t2";"t1" **\crv{(4,6) & (-4,6)}; ?(.7)*\dir{}+(-2,0)*{\scs #2}
  ?(.05)*\dir{>} ?(1)*\dir{>};
  "t2";"t1" **\crv{(4,-6) & (-4,-6)};
   ?(.3)*\dir{}+(0,0)*{\bullet}+(0,-3)*{\scs {#1}};
}}
\newcommand{\icbub}[2]{
\xybox{%
 (-6,0)*{};
  (6,0)*{};
  (-4,0)*{}="t1";
  (4,0)*{}="t2";
  "t2";"t1" **\crv{(4,6) & (-4,6)};?(.7)*\dir{}+(-2,0)*{\scs #2};
   ?(0)*\dir{<} ?(.95)*\dir{<};
  "t2";"t1" **\crv{(4,-6) & (-4,-6)};
   ?(.3)*\dir{}+(0,0)*{\bullet}+(0,-3)*{\scs {#1}};
}}

\newcommand{\curg}{\mathscr{C}\!\mf{g}}
\newcommand{\curkg}{\mathscr{C}_{\!\k}\mf{g}}
\newcommand{\cur}[1]{\mathscr{C}_{\! #1}\mf{g}}
\newcommand{\curpg}{\mathscr{C}^+\!\mf{g}}
\newcommand{\curpmg}{\mathscr{C}^\pm\!\mf{g}}
\newcommand{\curmg}{\mathscr{C}^-\!\mf{g}}
\newcommand{\dcurpg}{\dot{\mathscr{C}}^+\!\mf{g}}
\newcommand{\dcurpmg}{\dot{\mathscr{C}}^\pm\!\mf{g}}
\newcommand{\dcurmg}{\dot{\mathscr{C}}^-\!\mf{g}}
\newcommand{\dcurzg}{\dot{\mathscr{C}}^0\!\mf{g}}
\newcommand{\curzg}{\mathscr{C}^0\!\mf{g}}

\newcommand{\dcurkg}{\dot{\mathscr{C}}_{\!\k}\mf{g}}

\allowdisplaybreaks


\begin{document}

\date{\today}

\title{Current algebras and categorified quantum groups}

\author{Anna Beliakova}
\address{Universit\"at Z\"urich, Winterthurerstr. 190
CH-8057 Z\"urich, Switzerland}
\email{anna@math.uzh.ch}

\author{Kazuo Habiro}
\address{Research Institute for Mathematical Sciences, Kyoto University, Kyoto, 606-8502, Japan}
\email{habiro@kurims.kyoto-u.ac.jp}

\author{Aaron D.~Lauda}
\address{University of Southern California, Los Angeles, CA 90089, USA}
\email{lauda@usc.edu}

\author{Ben Webster}
\address{University of Virginia,
  Charlottesville, VA 22903, USA}
\email{bwebster@virginia.edu}

\begin{abstract}
We identify the trace, or 0th Hochschild homology, of type ADE categorified quantum groups with the corresponding current algebra of the same type.  To prove this, we show that 2-representations defined using categories of modules over  cyclotomic (or deformed cyclotomic) quotients of KLR-algebras correspond to local (or global) Weyl modules.  We also investigate the implications for centers of categories in 2-representations of categorified quantum groups.
\end{abstract}

\newcommand{\onel}{{\mathbf 1}_{\lambda}}


\maketitle

\tableofcontents

%
\section{Introduction}

One very powerful idea in mathematics is categorification, and its
necessary partner decategorification.  Most work in recent years has
understood decategorification to mean taking the Grothendieck group,
but there are other ways of interpreting this idea.  The one we will consider in this paper is the notion of {\bf trace}.

The trace $\Tr(\modC )$ of a $\Bbbk$-linear category $\modC$ is $\Bbbk$-vector space given by
\begin{gather*}
  \Tr(\C )=
\left( \bigoplus_{x\in \Ob(\modC )}\End_\C(x)
\right)/\Span_\k\{fg-gf\},
\end{gather*}
where 
$f$ and $g$ run through all pairs of morphisms $f\col x\to y$, $g\col y\to x$ with $x,y\in
\Ob(\modC )$.

Trace (or 0th Hochschild homology) and Grothendieck group are closely
related: there is a {\bf Chern
character map} $$h_{\mathcal{C}}\maps K_0(\mathcal{C})\to\Tr(\mathcal{C})$$
 sending the class of an object to the image of its identity
morphism in the trace.  The interplay of these two kinds of
decategorification has shown up in many contexts, most classically in
the various generalizations of the Riemann-Roch theorem.  See
\cite{CW} for a more categorical perspective and \cite{BGHL} for more
general discussion by the first three authors and Guliyev.

In a certain sense, trace decategorification is richer than
Grothendieck decategorification.  The Chern character map is usually injective,
but often fails to be surjective, so there are many classes in the
trace which do not correspond to any object. To use an extremely loose
analogy, the Grothendieck group is something like $H_0$ of a space,
and the trace or Hochschild homology like its homology.  In fact, the
Borel-Moore homology of a space $X$ can be interpreted as the Hochschild homology of
the category of constructible sheaves on $X$.

We will concentrate on only one small aspect of this large topic.  For
any 2-category $\mathbf{C}$, we can consider its trace, as defined in
Section \ref{sec_traceK0-2cats}. This trace is naturally a category.
A 2-representation of $\mathbf{C}$ is a 2-functor to
$\mathsf{Cat}$\footnote{More generally, we can speak of a
  2-representation of $\mathbf{C}$ in an arbitrary bicategory
  $\mathcal{D}$, such as the bicategory of rings, bimodules, and
  bimodule homomorphisms. This is a 2-functor $\mathbf{C}\to \mathcal{D}$.}, the
2-category of categories, functors and natural transformations.  As explained in Section~\ref{sec:2functor} the 2-representation  $R$ induces a representation of the 1-category $\Tr(\mathbf{C})$ sending each object $c$ to the trace of the category $R(c)$.

Actually, let us specialize this yet further: the 2-category
$\mathbf{C}$ that we'll consider is the categorified quantum group $\Ucat^*=\Ucat^*_Q(\mf{g})$
attached to a symmetric Kac-Moody Lie algebra $\mf{g}$~\cite{KL3}.  This is a 2-category
with many interesting aspects; for us the most important is its
``representation theory.''  For each highest weight $\lambda$, there
are two representation categories $\Ucat^{\lambda,\ast}$, and $\check{\Ucat}^{\lambda,\ast}$, with
Grothendieck groups that agree with the simple highest weight
representation $V(\lambda)$ of $\mf{g}$.  The category $\Ucat^{\lambda,\ast}$ is the
category of projective modules over the cyclotomic quotient as introduced in
\cite[\S 3.4]{KL1} and $\check{\Ucat}^{\lambda}$ the category
of projective modules over the deformed cyclotomic quotient defined in \cite{Web5,RouQH}.

When we take the Grothendieck group of the Karoubi completion of $\Ucat^*$, we obtain the
category $\dot {\bf {U}}(\mf{g})$, which is the idempotented version of the universal enveloping algebra
$\bf{U}(\mf g)$; the trace will prove to be quite a bit larger.
Our study of the trace was motivated by geometric considerations.
For each highest weight $\lambda$, there is collection of quiver varieties, constructed by
Nakajima \cite{Nak98}.
The quiver varieties and 2-category $\Ucat^*$ are closely related; $\Ucat^*$
acts in a natural way on the (quantum) coherent sheaves on these
varieties \cite{CKLquiver,Webcatq}, and many constructions which first
appeared in one context have analogs in the other (for example,
Lusztig's canonical basis appears naturally in both).

This philosophy suggests that the trace of the category $\Ucat^*$
should be connected to the homology of quiver varieties\footnote{The
  reader who is paying attention here might rightly complain
  ``Shouldn't it be connected to the Hochschild homology of $\Ucat^*$?''
Actually, in the geometric context, the grading on $\Ucat^*$
becomes homological in nature, and what we think of as the trace is
really part of the Hochschild homology.}.
Work of Varagnolo shows that
there is an action of
the current algebra $\bf { U}(\mf{g}[t])$ on the whole Borel-Moore
homology (or cohomology) of the quiver varieties for $\lambda$, identifying their sum with the Weyl
module of highest weight $\lambda$ over the current algebra (or dual Weyl module).
In this paper we will discuss the analogue of Varagnolo's construction in our
algebraic context, which is given by the trace decategorification
we have discussed.

\begin{itheorem} \label{thm:introA}
 Assume $\g$ is type ADE.
Then $\Tr(\Ucat^{\lambda,\ast})$ is isomorphic to the local Weyl module of highest
weight $\lambda$, and $\Tr(\check{\Ucat}^{\lambda,\ast})$ is isomorphic to the global
 Weyl module.  Dually, the center of $\Ucat^{\lambda,\ast}$  is isomorphic
 to the dual local Weyl module.
\end{itheorem}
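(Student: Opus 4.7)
The plan is to leverage the functoriality of trace from 2-categories to categories together with an a~priori computation of the trace of the full 2-category $\Ucat^*$. Specifically, the first task is to identify $\Tr(\Ucat^*)$ with the idempotented current algebra $\dot{\mathbf U}(\mathfrak g[t])$ in type ADE; this is the central technical input and I would expect it to be carried out by producing generators and relations on the trace side matching the presentation of $\dot{\mathbf U}(\mathfrak g[t])$. The generators beyond $E_i, F_i, H_i$ come from allowing dots on strands, which on the Hochschild level give the $t$-power generators $E_i \otimes t^k$, $F_i\otimes t^k$, $H_i\otimes t^k$; the relations should fall out of the KLR calculus, in particular the nilHecke algebra computation (dot through crossing, curl relations) and the mixed $EF$ bubble identities.

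Once this identification is in hand, the 2-representation structure makes $\Tr(\Ucat^\lambda)$ into a module over $\dot{\mathbf U}(\mathfrak g[t])$, as noted in Section~\ref{sec:2functor}. I would then isolate a distinguished ``highest weight vector'' $v_\lambda \in \Tr(\Ucat^\lambda)$, namely the class of the identity endomorphism of the rank-one cyclotomic quotient $R^\lambda_0$ at weight $\lambda$. The key properties to verify are: (i) $v_\lambda$ is annihilated by all $E_i \otimes t^k$ and by the defining cyclotomic relation $F_i^{\langle \alpha_i^\vee,\lambda\rangle+1}\cdot v_\lambda = 0$ (so that $(E_i \otimes t^k)\cdot v_\lambda$ vanishes, which follows from the cyclotomic condition translated into diagrammatics, and the $F$-relation is immediate from the identity $F_i^{N}\mathbf{1}_\lambda=0$ holding in the cyclotomic quotient itself); (ii) $v_\lambda$ generates $\Tr(\Ucat^\lambda)$, which should follow from the fact that every indecomposable projective over a cyclotomic KLR algebra is a summand of a product $F_{i_1}\cdots F_{i_n}\mathbf{1}_\lambda$, combined with the current-algebra version of this generation statement on the trace.

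Together (i) and (ii) provide a surjection from the local Weyl module $W(\lambda)$ onto $\Tr(\Ucat^\lambda)$. The main obstacle is proving injectivity, since the Chern character is typically not surjective and so dimensions must be controlled directly. I would expect to handle this by matching graded characters: the induced grading on $\Tr(\Ucat^\lambda)$ from the KLR $q$-grading should, weight-space by weight-space, reproduce the graded Hilbert series of $W(\lambda)$ known from Chari--Pressley and Fourier--Littelmann. Equivalently, one can use the geometric realization as Borel--Moore homology of Nakajima quiver varieties (Varagnolo's theorem) as the target of comparison, after identifying $\Tr(\Ucat^\lambda)$ with the Hochschild homology of the categorified geometric action of \cite{CKLquiver,Webcatq}. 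The global Weyl module statement then follows from the same argument with $\Ucat^\lambda$ replaced by its deformed version $\check{\Ucat}^\lambda$, the only change being that the cyclotomic relation now deforms polynomially, producing the Weyl module before specialization.

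For the dual statement, the center $Z(\Ucat^\lambda)$ is naturally paired with $\Tr(\Ucat^\lambda)$ via the standard trace/center pairing on any $\Bbbk$-linear category with finite-dimensional hom spaces. I would deduce that this pairing is perfect weight space by weight space (using finite-dimensionality of each graded piece of the local Weyl module in ADE type), and conclude that $Z(\Ucat^\lambda)$ is the graded dual of the local Weyl module. The expected hardest point overall remains the trace computation of $\Ucat^*$ and the dimension/character matching that upgrades the surjection $W(\lambda)\twoheadrightarrow \Tr(\Ucat^\lambda)$ to an isomorphism.
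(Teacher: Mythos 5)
Your high-level outline (produce a highest-weight vector in the trace, verify the defining relations of the Weyl module to obtain a surjection, then prove injectivity) does match the paper's framework, but the logical order and the key technical steps are genuinely different, and two of your steps have gaps. First, you propose to compute $\Tr(\Ucat^*)\cong\dot{\mathbf U}(\mathfrak g[t])$ \emph{a priori} and then deduce the Weyl-module identifications; the paper does the opposite. Theorem~\ref{thm:graded} (your step 1) is proved \emph{from} the cyclotomic case: injectivity of $\rho$ is deduced from Lemma~\ref{lem:nokill} together with the identification $\Tr(\check{\Ucat}^\lambda)\cong\mathbb W(\lambda)$ of Theorem~\ref{global-isomorphism}. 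Your plan to establish injectivity of $\rho$ ``by producing generators and relations on the trace side'' is the hard direction and no mechanism is offered; the diagrammatic calculus readily yields surjectivity (which is what the paper proves via a triangular-decomposition argument in Theorem~\ref{rho-surjective}), but establishing that no \emph{extra} relations hold in the trace is precisely what the Weyl-module faithfulness circumvents. In short, you cannot take Theorem B as input to Theorem A without a new argument for Theorem B that the proposal does not supply.

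Second, your two suggested injectivity arguments for $W(\lambda)\twoheadrightarrow\Tr(\Ucat^\lambda)$ differ from the paper and face difficulties. Matching graded Hilbert series requires knowing the Hilbert series of $\mathrm{HH}_0(R^\lambda_\mu)$ in advance, which is essentially equivalent to the theorem; and the geometric route (identifying this trace with Borel--Moore homology of quiver varieties and invoking Varagnolo) requires a separate non-trivial dictionary. The paper instead uses a much smaller amount of character information: by Kodera--Naoi the socle of $W(\lambda)$ is simple and concentrated in the single top degree $\langle\lambda,\lambda\rangle-\langle\lambda_{\min},\lambda_{\min}\rangle$, so it suffices to exhibit one non-zero class in that degree, which the symmetric Frobenius trace on $R^\lambda_{\lambda_{\min}}$ provides. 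Finally, your duality argument appeals to ``the standard trace/center pairing on any $\Bbbk$-linear category with finite-dimensional hom spaces,'' but such a pairing is not automatically perfect (e.g.\ it degenerates already for $\k[x]/(x^2)$); perfectness requires the symmetric Frobenius structure on the cyclotomic quotients, which is what Corollary~\ref{center-weyl} explicitly invokes via \cite[Thm.\ 3.18]{Web5} after constructing a cyclic pivotal structure on $\Ucat_Q(\mf g)$ in Proposition~\ref{c-cyclic}. Without that input, the dual-Weyl-module conclusion does not follow.
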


In fact, this result has recently been shown independently by Shan,
Varagnolo, and Vasserot~\cite{SVV}; their techniques are quite similar
to ours, having been inspired by the same geometric considerations.

Our motivation in studying traces was to identify the trace of the 2-category $\Ucat^*$ itself.  Here we prove the following theorem:

\begin{itheorem} \label{thm:introB}
Assume $\g$ is type ADE. The trace $\Tr(\Ucat^*)$ is canonically isomorphic to the
  category $\dot{\bf{ U}}(\mf{g}[t])$.   The isomorphisms of Theorem
  \ref{thm:introA} are
  induced by this isomorphism.
\end{itheorem}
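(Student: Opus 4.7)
The plan is to construct a canonical functor $\phi\colon\dot{\bf{U}}(\g[t]) \to \Tr(\Ucat^*)$ and show it is an isomorphism by combining a diagrammatic surjectivity argument with the injectivity furnished by Theorem~\ref{thm:introA}. On objects, $\phi$ is the identity on the weight lattice. On morphisms, one sends the idempotented generators $(E_i\otimes t^k)\onel$ and $(F_i\otimes t^k)\onel$ of the current algebra to the trace classes of the upward (resp.\ downward) strand of color $i$ in region $\lambda$ carrying $k$ dots, while the Cartan elements $(H_i\otimes t^k)\onel$ are sent to trace classes of $k$-fold dotted bubbles of color $i$. This reflects the standard heuristic identifying dots on KLR strands with the polynomial variable $t$, familiar from categorifications of current and affine algebras.

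Verifying that $\phi$ respects the current-algebra relations is a diagrammatic calculation. The Chevalley and Serre relations at $t=0$ descend from the known isomorphism $\Ko(\Ucat^*)\cong\dU(\g)$ via the Chern character. The relations involving positive powers of $t$, most notably the mixed commutator
\[
  [E_i\otimes t^k,\, F_j\otimes t^l]\cdot\onel=\delta_{ij}(H_i\otimes t^{k+l})\cdot\onel,
\]
are checked by combining cyclicity of trace with the $\mathfrak{sl}_2$-style bubble slide moves, the infinite Grassmannian relations, and the KLR dot-crossing relations of $\Ucat^*$.

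Surjectivity of $\phi$ is then obtained by a PBW-style normal form argument. Biadjointness of $\sE_i$ and $\sF_i$ together with cyclicity of trace absorb all cups and caps in a closed diagram, and the KLR relations pull dots to prescribed positions and resolve crossings modulo lower order terms. In this way every class in $\Tr(\Ucat^*)$ reduces to a linear combination of monomials in dotted strands and dotted bubbles, all of which lie in the image of $\phi$. Injectivity comes from Theorem~\ref{thm:introA}: each 2-representation $\check{\Ucat}^\lambda$ induces a map $\Tr(\Ucat^*) \to \Tr(\check{\Ucat}^\lambda)$, and the composition $\dot{\bf{U}}(\g[t])\xrightarrow{\phi}\Tr(\Ucat^*)\to\Tr(\check{\Ucat}^\lambda) \cong W(\lambda)$ is by construction the action on the global Weyl module. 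Since in type ADE the current algebra $\dot{\bf{U}}(\g[t])$ acts faithfully on the direct sum of all global Weyl modules, the kernel of $\phi$ is zero.

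The main obstacle is the surjectivity step. Reducing an arbitrary closed diagram to a normal form supported on straight dotted strands and dotted bubbles requires a delicate induction on the number of cups, caps, and crossings; in particular, controlling the interaction of dots with nested bubbles under trace cyclicity is the technical heart of the argument. The compatibility of $\phi$ with Theorem~\ref{thm:introA} is by contrast automatic: the local and global Weyl module identifications there are obtained precisely by pushing $\phi$ forward along the 2-representations $\Ucat^\lambda$ and $\check{\Ucat}^\lambda$.
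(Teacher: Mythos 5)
Your proposal follows the paper's route almost exactly: your $\phi$ is the paper's $\rho$ from Proposition~\ref{prop-cur}, surjectivity is Theorem~\ref{rho-surjective} (triangular decomposition, Proposition~\ref{prop:triangular}, plus the lexicographic normal-form induction of Theorem~\ref{thm:minus-surjection}), and injectivity is precisely Lemma~\ref{lem:nokill} applied via the identification $\Tr(\check{\Ucat}^\lambda)\cong\mathbb{W}(\lambda)$ of Theorem~\ref{global-isomorphism}. One correction: the Cartan generator $H_i\otimes t^k$ must map to the power-sum class $p_{i,k}(\lambda)$ of equation~\eqref{eq_defpil}, not a single $k$-dotted bubble, since single bubbles correspond to complete or elementary symmetric functions and the bubble-slide relation~\eqref{eq_powerslide2} only closes in the form needed for relation~\eqref{C3} when one uses power sums.
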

This result intimately links the study of 2-representations of $\Ucat^*(\mf{g})$ with the representation theory of the current algebra $\bf{ U}(\mf{g}[t])$.
 As explained above, any 2-representation of $\Ucat^*$ gives rise to a representation of $\Tr(\Ucat^*)$ and hence the current algebra.

The 2-category $\Ucat^*(\mf{g})$ is  known to act  on numerous categories of interest including:
\begin{itemize}
    \item categories of modules over the symmetric group~\cite{CR,BK1},
    \item parabolic category $\mathcal{O}$ for $\mf{gl}_N$~\cite{BS,Web5,HS},
    \item derived categories of coherent sheaves on Nakajima quiver varieties ~\cite{CKLquiver, Rou2},
  \item coherent sheaves on certain convolution varieties obtained from the affine Grassmannian \cite{Cautis-rigid},
    \item categorified Fock space representations of certain
      Heisenberg algebras \cite{CauLic,Cautis-rigid},
     \item category $\mathcal{O}$ for a rational Cherednik algebra of
       $G(n,1,r)$~\cite{ShanCrystal},
    \item categories of $\mf{sl}_n$-foams used in link homology theory~\cite{Mackaay-foams,LQR,QR}, and
    \item categories of $\mf{sl}_n$-matrix factorizations~\cite{MY}.
\end{itemize}
Theorem~\ref{thm:introB} indicates that all of these 2-representations
give rise to current algebra representations.  As discussed earlier,
there is Chern map relating the Grothendieck and trace
decategorifications, and the induced map commutes with the $\g$-action;
surprisingly, Theorem~\ref{thm:introA} shows that that there are
2-representations with the same Grothendieck decategorification can
have different trace decategorifications.

Finally, another natural construction on categories is the notion of the center of an additive category $Z(\mathcal{C})$.  This is defined as the commutative monoid of endo-natural transformations of the identity functor $\End(\Id_{\mathcal{C}})$.    The center and trace of a category are closely related.   Here we also prove the following:
\begin{itheorem} \label{thm:center}
For $\mathfrak{g}$ of any type, any 2-representation of $\Ucat^{\ast}$ into the 2-category of additive $\Bbbk$-linear categories gives rise to an action of $\Tr(\Ucat^{\ast})$ on the centers of the categories defining the 2-representation.
\end{itheorem}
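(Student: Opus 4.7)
The plan is to construct, for each 2-representation $R \colon \Ucat^* \to \AdCat$, a natural action of the trace category $\Tr(\Ucat^*)$ on the family of centers $\{Z(R(\lambda))\}_\lambda$; combined with Theorem~\ref{thm:introB} this yields the desired current-algebra action. The essential input is that every 1-morphism $f\colon \lambda \to \mu$ in $\Ucat^*$ is biadjoint, and hence so is its image $R(f)\colon R(\lambda) \to R(\mu)$, equipped with a unit and a counit
\begin{equation*}
  \eta\colon \Id_{R(\mu)} \Rightarrow R(f)\,R(f)^\vee, \qquad
  \epsilon\colon R(f)\,R(f)^\vee \Rightarrow \Id_{R(\mu)}
\end{equation*}
taken from the two different biadjunctions.

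For a 2-endomorphism $\alpha\colon f \Rightarrow f$ representing a class in $\Tr(\Ucat^*)(\lambda,\mu)$ and a central element $z \in Z(R(\lambda)) = \End(\Id_{R(\lambda)})$, I propose to define the action by the \emph{partial trace}
\begin{equation*}
  \alpha \cdot z \;:=\; \epsilon \circ \bigl( \bigl[ R(\alpha) \circ (R(f) \cdot z) \bigr] \cdot \Id_{R(f)^\vee} \bigr) \circ \eta,
\end{equation*}
where $R(f)\cdot z$ denotes horizontal whiskering of $z$ by the functor $R(f)$ and $\circ$ is vertical composition. The resulting natural transformation lies in $\End(\Id_{R(\mu)}) = Z(R(\mu))$ by construction. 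The first step is to check that this descends to the trace quotient: for $\beta\colon f' \Rightarrow f$ and $\gamma\colon f \Rightarrow f'$ the values $(\beta\circ\gamma)\cdot z$ and $(\gamma \circ \beta)\cdot z$ must agree. This is a cyclicity argument: sliding $z$ through the biadjunction using the zig-zag identities converts one expression into the other, and the fact that $z$ is a natural transformation of the identity functor guarantees that it commutes past the relevant whiskerings.

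Next one verifies functoriality, namely that for composable trace classes along $\lambda \to \mu \to \nu$ the iterated partial trace coincides with the action of the composite morphism in $\Tr(\Ucat^*)$; this reduces to a direct diagrammatic manipulation using the interchange law and the zig-zag identities. Composing the resulting action with the isomorphism $\Tr(\Ucat^*) \cong \dot{\mathbf{U}}(\mf{g}[t])$ of Theorem~\ref{thm:introB} then produces the promised current-algebra action on each $Z(R(\lambda))$. The main obstacle is not the formal construction itself, which is forced by biadjointness, but rather organizing the cyclicity and functoriality verifications so that all of the categorified quantum group coherences enter only via biadjointness and the standard partial-trace calculus already deployed in the proofs of Theorems~\ref{thm:introA} and~\ref{thm:introB}; once this is carefully set up, no additional structural input is required beyond the framework already established in the paper.
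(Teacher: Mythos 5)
Your construction is the right one in spirit — a partial trace using the biadjoint structure — and it coincides with the formula \eqref{center-action} the paper uses. But there is a genuine gap in the well-definedness step, and it is precisely the subtle point that the paper's last section is devoted to.

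You claim that $(\beta\circ\gamma)\cdot z = (\gamma\circ\beta)\cdot z$ follows from "sliding $z$ through the biadjunction using the zig-zag identities." This is not automatic. What is really needed is that the pivotal structure on $\Ucat^*_Q(\mf{g})$ be \emph{cyclic}, i.e.\ that for every 2-morphism the left dual and right dual agree ($f^{**}=f$). Zig-zag identities give you each adjunction separately, but they do not force the two duals to coincide, and without that coincidence the two ways of closing up a composite $\beta\gamma$ versus $\gamma\beta$ produce different answers. For the categorified quantum group this is a real obstruction: the $Q$-cyclic relation \eqref{eq_almost_cyclic} shows explicitly that the left and right duals of the $ij$-crossing differ by the factor $v_{ij}=t_{ij}^{-1}t_{ji}$, which under the paper's normalization $v_{ij}=(-1)^{a_{ij}}$ equals $-1$ whenever $i$ and $j$ are adjacent. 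So the naive pivotal structure coming from the cups and caps as written in Definition~\ref{defU_cat} is \emph{not} cyclic, and your proposed action would not descend to the trace quotient.

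The missing ingredient is Proposition~\ref{c-cyclic}: one must first rescale the adjunction data by a system of scalars $c_{i,\lambda}^\pm$ compatible with the $t_{ij}$ (meaning $c_{i,\lambda+\alpha_j}^+/c_{i,\lambda}^+=t_{ij}$, with $c_{i,\lambda+\alpha_i}^- c_{i,\lambda}^+=1$) to produce a \emph{modified} pivotal structure that is genuinely cyclic. Only after this rescaling does the general principle you are invoking — that the trace of a cyclic pivotal 2-category acts on the centers of the categories in any 2-representation — actually apply. A secondary, smaller point: you invoke Theorem~\ref{thm:introB} to pass to the current algebra, but that isomorphism holds only in type ADE, while Theorem~\ref{thm:center} is asserted for general simply-laced type; in the general case one should instead compose with the functor $\rho$ of Proposition~\ref{prop-cur}, which is defined for any simply-laced $\g$.
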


In particular, in type ADE, we can identify $\Tr(\Ucat^{\ast})$ with the current algebra to give an action of it on the centers of all of the categories listed above.  A special case of this fact was already observed by Brundan.  He made the surprising discovery that one could define an action of the Lie algebra $\hat{\mf{g}}:=\mf{gl}_{\infty}(\mathbb{C})$ on the center $Z(\cal{O}) = \bigoplus_{\nu} Z(\cal{O}_{\nu})$ of all integral blocks $\cal{O}_{\nu}$ of category $\cal{O}$ for $\mf{gl}_n$~\cite{Brundan}.
In this action, the Chevalley generators of $\hat{\mf{g}}$ act as certain trace maps associated to canonical adjunction maps between special translation functors that arise from tensoring with a $\mf{g}$-module and its dual.
Theorem \ref{thm:center} gives a new construction of Brundan's action as well as extends it to an action of the current algebra associated to $\hat{\mf{g}}$.

The paper is organized as follows: In the Sections
\ref{trace}--\ref{categorified-quantum}, we present some general facts
about the trace and define different versions of the  categorified
quantum groups and current algebra.  In Section \ref{current-algebra},
we define the map of the current algebra to the trace.
Finally, we prove Theorem A, using results from Theorems
\ref{local-isomorphism} and \ref{global-isomorphism} and Corollary \ref{center-weyl}. Theorem B is equivalent to Theorem \ref{thm:graded}. Theorem C is proven in the last
section where rescaling 2-functors needed to make $\Ucat^*(\mf{g})$ cyclic are also studied.

\vspace{0.2in}

{\bf Acknowledgments.}
The authors are grateful to Vyjayanthi  Chari for helpful discussions about representations of current algebras,
and to the anonymous referee for pointing out the difference between the current algebra and the universal enveloping algebra $U(\mf{g}[t])$ in infinite type.  We are also grateful to Jun Hu for comments on an earlier version of this article.
 A.B. was  supported by Swiss National Science Foundation under Grant PDFMP2-141752/1.
K.H. was supported by JSPS Grant-in-Aid for Scientific Research (C) 24540077. A.D.L  was partially supported by NSF grant DMS-1255334 and by the John Templeton Foundation.  B.W was supported by the NSF under Grant DMS-1151473.

\section{The trace decategorification map}
\label{trace}
%

%
\subsection{Traces of linear categories}
%

In what follows, we recall the notion of the trace of linear
categories and generalizations.  For more details, see
e.g. \cite{BHLZ,BGHL}.  We focus on linear categories over a fixed
field $\k$.

Let $\modC $ be a small $\k$-linear category.  Thus, the hom spaces
$\modC(x,y)$ are $\k$-vector spaces, and the composition is bilinear
over $\k$.

Define the {\em trace} $\Tr(\modC )$ of $\modC$, also known as the
zeroth Hochschild-Mitchell homology $\HH_0(\modC)$, by
\begin{gather*}
  \Tr(\C )
  =\left(\bigoplus_{x\in \Ob(\modC )}\End_\modC (x)\right)/\Span_\k\{fg-gf\},
\end{gather*}
where $f$ and $g$ runs through all pairs of morphisms $f\col
x\to y$, $g\col y\to x$ with
$x,y\in\Ob(\modC)$.  (Note that $\Tr(\C)$ depends on the base field
$\k$, but we usually omit it in the notation.)  For $f\col x\to x$,
let $[f]\in \Tr(\modC )$ denote the corresponding equivalence class.

Recall that a $\k$-linear category with one object is identified with a
$\k$-algebra.  For a $\k$-algebra $A$, we set
\begin{gather*}
  \Tr(A)=\HH_0(A) = A/[A,A]= A/\Span_\k\{ab-ba\;|\;a,b\in A\}.
\end{gather*}

The trace $\Tr$ gives a functor from the small $\k$-linear categories to
the $\k$-vector spaces.  If $F\col\mathcal C\to\mathcal D$ is a
$\k$-linear functor, then $F$ induces a linear map on traces
\begin{gather*}
  \Tr(F)\col\Tr(\mathcal C)\to\Tr(\mathcal D)
\end{gather*}
given by
$\Tr(F)([f]) = [F(f)]$
for endomorphisms $f\col x\to x$ in $\mathcal C$.  Furthermore, if
$\alpha\maps F \To F\maps \mathcal{C} \to \mathcal{D}$ is a natural
transformation of $\k$-linear functors, then $\alpha$ gives rise to a
linear map
\begin{align} \label{eq:nat}
  \Tr(\alpha)\col\Tr(\mathcal C) &\to\Tr(\mathcal D) \\
  [f \maps x \to x] & \mapsto [\alpha_x \ver F(f)\maps F(x)\to F(x)]. \nn
\end{align}

\begin{lem}
  \label{r1}
  Let $\C$ be a $\k$-linear additive category.
  Let $S\subset\Ob(\C)$ be a subset such that every object in $\C$ is
  isomorphic to the direct sum of finitely many copies of objects in
  $S$.  Let $\C|_S$ denote the full subcategory of $\C$ with
  $\Ob(\C|_S)=S$.  Then, the inclusion functor $\C|_S\to \C$ induces
  an isomorphism
  \begin{gather}
    \label{e9}
    \Tr(\modC|_S)\cong\Tr(\modC)
  \end{gather}
\end{lem}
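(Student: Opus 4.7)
The plan is to factor the inclusion $\C|_S \hookrightarrow \C$ through an intermediate ``matrix category''. Let $\Mat(\C|_S)$ denote the additive $\k$-linear category whose objects are finite tuples $(s_1, \ldots, s_n)$ of objects in $S$ and whose morphisms are matrices of morphisms in $\C|_S$, composed by matrix multiplication. Since $\C$ is additive and, by hypothesis, every object is isomorphic to a finite direct sum of objects in $S$, the functor $M \maps \Mat(\C|_S) \to \C$ sending $(s_1, \ldots, s_n) \mapsto s_1 \oplus \cdots \oplus s_n$ (and a matrix to the corresponding map between biproducts) is essentially surjective by hypothesis and fully faithful by the universal property of the biproduct in an additive category, hence an equivalence. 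Equivalences of $\k$-linear categories induce isomorphisms on traces: for a quasi-inverse pair $(F,G)$ with natural isomorphism $\alpha \maps GF \cong \Id$, naturality gives $GF(f) = \alpha_x^{-1} f \alpha_x$, so $[GF(f)] = [f]$ by the cyclic relation, whence $\Tr(G)\Tr(F) = \Id$ (and symmetrically). It therefore suffices to show that the inclusion $j \maps \C|_S \hookrightarrow \Mat(\C|_S)$ sending $s \mapsto (s)$ induces an isomorphism on traces, as its composition with $M$ recovers the original inclusion.

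For this step I define a candidate inverse
\begin{gather*}
\tau \maps \Tr(\Mat(\C|_S)) \to \Tr(\C|_S), \qquad
\tau\bigl([(f_{ij})]\bigr) := \sum_i [f_{ii}].
\end{gather*}
That $\tau$ respects the commutator relation reduces to the identity
\begin{gather*}
\sum_i [(ab)_{ii}] \;=\; \sum_{i,j}[a_{ij} b_{ji}] \;=\; \sum_{i,j}[b_{ji} a_{ij}] \;=\; \sum_j [(ba)_{jj}],
\end{gather*}
for matrices $a \maps (x_1, \ldots, x_n) \to (y_1, \ldots, y_m)$ and $b$ in the reverse direction, where the middle equality uses the cyclic property inside $\C|_S$. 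Clearly $\tau \circ \Tr(j) = \Id$. Conversely, writing $\iota_i \maps (s_i) \to (s_1, \ldots, s_n)$ and $\pi_j \maps (s_1, \ldots, s_n) \to (s_j)$ for the canonical biproduct injections and projections of $\Mat(\C|_S)$, one has the matrix decomposition $(f_{ij}) = \sum_{i,j} \iota_i \circ f_{ij} \circ \pi_j$, and the cyclic relation gives $[\iota_i f_{ij} \pi_j] = [\pi_j \iota_i f_{ij}] = \delta_{ij}[f_{ii}]$, so $\Tr(j) \circ \tau = \Id$ as well.

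The argument presents no serious obstacle; it amounts to the standard fact that the trace is invariant under passage from a $\k$-linear category to its additive (``matrix'') closure, and the only care needed is in matching the biproduct injection/projection relations $\pi_j \iota_i = \delta_{ij} \Id$ with the cyclic relation that defines the trace.
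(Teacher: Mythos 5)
Your proof is correct and follows essentially the same route as the paper's: the paper also factors the inclusion $\C|_S \to \C$ through the additive closure $(\C|_S)^\oplus$ (your $\Mat(\C|_S)$) and then uses that both steps induce isomorphisms on $\Tr$. The paper treats the two trace isomorphisms as standard facts and does not spell them out, whereas you supply the verifications — that equivalences induce isomorphisms on $\Tr$, and the matrix-trace inverse $\tau$ for the additive closure step — all of which are carried out correctly.
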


\begin{proof}
  The inclusion functor $\C|_S\to\C$ factors, uniquely up to natural
  isomorphisms, as
  \begin{gather*}
    \C|_S \to (\C|_S)^\oplus \simeq \C,
  \end{gather*}
  where $(\C|_S)^\oplus$ is the additive closure of $\C|_S$.  Here
  $\C|_S \to (\C|_S)^\oplus$ is the canonical functor, and $(\C|_S)^\oplus
  \simeq \C$ is the canonical equivalence.
  These functors induce isomorphisms on $\Tr$
  \begin{gather*}
    \Tr(\C|_S) \cong \Tr((\C|_S)^\oplus) \cong\Tr(\C).
  \end{gather*}
\end{proof}

%
\subsection{Split Grothendieck groups and Chern character}
\label{sec:split-groth-groups}
%

For a $\k$-linear additive category $\modC $, the {\em split
Grothendieck group} $K_0(\modC )$ of $\modC $ is the abelian group
generated by the isomorphism classes of objects of $\modC $ with
relations $[x\oplus y]_{\cong}=[x]_{\cong}+[y]_{\cong}$ for
$x,y\in\Ob(\modC)$.  Here $[x]_{\cong}$ denotes the isomorphism class
of~$x$.

For a $\k$-linear additive category $\modC$, the {\em Chern character}
for $\C$ is the $\k$-linear map
\begin{gather*}
  h_\modC \col K^\k_0(\C):=K_0(\modC )\otimes\k\longrightarrow \Tr(\modC )
\end{gather*}
defined by $h_\modC ([x]_{\cong})=[1_x]$ for $x\in \Ob(\modC )$.
(Although $h_\modC$ can be defined on $K_0(\modC)$, we consider
only the above $\k$-linear version for simplicity.)

%
\subsection{Chern character for Krull-Schmidt categories}
\label{sec:chern-character-maps}
%

A $\k$-linear additive category $\modC$ is said to be {\em
Krull-Schmidt} if every object in $\modC$ decomposes in a unique way as the direct
sum of finitely many indecomposable objects with local endomorphism
rings.  In a Krull-Schmidt category,
\begin{itemize}
\item an object is indecomposable if and only if its endomorphism ring
is local,
\item idempotents split.
\end{itemize}

Let $\modC$ be a $\k$-linear Krull-Schmidt category.  Fix a subset
$I\subset \Ob(\modC)$ consisting of exactly one from each isomorphism
class of indecomposable objects in $\modC$.  Then the split
Grothendieck group $K_0(\modC)$ is a free abelian group with basis
given by the isomorphism classes of indecomposable objects in $\modC$.
Hence we have
\begin{gather}
  \label{e7}
  K^\k_0(\modC) \cong \k\cdot I =\bigoplus_{x\in I}\k.
\end{gather}

Let $\mathcal{J}$ be the two-sided ideal in $\C|_I$ generated by
\begin{itemize}
\item $J(\C(x,x))$ for $x\in I$, and
\item $\C(x,y)$ for $x,y\in I$, $x\neq y$.
\end{itemize}
where $J(\C(x,x))$ is the Jacobson radical of the endomorphism ring
$\C(x,x)$.
That is, $\mathcal{J}$ is the smallest family of $\k$-subspaces
$\mathcal{J}(x,y)$ for $x,y\in I$ which contains the subspaces given
above and is closed under left and right composition with morphisms in
$\C|_I$.  This is {\it not} a subcategory, since it does not contain
the identity morphisms of objects.

In fact, $\mathcal{J}$ coincides with the Jacobson radical of the linear
category $\C|_I$, defined in \cite{Kelly,Mitchell}.

\begin{lem}
  \label{r13}
  For $x\in I$, we have
  \begin{gather}
    \label{e11}
    \mathcal{J}(x,x) = J(\C(x,x)),
  \end{gather}
\end{lem}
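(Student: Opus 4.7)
The plan is to establish the two inclusions. The inclusion $J(\C(x,x))\subseteq\mathcal{J}(x,x)$ is immediate from the definition, since $J(\C(x,x))$ sits among the generators of $\mathcal{J}$; so I will focus on the reverse inclusion $\mathcal{J}(x,x)\subseteq J(\C(x,x))$.

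The key input I will use is the Krull--Schmidt hypothesis: for each $x\in I$ the endomorphism ring $\C(x,x)$ is local, so $J(\C(x,x))$ coincides with the set of non-isomorphisms in $\C(x,x)$, and in particular is closed under addition. A generic element of $\mathcal{J}(x,x)$ is a finite $\k$-linear combination of compositions
\[
x\xrightarrow{f}y\xrightarrow{j}z\xrightarrow{g}x
\]
with $y,z\in I$, where $j$ is one of the specified generators: either $j\in J(\C(y,y))$ (forcing $y=z$) or $j\in\C(y,z)$ with $y\neq z$. Since $J(\C(x,x))$ is closed under sums, it will suffice to show that each such composite $g\circ j\circ f$ is a non-isomorphism in $\C(x,x)$.

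I will argue this by contradiction. Suppose $h:=g\circ j\circ f$ is invertible. Then $(h^{-1}g)\circ j\circ f=1_x$ exhibits $f\colon x\to y$ as a split monomorphism into the indecomposable $y$; the standard Krull--Schmidt argument (the induced idempotent $f h^{-1} g \circ j$ on $y$ splits and, as $y$ is indecomposable, must equal $0$ or $1_y$, with $0$ ruled out because $x\neq0$) forces $f$ to be an isomorphism, hence $y=x$ since both represent the same isomorphism class in $I$. The symmetric argument applied to $g\circ j\circ(fh^{-1})=1_x$ shows $g$ is an isomorphism and $z=x$. If $j\in\C(y,z)$ with $y\neq z$, this already contradicts $y=z=x$; and if $j\in J(\C(x,x))$, then $j=g^{-1}hf^{-1}$ would be invertible, contradicting that $J(\C(x,x))$ contains only non-isomorphisms. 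I don't anticipate any real obstacle here, the only delicate step being the split-mono-to-iso deduction, which is exactly the content of idempotent splitting in a Krull--Schmidt category.
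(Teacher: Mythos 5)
Your proof is correct and relies on the same key mechanism as the paper's: factoring an identity through an indecomposable $y$ produces a split idempotent on $y$, which by Krull--Schmidt forces $y\cong x$, whence $y=x$ in $I$. The paper streamlines the argument by first rewriting $\mathcal{J}(x,x)=J(\C(x,x))+\sum_{y\neq x}\C(y,x)\circ\C(x,y)$ (absorbing the middle generator into a two-step composition $g\circ f$ with $f\colon x\to y$, $g\colon y\to x$, $y\neq x$) and then applying the idempotent argument once, whereas you work directly with the three-fold composition $g\circ j\circ f$ and invoke it twice (for $f$ and for $g$) plus a short case analysis on $j$; this is a presentational rather than substantive difference.
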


\begin{proof}
  We have
  \begin{gather*}
    \mathcal{J}(x,x)
    = J(\C(x,x))+\sum_{y\in I,y\neq x}\C(y,x)\circ\C(x,y).
  \end{gather*}
  Therefore, it suffices to show that, for $x,y\in I$ with $x\neq y$, we have
  $\C(y,x)\circ\C(x,y)\subset J(\C(x,x))$.

  We will show that if $f\col x\to y$, $g\col y\to x$, then $gf\in
  J(\C(x,x))$.  Suppose $gf\not\in J(\C(x,x))$ for contradiction.
  Since $\C(x,x)$ is local, it follows that $gf$ is an isomorphism.
  Hence $f(gf)^{-1}g\in\C(y,y)$ is an idempotent.  Since $\C$ is
  Krull-Schmidt, it follows that $y$ has a direct summand isomorphic
  to $x$.  Since $y$ is indecomposable, it follows that $y\cong x$.  This is a
  contradiction.
\end{proof}

By Lemma \ref{r13}, the quotient category $(\C|_I)/\mathcal{J}$ has the
following hom spaces.
\begin{gather}
  \label{e12}
  ((\C|_I)/\mathcal{J})(x,y) = \begin{cases}
    D_x:=\C(x,x)/J(\C(x,x)),& x=y,\\
    0, & x\neq y.
  \end{cases}
\end{gather}
Note that $D_x$ is a division algebra over $\k$.  By \eqref{e12}, we
have
\begin{gather*}
  \Tr((\C|_I)/\mathcal{J})
  \underset{}{\cong}
  \bigoplus_{x\in I}D_x/[D_x,D_x].
\end{gather*}

Let $\eta\col \bigoplus_{x\in I}\k\to \bigoplus_{x\in I}D_x$ be the
composite
\begin{gather}
  \label{e10}
  \bigoplus_{x\in I}\k
  \underset{\eqref{e7}}{\cong}
  K^\k_0(\C)
  \overset{h_\C}{\longrightarrow}
  \Tr(\C)
  \underset{\eqref{e9}}{\cong}
  \Tr(\C|_I)
  \overset{}{\twoheadrightarrow}
  \Tr((\C|_I)/\mathcal{J})
  \underset{}{\cong}
  \bigoplus_{x\in I}D_x/[D_x,D_x],
\end{gather}
where $\twoheadrightarrow$ is induced by the projection
$\C|_I\to(\C|_I)/\mathcal{J}$.
It is easy to check that $\eta=\bigoplus_{x\in I}\eta_x$, where
$\eta_x\col\k\to D_x/[D_x,D_x]$ is defined by $\eta_x(1_\k)=[1_{D_x}]$.
By \eqref{e10}, we have the following.

\begin{lem}
  \label{r6}
  If $\eta_x$ is injective (i.e. $1\not\in[D_x,D_x]$) for all $x\in
  I$, then $h_\C$ is injective.
\end{lem}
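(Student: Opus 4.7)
The plan is a direct definition-chase through the factorization \eqref{e10}. First I would observe that, since $\eta$ decomposes as $\bigoplus_{x\in I}\eta_x$ (a fact established just before the lemma), injectivity of every $\eta_x$ immediately implies injectivity of $\eta$ as a direct sum of injective $\k$-linear maps. The parenthetical condition $1\not\in[D_x,D_x]$ simply restates injectivity of $\eta_x$: since $\eta_x\maps\k\to D_x/[D_x,D_x]$ sends $1_\k$ to $[1_{D_x}]$, and a $\k$-linear map out of $\k$ is injective precisely when it sends $1_\k$ to a nonzero element.

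Next, I would read \eqref{e10} as a factorization $\eta=\gamma\circ h_\C\circ\alpha$, where $\alpha\maps\bigoplus_{x\in I}\k\xrightarrow{\cong}K_0^\k(\C)$ is the isomorphism from \eqref{e7} and $\gamma$ denotes the composition of the three maps appearing after $h_\C$ in \eqref{e10}. Since $\alpha$ is an isomorphism, injectivity of $\eta$ forces injectivity of the composite $\gamma\circ h_\C$, and by the elementary fact that if $g\circ f$ is injective then $f$ is injective, this yields injectivity of $h_\C$.

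I do not anticipate any serious obstacle here: all of the structural content — the trace identification \eqref{e9}, the basis identification \eqref{e7}, and the decomposition $\eta=\bigoplus_x\eta_x$ — has already been put in place in the preceding paragraphs. The proof itself reduces to a one-line diagram chase once these ingredients are in hand, and the failure of surjectivity of the projection $\Tr(\C|_I)\twoheadrightarrow\Tr((\C|_I)/\mathcal{J})$ embedded in $\gamma$ is irrelevant to the argument.
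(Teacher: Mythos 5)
Your proof is correct and matches the paper's reasoning exactly: the paper compresses the same diagram chase into the single remark "By \eqref{e10}, we have the following." You have simply unpacked the factorization $\eta=\gamma\circ h_\C\circ\alpha$ and applied the elementary facts that a direct sum of injections is injective and that injectivity of $g\circ f$ forces injectivity of $f$, which is precisely what the paper intends.
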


Using Lemma \ref{r6}, we prove the following.

\begin{prop}\label{lem:Chern-injective}
  Let $\k$ be a perfect field.  Let $\modC$ be a $\k$-linear
  Krull-Schmidt category with finite dimensional endomorphism algebra
  for each indecomposable object.  Then, the Chern character map
  $h_\C\colon K_0(\modC)\otimes_\Z\k\to\Tr(\modC) $ is injective.
\end{prop}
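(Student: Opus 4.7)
The plan is to invoke Lemma \ref{r6}, which reduces the problem to showing that the map $\eta_x\colon \k \to D_x/[D_x,D_x]$ is injective for every $x \in I$. Since $\C$ is Krull-Schmidt, the ring $\End_\C(x)$ is finite-dimensional and local, so its quotient $D_x = \End_\C(x)/J(\End_\C(x))$ is a finite-dimensional division $\k$-algebra. Hence the task reduces to the following purely algebraic statement: for any finite-dimensional division $\k$-algebra $D$ over a perfect field $\k$, one has $1 \notin [D,D]$.

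My approach is to construct a $Z$-linear functional $D \to Z$ that annihilates $[D,D]$ and is nonzero on $1$, where $Z := Z(D)$ is the center of $D$. Since $\k$ is perfect, the finite extension $Z/\k$ is separable, and therefore $Z$ is itself perfect. The algebra $D$ is central simple over $Z$ of some reduced degree $n$, so $\dim_Z D = n^2$. Its reduced trace $\Tr_{\mathrm{red}}\colon D \to Z$ is $Z$-linear, vanishes on commutators (because $\Tr_{\mathrm{red}}(xy) = \Tr_{\mathrm{red}}(yx)$), and satisfies $\Tr_{\mathrm{red}}(1) = n$. Consequently, if $n$ is nonzero in $Z$, then $[1_D] \neq 0$ in $D/[D,D]$, which gives injectivity of $\eta_x$, and then Lemma \ref{r6} yields injectivity of $h_\C$.

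The main obstacle, and the point where the perfectness of $\k$ is essential, is to verify that $n \neq 0$ in $Z$, i.e., that $p := \mathrm{char}(\k)$ does not divide $n$. In characteristic zero this is automatic. In positive characteristic, one invokes K\"othe's theorem on central simple algebras: if $p \mid \mathrm{ind}(D)$, then $D$ must contain a nontrivial purely inseparable field extension of its center $Z$. Since $Z$ is perfect, it admits no such extension, forcing $p \nmid n$. With this input the reduced trace detects $1$ in $D/[D,D]$, and the proof is complete.
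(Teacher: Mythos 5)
Your proof is correct and follows essentially the same route as the paper: both reduce via Lemma \ref{r6} to showing $1 \notin [D_x,D_x]$ for each indecomposable $x$, both detect this with a central-field-linear trace functional that vanishes on commutators and takes a nonzero value at $1$, and both rest on the same key input — that over a perfect field of characteristic $p$ the degree of a central division algebra is coprime to $p$. The only cosmetic differences are that you use the reduced trace (so $\Tr_{\mathrm{red}}(1)=n$) where the paper uses the $K$-trace of left multiplication (so $\tau_x(1)=\dim_K D_x = n^2$), and that you phrase the coprimality input via a Köthe-type statement about purely inseparable subfields, whereas the paper cites Albert's Theorem 13 directly for the same conclusion.
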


\begin{proof}
  By Lemma \ref{r6}, it suffices to prove that for each $x\in I$,
  $1_{D_x}\not\in[D_x,D_x]$.

  Note that $D_x$ is a finite dimensional division algebra over $\k$.
  Let $K$ be the center of $D_x$, which is a finite extension field of
  $\k$.

  For $u\in D_x$, let $L_u\col D_x\to D_x$ be left multiplication by
  $u$, which is a $K$-linear map.  Define a $K$-linear map $\tau_x\col D_x\to K$ by
  \begin{gather*}
    \tau_x(u) =
    \operatorname{tr}(L_u).
  \end{gather*}
  We have $\tau_x([D_x,D_x])=0$, since
  \begin{gather*}
    \operatorname{tr}(L_{[u,v]}) =\operatorname{tr}([L_u,L_v])=0.
  \end{gather*}
  We have
  \begin{gather*}
    \tau_x(1_{D_x})=
    \operatorname{tr}(L_{1_{D_x}})
    =\dim_K D_x.
  \end{gather*}
  If $\k$ is of characteristic $p>0$, then since $\k$ is perfect, it
  follows from \cite[Theorem 13]{AAA} that $\dim_K D_x$ is not divisible by $p$.
  Hence it follows that $1_{D_x}\not\in[D_x,D_x]$, regardless of the
  characteristic of $\k$.
\end{proof}

\begin{prop}
  \label{r16}
  Let $\k$ be a field, and let $\C$ be a $\k$-linear Krull-Schmidt
  category such that for each indecomposable object $x$ we have
  $\C(x,x)/J(\C(x,x))\cong\k$.  (This condition holds when $\k$ is
  algebraically closed and each $\C(x,x)$ is finite dimensional.)
  Then the Chern character $h_\C$ is split injective with a unique
  splitting
  \begin{gather*}
    p_\C\col\Tr(\C)\to K^\k_0(\C)
  \end{gather*}
  such that, for $x\in I$, $p_\C([1_x])=[x]_{\cong}$ and $p_\C([f])=0$
  for $f\in J(\C(x,x))$.
\end{prop}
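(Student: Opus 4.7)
The plan is to exploit the computation already worked out in deriving \eqref{e10}. Under the hypothesis that $D_x := \C(x,x)/J(\C(x,x)) \cong \k$ for every $x \in I$, the target of $\eta$ simplifies:
\[
 \bigoplus_{x \in I} D_x/[D_x, D_x] \;\cong\; \bigoplus_{x \in I} \k \;\underset{\eqref{e7}}{\cong}\; K^\k_0(\C),
\]
and each $\eta_x \col \k \to \k$ sends $1_\k \mapsto [1_{D_x}] = 1$, hence is the identity. Therefore $\eta$ itself is an isomorphism, and in particular $h_\C$ is injective by Lemma~\ref{r6}.

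Next I would define $p_\C$ as the composite of the remaining maps in \eqref{e10} followed by $\eta^{-1}$:
\[
 p_\C \col \Tr(\C) \underset{\eqref{e9}}{\cong} \Tr(\C|_I) \twoheadrightarrow \Tr((\C|_I)/\mathcal{J}) \cong \bigoplus_{x\in I}\k \cong K^\k_0(\C).
\]
By construction $p_\C \circ h_\C = \eta^{-1}\circ\eta = \Id$, so this is a splitting. The two claimed formulas follow by tracking generators: for $x\in I$, $[1_x]\in\Tr(\C)$ is sent to the class of $1$ in $D_x/[D_x,D_x]$, and under the isomorphism $\bigoplus_{x\in I}\k\cong K_0^\k(\C)$ this is exactly $[x]_\cong$; while any $f\in J(\C(x,x))$ is killed already in passing to $(\C|_I)/\mathcal J$, so $p_\C([f])=0$.

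For uniqueness, I would argue that the two conditions already determine $p_\C$ on a spanning set. By Lemma~\ref{r1}, $\Tr(\C) \cong \Tr(\C|_I)$ is spanned by classes $[f]$ for $f \in \C(x,x)$, $x \in I$. The hypothesis $\C(x,x)/J(\C(x,x)) \cong \k$, together with the fact that $1_x$ maps to $1$ under this isomorphism, provides a $\k$-vector space decomposition $\C(x,x) = \k \cdot 1_x \oplus J(\C(x,x))$. Writing $f = c\cdot 1_x + g$ with $g\in J(\C(x,x))$ gives $[f] = c[1_x] + [g]$ in $\Tr(\C)$, and any splitting satisfying the prescribed values on $[1_x]$ and on $J(\C(x,x))$ must send $[f]$ to $c[x]_\cong$. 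The main subtlety is simply to be careful that the decomposition $\C(x,x) = \k\cdot 1_x \oplus J(\C(x,x))$ really follows from the hypothesis (the composite $\k\to\C(x,x)\to D_x$ is an isomorphism onto $\k\cong D_x$), but once this is noted the rest is immediate.
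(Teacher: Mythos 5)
Your proposal is correct and follows essentially the same route as the paper: the paper's proof simply invokes the isomorphism $D_x\cong\k$ together with \eqref{e10} and \eqref{e7}, which is exactly what you unwind, including the construction of $p_\C$ as the tail of \eqref{e10} and the verification of the splitting and uniqueness. The extra detail you supply (notably the uniqueness argument via the decomposition $\C(x,x)=\k\cdot 1_x\oplus J(\C(x,x))$) is accurate and fills in what the paper leaves implicit.
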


\begin{proof}
  We have $D_x=D_x/[D_x,D_x]\cong\k$ for each $x\in I$.  Using
  \eqref{e10} and \eqref{e7}, we obtain the result.
\end{proof}

%
\subsection{Graded categories}
%

A {\em graded} $\Bbbk$-linear category is a $\k$-linear category $\C$
equipped with an auto-equivalence
$\la1\ra\col\C\overset{\simeq}{\longrightarrow}\C$.  For $t\ge0$, set
$\la t\ra=\la1\ra^t$, and, for $t<0$, $\la t\ra=\la-1\ra^{-t}$, where
$\la-1\ra\col\C\overset{\simeq}{\longrightarrow}\C$ is an inverse
(unique up to natural isomorphism) of $\la1\ra$.


The auto-equivalence $\la1\ra$ induces $\k$-linear automorphisms
\begin{gather*}
  q=K^\k_0(\la1\ra)\col K^\k_0(\C)\overset{\cong}{\to} K^\k_0(\C),\\
  q=\Tr(\la1\ra)\col \Tr(\C)\overset{\cong}{\to}\Tr(\C),
\end{gather*}
which give $K^\k_0(\C)$ and $\Tr(\C)$, respectively,
$\k[q,q^{-1}]$-module structures.
\excise{Given a graded $\k$-linear category $\modC$, its Grothendieck group
$K_0(\modC)$ is a $\Z[q,q^{-1}]$-module with $q^t[x]:=[x \la t
\ra]_{\cong}$.  Likewise, the trace $\Tr(\modC)$ is
$\k[q,q^{-1}]$-module with $q^t [f]:=[f\la t \ra] $ for an
endomorphism $f \maps x \to x$ in $\modC$.}
The Chern character map
\[
h_\C\col K^\k_0(\modC) \to \Tr(\modC)
\]
is a $\k[q,q^{-1}]$-module homomorphism, since $h_\C$ is a natural
transformation.

A {\em translation} in a graded $\k$-linear category $\C$ is a family
of natural
isomorphisms
\begin{gather*}
  x\overset{\cong}{\to}x\la1\ra.
\end{gather*}
If a graded $\k$-linear category $\C$ admits a translation, then it
makes the action $q$ on $K^\k_0(\C)$ and $\Tr(\C)$ trivial.  Thus, in
this case, $K^\k_0(\C)$ and $\Tr(\C)$ are $\k$-vector spaces rather
than $\k[q,q^{-1}]$-modules.

Given any graded $\k$-linear category $\modC$, we can form a graded
$\k$-linear category $\modC^{\ast}$ with translation, such that
$\Ob(\C)=\Ob(\C^{\ast})$ and
\[
 \modC^{\ast}(x,y) := \bigoplus_{t \in \Z} \modC(x,y\la t\ra),
\]
for all $x,y \in \Ob(\modC)$.  Note that $\C^\ast$ is equipped with a
$\Z$-grading with the degree $t$ hom space given by
\begin{gather*}
  \modC^{\ast}_t(x,y) := \modC(x,y\la t\ra),\quad t\in \Z.
\end{gather*}
\excise{The translations $x \to x \la t \ra$ is given by $1_x$ in $\modC^{\ast}$ since
\[
 1_x \in \modC(x,x\la 0 \ra) = \modC\left(x,\left(x\la t \ra\right) \la -t\ra\right) \subset \modC^*(x,x\la t \ra),
\]
has inverse
\[
 1_x\la t\ra \in \modC(x \la t \ra, x \la t \ra) = \modC(x \la t \ra, (x \la 0 \ra) \la t \ra)   \subset \modC^{\ast}(x\la t\ra, x).
\]
}
Thus $\C^\ast$ is enriched over $\Z$-graded vector spaces.  Hence, the
trace $\Tr(\C^\ast)$ has a $\Z$-graded $\k$-vector space structure
\begin{gather*}
  \Tr(\C^\ast)=\bigoplus_{t\in \Z}\Tr_t(\C^\ast),
\end{gather*}
where $\Tr_t(\C^\ast)$ is spanned by $[f]$ for endomorphisms in
$\C^\ast$ of degree $t$. If $\Tr_t(\C^\ast)$ is nonzero for $t>0$, then the Chern character map is not surjective, since its image is contained in $\Tr_0(\C^\ast)$.

%

\subsection{$K_0$, $\Tr$ and $\Kar$ for 2-categories} \label{sec_traceK0-2cats}
%

We can extend many of the constructions defined above for (additive)
$\Bbbk$-linear categories to the 2-categorical setting.  

A {\em $\Bbbk$-linear 2-category } is a 2-category $\CC$ such that
\begin{enumerate}
\item for $x,y\in \Ob(\CC)$, the category $\CC(x,y)$ is equipped with a
  structure of a $\Bbbk$-linear category,
\item for $x,y,z\in \Ob(\CC)$, the functor
  $\circ\col \CC(y,z)\times \CC(x,y)\rightarrow \CC(x,z)$, called horizontal composition,
    is ``bilinear'' in the sense
  that the functors $-\circ f\col \CC(y,z)\rightarrow \CC(x,z)$ for
  $f\in \Ob(\CC(x,y))$ and $g\circ -\col \CC(x,y)\rightarrow \CC(x,z)$ for
  $g\in \Ob(\CC(y,z))$ are $\Bbbk$-linear functors.
\end{enumerate}
The composition in the categories $\CC(x,y)$ is called vertical and denoted by $\ver$.

The following definitions extend the split Grothendieck group, trace
and Karoubi envelope to the 2-categorical setting. Let $\CC$ be a $\k$-linear 2-category.
\begin{itemize}
\item We define the {\bf split Grothendieck group} $K_0^\k(\CC)$ of $\CC$ to be the $\k$-linear category with $\Ob(K_0^\k(\CC)) = \Ob(\CC)$ and with $K_0^\k(\CC)(x,y) := K_0^\k(\CC(x,y))$
for any two objects $x,y \in \Ob(\CC)$.
For $[f]_{\cong} \in
\Ob(K_0^\k(\CC)(x,y))$ and $[g]_{\cong} \in \Ob(K_0^\k(\CC)(y,z)) $ the
composition in $K_0^\k(\CC)$ is defined by $[g]_{\cong}  [f]_{\cong}
:= [g \circ  f]_{\cong}$.

\item We define the {\bf trace} $\Tr(\CC)$ of $\CC$ to be the $\k$-linear
  category with
  $\Ob(\Tr(\CC))=\Ob(\CC)$ as follows.  For $x,y\in \Ob(\CC)$, set
  $\Tr(\CC)(x,y)=\Tr(\CC(x,y))$.  For $x,y,z\in \Ob(\CC)$, given
  morphisms in this category $[\sigma] \in \Tr({\CC(x,y)})$ and
  $[\tau] \in \Tr({\CC(y,z)})$ represented by 2-morphisms $\sigma$ and
  $\tau$ in $\CC$, we have a product given by
  $[\tau ][\sigma ]=[\tau \hor\sigma ]$.  The identity
  morphism for $x\in \Ob(\Tr(\CC))=\Ob(\CC)$ is given by $[1_{I_x}]$.
  Since $\CC$ is $\k$-linear, $\Tr(\CC)$ obtains an induced linear structure.

\item We define the {\bf Karoubi envelope} $\Kar(\CC)$ of $\CC$ to be the $\k$-linear 2-category with $\Ob(\Kar(\CC)) = \Ob(\CC)$ and with hom categories $\Kar(\CC)(x,y) := \Kar(\CC(x,y))$.  The composition functor
$\Kar(\CC)(y,z) \times \Kar(\CC)(x,y) \to \Kar(\CC)(x,z)$ is induced
by the universal property of the Karoubi envelope from the composition
functor in $\CC$.  The fully-faithful additive functors $\CC(x,y) \to
\Kar(\CC(x,y))$ combine to form an additive $2$-functor $\CC \to
\Kar(\CC)$ that is universal with respect to splitting idempotents in
the  Hom-categories $\CC(x,y)$.
\end{itemize}

The homomorphisms $h_{\CC(x,y)}$ taken over all objects $x,y \in \Ob(\CC)$ give rise to a functor \begin{equation} \label{eq_chernchar}
h_{\CC} \col K_0^\k (\CC) \to \Tr(\CC)
\end{equation}
which is the identity map on objects and sends $K_0^\k(\CC)(x,y)  \to \Tr(\CC)(x,y)$ via the homomorphism $h_{\CC(x,y)}$.  It is easy to see that this assignment preserves composition since
\begin{align}
  h_{\CC}([g]_{\cong} [f]_{\cong}) = h_{\CC}([g \circ  f]_{\cong}) = [1_{g \circ f}]
   = [1_g \hor 1_f] = [1_g]  [1_f] = h_{\CC}([g]_{\cong}) h_{\CC}([f]_{\cong}).
\end{align}

\subsection{$2$-functoriality of $\Tr$ on linear $2$-categories} \label{sec:2functor}

A (strict) $2$-functor $F\col\mathbf C\to \mathbf D$ between linear $2$-categories $\mathbf C$ and
$\mathbf D$ is a {\em linear $2$-functor} if for $x,y\in\Ob(\mathbf C)$ the functor
$F\col\mathbf C(x,y)\to\mathbf D(x,y)$ is linear.  Then $F$ induces a linear functor
\begin{gather*}
  \Tr(F)\col\Tr(\mathbf C)\to \Tr(\mathbf D)
\end{gather*}
such that the map $F\col\Ob(\mathbf C)\to \Ob(\mathbf D)$ on objects gives the map
\begin{gather*}
  \Tr(F)=F\col\Ob(\Tr(\mathbf C))\to \Ob(\Tr(\mathbf D)),
\end{gather*}
and, for $x,y\in \Ob(\mathbf C)$, the linear functor
$F_{x,y}\col\mathbf C(x,y)\to \mathbf D(F(x),F(y))$ induces the linear map
\begin{gather*}
  \Tr(F)_{x,y}=\Tr(F_{x,y})\col\Tr(\mathbf C)(x,y)\to \Tr(\mathbf D)(F(x),F(y)).
\end{gather*}
It is possible to work more generally in the context of linear bicategories and non-strict 2-functors, however this generality is not needed here.

In the case when $\mathbf{D}=\cat{LinCat}$, the 2-category of $\Bbbk$-linear categories, $\Bbbk$-linear functors, and $\Bbbk$-linear natural transformations,   a 2-functor $F\maps \mathbf{C} \to \cat{LinCat}$ can be used to define a representation
\begin{equation}
  \rho_F\maps  \Tr(\mathbf{C}) \to \cat{Vect}_{\Bbbk}
\end{equation}
sending each object $x$ of $\Tr(\mathbf{C})$ to the $\Bbbk$-vector
space $\Tr(F(x))$, and each morphism $[\sigma]\maps x\to y$ in $\Tr(
\mathbf{C})$, with $\sigma \maps f
\To f \maps x \to y$ in $\mathbf{C}$, to the linear map
\begin{gather}
  \label{e1}
  \rho_F([\sigma])\maps \Tr(F(x))\longrightarrow \Tr(F(y)),
\end{gather}
such that for $[g\maps u\to u]\in\Tr(F(x))$ we have
\begin{gather*}
  \rho_F([\sigma])([g])
  =[F(f)(g)\ver F(\sigma)_u]
  =[F(\sigma)_u\ver F(f)(g)].
\end{gather*}
Here, note that $F(\sigma) \maps
F(f) \To F(f) \maps F(x) \to F(y)$ is a natural transformation.
Hence, using equation~\eqref{eq:nat}
we see that \eqref{e1} is well-defined.

%
\section{The current algebra  $\curg$}
\label{sec:current-algebra}
%

%
\subsection{The quantum group $\mathbf{U}_q(\mf{g})$}
%

%
\subsubsection*{Cartan data}
%

For most of this article, we will restrict our attention to Lie algebras of type ADE.  Since there are a few of our results which apply more generally, in this section, we'll discuss more general simply-laced Kac-Moody algebras. However, in all theorems for the rest of the paper {\bf we assume that $\mf g$ is finite type ADE unless we explicitly state otherwise}.  These algebras are associated to a symmetric Cartan datum consisting of
\begin{itemize}
\item a free $\Z$-module $X$ (the weight lattice),
\item for $i \in I$ ($I$ is an indexing set) there are elements $\alpha_i \in X$ (simple roots) and $\Lambda_i \in X$ (fundamental weights),
\item for $i \in I$ an element $h_i \in X^\vee = \Hom_{\Z}(X,\Z)$ (simple coroots),
\item a bilinear form $(\cdot,\cdot )$ on $X$.
\end{itemize}
Write $\langle \cdot, \cdot \rangle \maps X^{\vee} \times X
\to \Z$ for the canonical pairing. These data should satisfy:
\begin{itemize}
\item $(\alpha_i, \alpha_i) = 2$ for any $i\in I$,
\item $\la i,\lambda\ra :=\langle h_i, \lambda \rangle =  (\alpha_i,\lambda)$
  for $i \in I$ and $\lambda \in X$,
\item $(\alpha_i,\alpha_j) \in \{ 0, -1\}$  for $i,j\in I$ with $i \neq j$,
\item $\langle h_j, \Lambda_i \rangle =\delta_{ij}$ for all $i,j \in I$.
\end{itemize}
Hence $(a_{ij})_{i,j\in I}$ is a symmetric generalized Cartan matrix, where $a_{ij}=\langle
h_i, \alpha_j \rangle=(\alpha_i, \alpha_j)$.
We denote by $X^+ \subset X$ the dominant weights which are of the form $\sum_i \lambda_i \Lambda_i$ where $\lambda_i \ge 0$.

Associated to a symmetric Cartan data is a graph $\Gamma$ without loops or multiple edges.  The vertices of $\Gamma$ are the elements of the set $I$ and there is an edge from vertex $i$ to vertex $j$ if and only if $(\alpha_i, \alpha_j) =-1$.
\medskip

The quantum group ${\bf U}={\bf U}_q(\mf{g})$ associated to a simply-laced root datum as above is the unital
associative $\Q(q)$-algebra given by generators $E_i$, $F_i$, $K_{\mu}$ for $i
\in I$ and $\mu \in X^{\vee}$, subject to the relations:
\begin{center}
\begin{enumerate}[i)]
 \item $K_0=1$, $K_{\mu}K_{\mu'}=K_{\mu+\mu'}$ for all $\mu,\mu' \in X^{\vee}$,
 \item $K_{\mu}E_i = q^{\la \mu,i\ra}E_iK_{\mu}$ for all $i \in I$, $\mu \in
 X^{\vee}$,
 \item $K_{\mu}F_i = q^{-\la \mu, i\ra}F_iK_{\mu}$ for all $i \in I$, $\mu \in
 X^{\vee}$,
 \item $E_iF_j - F_jE_i = \delta_{ij}
 \frac{K_i-K_i^{-1}}{q-q^{-1}}$, where $K_i=K_{\alpha_i}$,
 \item For all $i\neq j$ $$\sum_{a+b=-\la i, j \ra +1}(-1)^{a} E_i^{(a)}E_jE_i^{(b)} = 0
 \qquad {\rm and} \qquad
 \sum_{a+b=-\la i, j \ra +1}(-1)^{a} F_i^{(a)}F_jF_i^{(b)} = 0 ,$$
       where $E_i^{(a)}=E_i^a/[a]!$, $F_i^{(a)}=F_i^a/[a]!$, with $[a]!=\prod_{m=1}^a\frac{q^m-q^{-m}}{q-q^{-1}}$.
\end{enumerate} \end{center}

We are primarily interested in the idempotent form $\dot{{\bf U}}_q(\mf{g})$ of ${\bf U}_q(\mf{g})$.

The $\Q(q)$-linear category $\dot{{\bf U}}=\dot{{\bf U}}_q(\mf{g})$
is defined as follows. The objects of $\dot{{\bf U}}$ are elements of $X$.
Given $\l, \nu \in X$, the hom space is defined as the $\Q$-module
$$\dot{{\bf U}}(\l, \nu):=\dot{{\bf U}}/\left( \sum_{\mu\in X^{\vee}} \dot{{\bf U}} (K_\mu-q^{\la\mu,\l\ra})+ \sum_{\mu\in X^{\vee}}(K_\mu-q^{\la\mu,\nu\ra})  \dot{{\bf U}}\right).
$$
The identity morphism of $\l\in X$ is denoted by $1_\l$.
The element in
$ \dot{\bfU}(\l,\mu)$ represented by $x \in {\bfU}$
can be written as
$1_\mu x1_\l=1_\mu x=x1_\l$, where $\mu-\l=|x|$, and
$$ E_i1_{\lambda} = 1_{\lambda+\alpha_i}E_i, \qquad F_i1_{\lambda} = 1_{\lambda-\alpha_i}F_i\ .$$

The composition
in $\dot{\bfU}$ is induced by multiplication in the  algebra, i.e.
$$ (1_\mu x1_\nu) (1_\nu y 1_\l)=1_\mu xy 1_\l$$
for $x,y \in \bfU$, $\l,\mu,\nu \in X$, which is zero unless
 $|x|=\mu-\nu$, $|y|=\nu-\l$.


The integral version $\UA$ is defined as
a $\Z$-linear subcategory of $\dot{{\bf U}}$
whose hom spaces are
generated by products of divided powers
$E_i^{(a)}1_{\lambda}$ and $F_i^{(a)}1_{\lambda}$.

%
\subsection{Definition of the current algebra  $\curg$}
%

First, assume that $\k$ is a field of characteristic 0.
Consider the current algebra
$\curkg$, the associative algebra generated over $\k$ by
$x^+_{i,r}$, $x^-_{i,s}$ and $\xi_{i,k}$ for $r,s,k\in \N\cup\{0\}$ and $i \in I$,
modulo the following relations:
\begin{itemize}
  \item For $i,j \in I$ and $r,s \in \N\cup\{0\}$
\[
[ \xi_{i,r}, \xi_{j,s} ] = 0 \tag{\bf C1}\label{C1}
\]

  \item For $i,j \in I$ and $r \in \N\cup\{0\}$
\[
 [ \xi_{i,0}, x^{\pm}_{j,r} ] =  \pm a_{ij} x^{\pm}_{j,r}\tag{\bf C2}\label{C2}
\]

  \item For $i,j \in I$ and $r \in \N$, $s\in \N\cup\{0\}$
\[
 [\xi_{i,r}, x^{\pm}_{j,s} ] =  \pm a_{ij} x^{\pm}_{j, r+s}\tag{\bf C3}\label{C3}
\]

  \item For $i,j \in I$ and $r,s \in \N\cup\{0\}$
\[
[x^\pm_{i, r+1}, x^\pm_{j,s}]=[x^\pm_{i,r}, x^\pm_{j,s+1}]\tag{\bf C4}\label{C4}
\]
  \item For $i,j \in I$ and $r,s \in \N\cup\{0\}$
\[
 [x^+_{i,r}, x^-_{j,s}] = \delta_{i,j} \xi_{i, r+s}\tag{\bf C5}\label{C5}
\]
  \item
    Let $i \neq j$.  If $a_{ij}=0$, then for $r,s\in\N\cup\{0\}$
\[
[x^{\pm}_{i,r},x^{\pm}_{j,s}]=0. \tag{\bf C6a}\label{C6a}
\]
    If $a_{ij}=-1$, then for $r_1,r_2,s\in\N\cup\{0\}$
\[
[x^{\pm}_{i,r_1},[x^{\pm}_{i,r_2},x^{\pm}_{j,s}]]=0. \tag{\bf C6b}\label{C6b}
\]
\end{itemize}
\noindent
We define
$$| x^\pm_{i,j}|=\pm \alpha_i, \quad |\xi_{j,s}|=0\, .$$

Instead of  \eqref{C3}, some authors use the relation:
for any $i,j \in I$ and  $r,s\in \N\cup\{0\}$
\[
 [\xi_{i,r+1}, x^{\pm}_{j,s} ] =[\xi_{i,r}, x^{\pm}_{j,s+1} ], \tag{\bf C3'}\label{C3prime}
\]
which together with \eqref{C2} implies \eqref{C3}.

In finite type, this algebra has a more familiar realization:
\begin{thm}[Drinfeld \cite{D}]\label{Drinfeld}
  For $\k$ of characteristic 0, we have an isomorphism $\curg\cong \mathbf{U}_\k(\mf g[t])$ to the universal
  enveloping algebra of polynomial currents in $\mf g$ via the map $x^\pm_{i,s}:=x^\pm_i\otimes t^s$, $\xi_{i,k}:=\xi_i\otimes t^k$  for $i\in I$, where
$x^+_i, x^-_i$ and $\xi_i$  are the standard Chevalley generators of $\bfU(\g)$.
\end{thm}
The current algebra is closely connected to the Yangian, which can be
thought of as its quantized universal enveloping algebra (see, for example
\cite{Bag}).

Note that this isomorphism does not hold in infinite type.
Unfortunately, the authors have been unable to find this fact noted in
the literature, but an explicit calculation for
$\g=\mathfrak{\widehat{sl}}_3$ readily confirms it; we thank the
referee for pointing this out.  This is one of
several reasons that the results of Section \ref{sec-injectivity}
cannot be proven outside finite type.  It seems entirely possible that
additional relations are needed to obtain the ``correct'' algebra for
infinite type Cartan data.

For a field $\k$ of characteristic $p$, we should use a divided power
version of the current algebra.  Consider the subalgebra $\cur{\Z}$
be the subalgebra of $\cur{\Q}$  generated over $\Z$ by $(x^{\pm}_{i,a})^r/r!$.  For a general field
$\k$, we let $\curkg\cong \cur{\Z}\otimes_\Z \k$.
We will typically leave out the $\k$ in the notation as understood.

\subsubsection*{Triangular decomposition}

Let $\curpg$, $\curmg$ and $\curzg$ be
the subalgebras of $\curg$ generated by
$\{ (x^+_{i,r})^n/n! \mid i\in I, r\in \N\cup \{0\}\}$, $\{ (x^-_{i,r})^n/n! \mid i\in I, r\in \N\cup \{0\}\}$
and $\{ \xi_{i,r} \mid i\in I, r\in \N\cup \{0\}\}$, respectively.
In finite type, the triangular decomposition of $\mf g$ shows that every element $f \in  \curg$ can be
expressed as a sum
\[ f=\sum f^+ f^0 f^-\quad{\text{where}}\quad f^\pm\in \curpmg , f^0\in \curzg .
\]
%
\subsection{The idempotent form}
%
The idempotented version  $\dot\curg$ of the current algebra is a
$\k$-linear category, whose objects are  $\lambda \in X$.
For $\l, \mu\in X$, the $\k$-vector space of morphisms from $\l$ to $\mu$
is defined as follows:
$$\dot\curg(\l, \mu):= \curg/ I_\xi$$
where
$$I_\xi:=
\sum_{i\in I} \curg\left(\xi_{i,0}-\la i,\l\ra\right) + \sum_{i\in I}\left(\xi_{i,0}-\la i,\mu\ra\right)\curg\, .$$
\noindent
We will denote the identity morphism of $\l\in X$ in $\dot\curg(\l, \l)$
 by $1_\l$. The element in
$ \dot\curg(\l,\mu)$ represented by $x \in \curg$
can be written as
$1_\mu x1_\l=1_\mu x=x1_\l$, $\mu-\l=|x|$. The composition
in $\dot\curg$ is induced by multiplication in the current algebra, i.e.
$$ (1_\mu x1_\nu) (1_\nu y 1_\l)=1_\mu xy 1_\l$$
for $x,y \in \curg$, $\l,\mu,\nu \in X$, which is zero unless
 $|x|=\mu-\nu$, $|y|=\nu-\l$.

\subsubsection*{Triangular decomposition}
In this subsection, we only consider $\g$ of finite type.  Let $\dcurpg$ and $\dcurmg$  be linear
subcategories of $\dot\curg$
whose hom spaces between $\l$ and $\mu$ are
$$ 1_\mu \curpg 1_\l:=\{1_\mu x^+ 1_\l \mid x^+\in \curpg \}$$
and
$$ 1_\mu \curmg 1_\l:=\{ 1_\mu x^-1_\l \mid x^-\in \curmg\}\, ,$$ respectively.
Let
$$\dcurzg := \oplus_\l 1_\l \curzg 1_\l.$$
Then any morphism $f$ of $ \dot\curg$ decomposes as
\[ f=\sum f^+ f^0 f^-\quad{\text{where}}\quad f^\pm\in \dcurpmg , f^0\in \dcurzg .
\]

\subsubsection*{Grading}

Both $\dot\curg$ and $\curg$ are naturally graded.
We will take the convention that for $X\in \g$, we have that $X\otimes
t^m$ has degree $2m$.

\subsubsection*{Shifting}
\label{sec:shifting}

For each $\xi\in \k$, the loop algebra is equipped with an
automorphism $\tau_\xi(X\otimes t^m)=X\otimes (t-\xi)^m$ for any $X\in
\g$.  For any module $M$ over $\curg$, we can precompose its
action with this automorphism to obtain a new module $M_\xi$, which
we will call the {\bf shift} of $M$ by $\xi$.

%
\subsection{Weyl modules}
\label{sec:Weyl}
 
For a fixed $\lambda$, let $m_i=\la i,\lambda\ra$.  Recall that the
universal enveloping algebra ${\bf U}(\g)$ has a integrable representation called the {\bf (finite) Weyl module} $V(\lambda)$.
 We
add the word ``finite'' here to avoid any confusion with the
corresponding modules over the current algebra.  These are modules
generated ${\bf U}(\g)$ by a single vector $v_\lambda$ with defining relations:
\begin{equation}
\g^+ v_{\lambda}=0, \quad \xi_{i}v_\lambda=\la i,\lambda\ra v_\lambda ,\quad
(x^-_{i})^{(m_i+1)} v_\lambda=0\,\qquad\qquad \text{for any $i\in I$.}\label{eq:0}
\end{equation}
If $\k$ has characteristic 0, then these modules give a complete,
irredundant list of the finite dimensional simple modules over ${\bf
  U}(\g)$.  If $\k$ has positive characteristic, then for most $V(\l)$
these will have proper submodules, and the finite dimensional simple
modules are their unique simple quotients.  Let us note for
completeness that we can also define a {\bf quantized Weyl module} $V_q(\l)$
over ${\bf U}_q(\g)$; this is defined by the same equations
\eqref{eq:0}, with the divided power replaced by a quantum divided
power.

Now, we discuss analogs of these modules over the current algebra.  The {\bf global Weyl module} $\mathbb W(\lambda)$ is the $\g[t]$-module
generated over $\curg$ by an element $w_{\lambda}$ with defining relations:
\begin{equation}
\g^+[t] w_{\lambda}=0, \quad \xi_{i,0}w_\lambda=\la i,\lambda\ra w_\lambda ,\quad
(x^-_{i,0})^{m_i+1} w_\lambda=0\,\qquad\qquad\text{for any $i\in I$.}\label{eq:1}
\end{equation}

The ring $\bf {U}({\mathfrak h}[t])$ (which can be thought of as a polynomial
ring in infinitely many variables) has a right action on $\mathbb
W(\lambda)$ sending $uw_\lambda\cdot h=uhw_{\lambda}$.  This action is
not faithful, but rather factors through a finitely generated quotient
$\mathbb{A}_\lambda$.  By
\cite[6.1]{CFK}, the ring $\mathbb{A}_\lambda $ is a
polynomial ring generated by an alphabet
$\{\mathsf{x}_{i,1},\dots,\mathsf{x}_{i,\la i,\lambda\ra}\}_{i\in
    I}$ with $\mathsf{x}_{i,k}$ having degree $2k$.  In particular, its Hilbert
series is $\prod_{i\in I} (1-t)^{-1}\cdots
(1-t^{\la i,\lambda\ra})^{-1}$.  Note that a maximal ideal in
$\mathbb{A}_\lambda $ is naturally encoded by scalars $\nu_{i,k}$
given by the image of $\mathsf{x}_{i,k}$; we will usually consider these
as polynomials \[\boldsymbol{\nu}_i(-z)=z^{\la i,\lambda\ra}+
\nu_{i,1} z^{\la i,\lambda\ra-1}+\cdots +\nu_{i,\la i,\lambda\ra}\]

For a Lie algebra ${\mathfrak a}$, let us denote by ${\mathfrak a}t[t]$ the ideal
of ${\mathfrak a}[t]$ generated by the elements of the form $x\otimes
t^n$ with $x\in {\mathfrak a}$ and $n>0$.

The {\bf local Weyl module} $W(\lambda)$ is the $\g[t]$-module
generated by an element $w_{\lambda}$ with defining relations:

\begin{equation}
\g^+[t] w_{\lambda}=0, \quad {\mathfrak h}t[t]w_\lambda =0,  \quad \xi_{i,0}w_\lambda=\la i,\lambda\ra w_\lambda ,\quad
(x^-_{i,0})^{m_i+1} w_\lambda=0\,\qquad\qquad \text{for any $i\in I$.}\label{eq:2}
\end{equation}
We can also consider the shifts of these modules by scalars
$W_\xi(\lambda),\mathbb W_\xi(\lambda)$; we will call these {\bf shifted
  Weyl modules}.  These arise naturally in the structure theory of
these modules, since:
\begin{lem}[\mbox{\cite[5.8]{CFK}}]
  The specialization of $\mathbb W(\lambda)$ at the maximal ideal for $\boldsymbol{\nu}_i$ in
  $\mathbb{A}_\lambda $ is isomorphic (after possible finite base
  extension) to the tensor product
  $\bigotimes_{\xi}W_{\xi}(\lambda_\xi)$ where $\xi$ ranges over the
  roots $\boldsymbol{\nu}_i(\xi)=0$ for all $i$, and $\lambda_\xi$ are
  roots such that $\la i,\lambda_\xi\ra$ is the multiplicity of
  $\xi$ as a root of $\boldsymbol{\nu}_i(z)$, and
  $\sum_\xi\lambda_\xi=\lambda$.
\end{lem}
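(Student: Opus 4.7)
My plan is to construct an explicit surjection
$$\Phi\colon \mathbb{W}(\lambda) \twoheadrightarrow \bigotimes_\xi W_\xi(\lambda_\xi),$$
verify that it factors through the specialization $\mathbb{W}(\lambda)\otimes_{\mathbb{A}_\lambda}\mathbb{A}_\lambda/\mathfrak{m}$, and then match dimensions on both sides. Without loss of generality extend $\k$ so that every $\boldsymbol{\nu}_i(z)$ splits into linear factors; this is what the ``finite base extension'' caveat allows.

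Set $w := \bigotimes_\xi w_{\lambda_\xi}$ and define $\Phi(u\cdot w_\lambda) := u\cdot w$ for $u\in\bfU(\g[t])$. Well-definedness amounts to checking the three defining relations of $\mathbb{W}(\lambda)$ hold on $w$. The weight relation $\xi_{i,0}w = \la i,\lambda\ra w$ reduces to $\sum_\xi\la i,\lambda_\xi\ra = \la i,\lambda\ra$, since shifting by $\tau_\xi$ fixes $\xi_{i,0}$. The highest-weight relation $\g^+[t]w = 0$ follows from the coproduct: $\g^+[t]$ acts by derivations and each tensorand is already annihilated by the shifted action. The Serre-type relation $(x^-_{i,0})^{m_i+1}w = 0$ follows by a Leibniz expansion, since in any distribution of $m_i+1$ copies of $x^-_{i,0}$ across the tensor factors, at least one factor must receive strictly more than $\la i,\lambda_\xi\ra$ copies and is therefore annihilated.

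Next I would verify that $\Phi$ factors through the specialization at $\mathfrak{m}$. The ring $\mathbb{A}_\lambda$ is the image of $\bfU(\mathfrak{h}[t])$ acting on the cyclic vector $w_\lambda$, and its generators $\mathsf{x}_{i,k}$ correspond to elementary symmetric functions in the formal eigenvalues recorded by the action of $\xi_{i,k}$. On $w$, a direct computation using $\tau_\xi$ yields that $\xi_i\otimes t^k$ acts by the scalar $\sum_\xi \xi^k\la i,\lambda_\xi\ra$, so the multiset of formal eigenvalues associated to $w$ consists of each $\xi$ with multiplicity $\la i,\lambda_\xi\ra$. Consequently $\mathsf{x}_{i,k}$ acts on $w$ by the corresponding coefficient of $\boldsymbol{\nu}_i$, matching the prescription of $\mathfrak{m}$; hence $\Phi$ descends to $\overline{\Phi}\colon \mathbb{W}(\lambda)\otimes_{\mathbb{A}_\lambda}\mathbb{A}_\lambda/\mathfrak{m}\longrightarrow \bigotimes_\xi W_\xi(\lambda_\xi)$.

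The main obstacle is the final step, namely showing $\overline{\Phi}$ is an isomorphism. Surjectivity is clear because each $W_\xi(\lambda_\xi)$ is cyclic on $w_{\lambda_\xi}$ and tensor products of cyclic modules for a Hopf algebra are cyclic on the tensor of the generators. For injectivity, I would invoke the freeness of the global Weyl module as a right $\mathbb{A}_\lambda$-module of rank $\dim W(\lambda)$; combined with the multiplicativity $\dim\bigotimes_\xi W_\xi(\lambda_\xi) = \prod_\xi\dim W(\lambda_\xi)$ and the shift-independence of $\dim W(\mu)$, the two sides have the same finite dimension over the residue field, forcing the surjection $\overline{\Phi}$ to be an isomorphism. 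The deep input here is the freeness/rank statement, typically proved by producing a Demazure-type filtration on $\mathbb{W}(\lambda)$ whose graded pieces specialize compatibly to the local Weyl modules.
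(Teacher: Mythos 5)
The paper does not actually prove this lemma; it is cited directly from \cite[5.8]{CFK}, so there is no internal argument to compare against. Your sketch reconstructs the standard reasoning of that reference. The construction of $\Phi$ via the universal property of $\mathbb{W}(\lambda)$ and the verification of the three defining relations on $w=\bigotimes_\xi w_{\lambda_\xi}$ are correct: the weight relation via $\sum_\xi\lambda_\xi=\lambda$, the $\g^+[t]$-annihilation via primitivity of the coproduct, and the Serre-type relation by pigeonhole over the tensor factors. Two cautions. A small one: with $\tau_\xi(X\otimes t^m)=X\otimes(t-\xi)^m$, the scalar by which $\xi_i\otimes t^k$ acts on $w_{\lambda_\xi}$ is $(-\xi)^k\la i,\lambda_\xi\ra$ rather than $\xi^k\la i,\lambda_\xi\ra$; this is harmless but needs to be reconciled with the paper's normalization $\boldsymbol{\nu}_i(-z)=z^{\la i,\lambda\ra}+\cdots$. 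The more substantive one: your dimension comparison in the last step silently uses $\dim W(\lambda)=\prod_\xi\dim W(\lambda_\xi)$, which is not a formal consequence of the tensor-product dimension formula and shift-invariance alone---it reduces to the nontrivial multiplicativity $\dim W(\lambda)=\prod_i(\dim W(\omega_i))^{\la i,\lambda\ra}$. In the literature this is obtained in tandem with freeness: one first settles the lemma at a generic maximal ideal (distinct roots, all $\lambda_\xi$ fundamental), where both surjectivity and the upper dimension bound can be checked directly via Lagrange interpolation and evaluation modules, and then freeness propagates the rank to the zero fiber $W(\lambda)$. You correctly flag freeness as the deep input, but the dimension multiplicativity should be called out explicitly as a companion theorem (or derived as a first step) rather than folded in tacitly, since your argument for general $\mathfrak{m}$ depends on it.
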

This decomposition is unique: $\lambda_\xi$ is the sum of the fundamental
weights with coefficients given by $\xi$'s multiplicities as roots of $\boldsymbol{\nu}_i(z)$.

Both of the global and local Weyl modules are naturally graded with the generating vector
having degree 0,  since the relations \eqref{eq:1} and \eqref{eq:2}
are homogeneous.

The global (resp. local) Weyl modules have a natural universal
property: there is a homomorphism of $W(\lambda)$ (resp. $\mathbb
W(\lambda)$) to an integrable module $M$ sending $w_\lambda\to m\in M$ if
and only if $\g^+[t] m=0$ and $\xi_{i,0}m=\la i,\lambda\ra m$
(resp. also  $ {\mathfrak h}t[t]m=0$).  In particular, this map
will be surjective if $m$ generates $M$, and homogeneous of degree $k$
if $M$ is graded with $m$ of degree $k$.

\subsection{Evaluation modules}
\label{sec:evaluation-modules}

For every $\chi\in A$ in some $\k$-algebra $A$, we have an evaluation homomorphism
$\operatorname{ev}_\chi\colon \g[t]\to \g\otimes A$ sending $x\otimes
t^i\mapsto \chi^ix$ for $x\in \g$.  For any representation $Z$ of
$\g\otimes A$, we have an induced pullback representation $Z_\chi$ over
$\g[t]$.  Particularly interesting cases include:
\begin{itemize}
\item $A=\k$.  In this case, if $V$ is an irreducible representation  over $\g$, then
  $V_\chi$ will also be irreducible.
\item $A=\k[x]$.  In this case, we have the universal evaluation
  module $Z_x$.
\end{itemize}
Note that the shift by $\xi\in\k$ of an evaluation module for $\chi\in
A$ is again an evaluation
module with parameter $\chi+\xi$.  Thus, our notations for shift and
evaluation will not conflict.

Consider the evaluation of finite Weyl module $V_\chi(\lambda)$ when $A=\k$.  Since the highest weight vector in $V_\chi(\lambda)$
satisfies the equations \eqref{eq:1}, we have a surjective map
$\mathbb{W}(\lambda)\to V_\chi(\lambda)$.

  More generally, assume that $\chi_1,\dots, \chi_N$ are distinct scalars.
\begin{lem}
 We have a
  surjective map
  \[\mathbb{W}(\lambda_1+\cdots+\lambda_N)\to
  V_{\chi_1}(\lambda_1)\otimes \dots \otimes V_{\chi_N}(\lambda_N),\]
  sending \[w_{\lambda_1+\cdots +\lambda_N}\mapsto v_{\lambda_1}\otimes
  \cdots \otimes v_{\lambda_N}.\]
\end{lem}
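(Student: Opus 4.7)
The plan is to apply the universal property of the global Weyl module stated just before the lemma. Set $\mu=\lambda_1+\cdots+\lambda_N$, $M=V_{\chi_1}(\lambda_1)\otimes\cdots\otimes V_{\chi_N}(\lambda_N)$, and $v=v_{\lambda_1}\otimes\cdots\otimes v_{\lambda_N}$. First I would construct a $\mathfrak{g}[t]$-homomorphism $\phi\colon\mathbb{W}(\mu)\to M$ sending $w_\mu$ to $v$ by checking the defining relations \eqref{eq:1} on $v$; then I would use the hypothesis that the $\chi_j$ are distinct to prove surjectivity of $\phi$.

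For existence, the coproduct on $\mathbf{U}(\mathfrak{g}[t])$ (which is primitive on Lie algebra elements) gives that any $x\otimes t^r$ acts on $M$ as $\sum_{j=1}^{N}\chi_j^r\,(x)_{(j)}$, where $(x)_{(j)}$ denotes the action of $x\in\mathfrak{g}$ on the $j$-th tensor factor only. Thus $x^+_{i,r}\cdot v=0$ because $x^+_i v_{\lambda_j}=0$ for every $j$, and $\xi_{i,0}\cdot v=\sum_j \langle i,\lambda_j\rangle\,v=\langle i,\mu\rangle\,v$. The remaining relation $(x^-_{i,0})^{m_i+1}\cdot v=0$ follows from standard $\mathfrak{sl}_{2,i}$-theory: $v$ is a highest-weight vector of $h_i$-weight $m_i$ in the integrable finite-dimensional $\mathfrak{g}$-module $M$, so the $\mathfrak{sl}_{2,i}$-submodule it generates is a quotient of the $(m_i+1)$-dimensional Weyl module, on which $(x^-_i)^{m_i+1}$ vanishes. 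Hence $\phi$ exists.

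For surjectivity I would invoke the Chinese Remainder Theorem: since the $\chi_j$ are pairwise distinct, the evaluation map
\[
\mathfrak{g}[t]\longrightarrow \mathfrak{g}^{\oplus N},\qquad x\otimes f(t)\longmapsto\bigl(f(\chi_1)x,\ldots,f(\chi_N)x\bigr),
\]
is surjective, the $\mathfrak{g}[t]$-action on $M$ factors through this map, and the resulting $\mathfrak{g}^{\oplus N}$-module is precisely the external tensor product $V(\lambda_1)\boxtimes\cdots\boxtimes V(\lambda_N)$. Since each $v_{\lambda_j}$ generates $V(\lambda_j)$ over $\mathbf{U}(\mathfrak{g})$, the vector $v$ generates $M$ over $\mathbf{U}(\mathfrak{g}^{\oplus N})\cong\mathbf{U}(\mathfrak{g})^{\otimes N}$, hence \emph{a fortiori} over $\mathbf{U}(\mathfrak{g}[t])$, so $\phi$ is surjective. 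Distinctness of the $\chi_j$ enters only here, and this Chinese-Remainder/Vandermonde step is the main (though mild) obstacle: without it the evaluation map would land in a proper subalgebra of $\mathfrak{g}^{\oplus N}$ and the generation argument would break down.
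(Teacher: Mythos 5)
Your proof is correct and takes essentially the same route as the paper. For existence you spell out the verification of the defining relations \eqref{eq:1} that the paper leaves implicit behind ``clear from the universal property,'' and for surjectivity your Chinese-Remainder argument is the same as the paper's Lagrange-interpolation argument (the existence of polynomials $f_i$ with $f_i(\chi_j)=\delta_{ij}$ is precisely the surjectivity of $\k[t]\to\k^N$, hence of $\g[t]\to\g^{\oplus N}$), with the same conclusion that $v$ generates $M$ under the evaluation operators.
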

\begin{proof}
The existence of this map is clear from the universal property.
To show that
this map is surjective as well,  note that by Lagrange interpolation,
there exists a
polynomial  $f_i$ such that $f_i(\chi_j)=\delta_{ij}$. In this case, $X\otimes f_i(t)$
acts on $V_{\chi_1}(\lambda_1)\otimes \dots \otimes
V_{\chi_N}(\lambda_N)$ by $1\otimes \cdots\otimes X\otimes
\cdots\otimes 1$, that is, by $X$ in the $i$th tensor factor.  Since
the tensor product of highest weight vectors generates under the
action of these operators, we are done.
\end{proof}
Note that the map factors through $W_{\chi_1}(\lambda_1)\otimes \dots \otimes
W_{\chi_N}(\lambda_N)$.
\begin{lem}\label{lem:coprod-ind}
  For any finite collection of linearly independent elements $u_i\in {\bf{U}}(\g[t])$, there is an
  integer $N$ such that for generic $\chi_1,\dots, \chi_N$ the images $z_i=(\operatorname{ev}_{\chi_1}\otimes
  \cdots\otimes \operatorname{ev}_{\chi_N}) \Delta^{(N)}(u_i)$ under
 the $N$-fold
  coproduct with the universal evaluation in $N$ independent
  parameters remain linearly independent.
\end{lem}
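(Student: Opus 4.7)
The plan is to prove that the $\Bbbk$-algebra homomorphism
\[
\psi_N \maps \mathbf{U}(\mathfrak{g}[t]) \longrightarrow \mathbf{U}(\mathfrak{g})^{\otimes N} \otimes_{\Bbbk} \Bbbk[\chi_1,\dots,\chi_N],
\qquad u \longmapsto (\operatorname{ev}_{\chi_1}\otimes\cdots\otimes\operatorname{ev}_{\chi_N})\circ \Delta^{(N)}(u),
\]
is injective on the finite-dimensional subspace $V := \operatorname{span}_{\Bbbk}\{u_1,\dots,u_m\}$ once $N$ is sufficiently large. From this the lemma follows: by flat base change, injectivity on $V$ implies $\Bbbk[\chi]$-linear independence of $\psi_N(u_1),\dots,\psi_N(u_m)$, so some $m\times m$ minor of their coefficient matrix in a basis of $\mathbf{U}(\mathfrak g)^{\otimes N}$ is a nonzero polynomial in $(\chi_j)$, whose non-vanishing at a specialization $(\chi_1,\dots,\chi_N)\in\Bbbk^N$ is a Zariski open condition.

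First I would establish that the kernels $\ker\psi_N$ form a descending chain. Using the counit $\varepsilon$ of $\mathbf{U}(\mathfrak g)$ together with the observation that $\varepsilon\circ\operatorname{ev}_{\chi} = \varepsilon_{\mathbf{U}(\mathfrak g[t])}$ (both kill $\mathfrak g[t]$, independent of $\chi$), and the coassociativity relation $(\operatorname{id}^{\otimes N}\otimes \varepsilon_{\mathbf U(\mathfrak g[t])})\circ \Delta^{(N+1)} = \Delta^{(N)}$, one checks directly that $(\operatorname{id}^{\otimes N}\otimes\varepsilon)\circ\psi_{N+1} = \psi_N$. Hence $\ker\psi_{N+1}\subseteq \ker\psi_N$, so since $V$ is finite-dimensional the chain $V\cap\ker\psi_N$ stabilizes, and injectivity on $V$ for $N$ large reduces to showing $\bigcap_N \ker\psi_N = 0$ in $\mathbf U(\mathfrak g[t])$.

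For this reduction, any $u \in \bigcap_N \ker \psi_N$ has $\psi_N(u)|_{\chi=c}=0$ in $\mathbf{U}(\mathfrak g)^{\otimes N}$ for every specialization $c\in\Bbbk^N$, so $u$ acts as zero on every tensor product of evaluation modules $V(\lambda_1)_{\chi_1}\otimes\cdots\otimes V(\lambda_N)_{\chi_N}$. By the Chari--Pressley classification (valid for $\mathfrak g$ of finite type ADE), every finite-dimensional simple $\mathbf U(\mathfrak g[t])$-module is of this form, so $u$ annihilates every finite-dimensional simple module.

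The main obstacle is upgrading this to $u = 0$. One option is to invoke the residual finite-dimensionality of $\mathbf U(\mathfrak g[t])$, deduced from an abundance of finite-dimensional quotients obtained via local Weyl modules $W(\lambda)$ for sufficiently dominant $\lambda$ combined with PBW and weight space arguments. Alternatively, one can give a direct Vandermonde-style argument using a PBW basis: for a PBW monomial $M = y_{\beta_1,r_1}\cdots y_{\beta_k,r_k}$ with distinct $r_j$, the coefficient of $\chi_1^{r_1}\cdots \chi_k^{r_k}$ in $\psi_N(M)$ for $N \geq k$ computes to the pure tensor $x_{\beta_1}^{(1)}\otimes\cdots\otimes x_{\beta_k}^{(k)}\otimes 1^{\otimes(N-k)}\in \mathbf{U}(\mathfrak g)^{\otimes N}$. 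By choosing $\chi$-monomial indices adapted to a PBW basis of $V$, one isolates these coefficients and concludes linear independence of $\psi_N(u_i)$ directly, bypassing the residual finite-dimensionality step.
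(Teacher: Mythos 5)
Your overall setup matches the paper's: reduce to the universal evaluation over $\Bbbk[\chi_1,\dots,\chi_N]$, show $\Bbbk[\chi]$-linear independence there, and conclude that independence after specialization is a nonempty Zariski-open condition. The flat base change step is correct, and the observation that $(\operatorname{id}^{\otimes N}\otimes\varepsilon)\circ\psi_{N+1}=\psi_N$ giving a descending chain of kernels is a nice (if unnecessary) addition: the paper avoids it entirely by expanding the $u_i$ in a PBW basis and taking $N$ to be the maximal length of a PBW monomial appearing in the expansion, so one never needs the stronger claim $\bigcap_N\ker\psi_N=0$, only injectivity on the finite span.

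Two concrete problems with your path to $\bigcap_N\ker\psi_N=0$. First, your ``option 1'' does not close: an element of $\mathbf U(\mathfrak g[t])$ that annihilates every finite-dimensional \emph{simple} module need not be zero, since the finite-dimensional quotients of $\mathbf U(\mathfrak g[t])$ (e.g.\ the images acting on local Weyl modules) are very far from semisimple; residual finite-dimensionality, even if granted, does not follow from vanishing on simples alone. Moreover the Chari--Pressley classification of simples as tensor products of evaluation modules is a \emph{finite type} statement, while this lemma in the paper applies for a general simply-laced $\mathfrak g$ and feeds into Lemma~\ref{lem:nokill}, making this route at best a partial argument. Second, your ``option 2''---which is essentially the paper's proof---is stated only for PBW monomials $y_{\beta_1,r_1}\cdots y_{\beta_k,r_k}$ \emph{with distinct $r_j$}, and the claim that the coefficient of $\chi_1^{r_1}\cdots\chi_k^{r_k}$ is the pure tensor fails as soon as some $r_j=0$ (then $\chi_j^{0}=1$ and the ``coefficient'' sweeps in every placement of that Lie factor) or when $r_j$ repeat (the same monomial in the $\chi$'s is produced by several shuffle placements). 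A general PBW monomial has both phenomena, and ``choosing $\chi$-monomial indices adapted to a PBW basis'' does not by itself repair this. The paper avoids the problem by isolating not a coefficient in $\Bbbk[\chi]$ alone but the joint leading \emph{pair}
\[
\bigl(\,\chi_1^{b_1}\cdots\chi_n^{b_n},\; X_{a_1}\otimes\cdots\otimes X_{a_n}\otimes 1^{\otimes(N-n)}\,\bigr),
\]
recording simultaneously the ordered tuple of Lie factors (from the tensor) and of $t$-degrees (from which $\chi_j$ carries which power). This pair occurs with nonzero coefficient only in the image of the single PBW monomial $(X_{a_1}\otimes t^{b_1})\cdots(X_{a_n}\otimes t^{b_n})$, with no distinctness hypothesis on the $b_j$. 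If you incorporate that refinement into your option 2 and drop option 1, your argument becomes the paper's, modulo the superfluous kernel-chain scaffolding.
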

\begin{proof}
To show this result for generic $\chi_1,\dots, \chi_N$ is the same as
to show it for the universal evaluation over $\k[x_1,\dots, x_n]$.
  We can assume that each $u_i$ is a PBW monomial without loss of
  generality.  In this case, we let $N$ be the maximal length of one
  of these monomials.
That is, we consider $u_i=(X_{a_1}\otimes
   t^{b_1}) (X_{a_2}\otimes t^{b_2})\cdots (X_{a_n}\otimes t^{b_n})$
   where $\{X_1,\dots, X_d\}$ is a basis of $\g$.
   The $N$-fold coproduct $\Delta^{(N)}(u_i)$ is of the form \[(X_{a_1}\otimes
   t^{b_1}) \otimes (X_{a_2}\otimes t^{b_2})\otimes \cdots \otimes
   (X_{a_n}\otimes t^{b_n})\otimes 1\otimes \cdots \otimes 1+\cdots
   .\]  Note that this term does not appear in the coproduct of any other
   PBW monomial.

   Now, applying the evaluation, we have the form
\[z_i=x_1^{b_1} \cdots x_n^{b_n}X_{a_1}
   \otimes X_{a_2}\otimes \cdots \otimes
   X_{a_n}\otimes 1\otimes \cdots \otimes 1+\cdots\]
Again, this term will not show up in any other PBW monomial. This
shows the desired independence.
\end{proof}
The following lemma is presumably standard, but we include a short
proof for completeness.
\begin{lem}\label{lem:Weyl-nontrivial}
  For any element $u\in {\bf{U}}_{\k} (\mathfrak{g})$, there is a tensor
  product of two Weyl modules for $\g$ on which it acts non-trivially.
\end{lem}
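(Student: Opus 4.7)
The plan is to reduce the lemma to the well-known faithfulness of the action of $\mathbf{U}(\g)$ on the direct sum $\bigoplus_{\lambda\in X^+}V(\lambda)$ of all finite Weyl modules. The key observation is that the trivial representation $V(0)$ is itself a finite Weyl module, and $V(\lambda)\otimes V(0)\cong V(\lambda)$ as $\g$-modules. Hence it suffices to produce a single dominant $\lambda$ on which $u$ acts non-trivially; the pair $(V(\lambda),V(0))$ then provides the desired tensor product.

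To establish that faithfulness, I would expand $u\neq 0$ in PBW form relative to the triangular decomposition $\mathbf{U}(\g)=\mathbf{U}(\mathfrak{n}^-)\cdot\mathbf{U}(\mathfrak{h})\cdot\mathbf{U}(\mathfrak{n}^+)$, writing $u=\sum_I c_I F^-_I H_I X^+_I$ with distinct PBW monomials and $c_I\neq 0$. Pick $\lambda$ very dominant, meaning $\la i,\lambda\ra$ exceeds the $F_i$-degree of every $F^-_I$; then the Weyl relations $F_i^{(m_i+1)}v_\lambda=0$ do not truncate any monomial $F^-_I v_\lambda$. Evaluating $u$ on suitably chosen weight vectors of $V(\lambda)$—the highest weight vector $v_\lambda$ to detect the summands with $X^+_I=1$, together with lower-weight vectors to absorb the other $X^+_I$ contributions—then produces linearly independent vectors in $V(\lambda)$, forcing $uV(\lambda)\neq 0$.

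The main obstacle is the detailed bookkeeping required to rule out cancellations among PBW summands landing in the same weight space of $V(\lambda)$. This can be sidestepped entirely by invoking the classical embedding $\mathbf{U}(\g)\hookrightarrow \prod_{\lambda\in X^+}\mathrm{End}(V(\lambda))$ for $\g$ of finite type (in particular for type ADE), which holds over any field via the hyperalgebra/distribution algebra formalism. Once faithfulness on the sum of Weyl modules is in hand, the reduction in the first paragraph completes the proof.
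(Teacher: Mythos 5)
Your opening reduction---tensoring with the trivial Weyl module $V(0)$ so that a single Weyl module would suffice---is logically sound, but it discards exactly the structure that makes the lemma provable. The paper's statement involves \emph{two} Weyl modules for a reason: the proof acts on $v^+\otimes v^-\in V(\lambda_+)\otimes V(\lambda_-)$, where $v^+$ is a highest weight vector (annihilated by $\mathbf{U}^+(\g)$) and $v^-$ is a lowest weight vector (annihilated by $\mathbf{U}^-(\g)$). Writing $u=\sum u_i^0u_i^+u_i^-$ in PBW form and tracking the extremal weight contributions on each side, the two boundary vectors cleanly separate the $\mathbf{U}^+$ and $\mathbf{U}^-$ pieces of each monomial, and for $\lambda_\pm\gg 0$ one extracts a forbidden linear dependence among the $u_i^\pm$. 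Your first paragraph converts the problem into a harder one about a single Weyl module, where the highest weight vector already kills the $\mathbf{U}^+$ part and there is no analogous separating device.

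That harder statement is then not actually established. In your second paragraph, the cancellation bookkeeping you wave off as ``detailed'' is precisely the content: once you must probe vectors of lower weight than $v_\lambda$ (which you must, since $X_I^+v_\lambda=0$), the various PBW summands $F_I^-H_IX_I^+$ land in overlapping weight spaces and there is no obvious reason their contributions do not cancel. Your third paragraph sidesteps this by citing faithfulness of $\mathbf{U}(\g)$ on $\bigoplus_\lambda V(\lambda)$ ``over any field via the hyperalgebra/distribution algebra formalism.'' Over $\C$ this is classical, but over a field of positive characteristic it is not a one-line citation: the natural argument runs through faithfulness of $\mathrm{Dist}(G)$ on $\Bbbk[G]$, a good-filtration decomposition of $\Bbbk[G]$ into \emph{dual} Weyl modules, and a duality/antipode step to return to Weyl modules, none of which you supply. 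The paper explicitly says the lemma needs ``an argument that works in arbitrary characteristic'' and provides a self-contained one; your proof replaces it with an unjustified appeal exactly where the work happens. To repair this you would either need to carry out the positive-characteristic distribution-algebra argument in detail, or adopt the paper's highest/lowest weight vector trick in a genuine two-fold tensor product.
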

\begin{proof}
  This is a straightforward consequence of Peter-Weyl if $\k$ has
  characteristic 0.  However, let us give an argument that works in
  arbitrary characteristic.  By PBW, we can write $u=\sum
  u_i^0u^+_iu^-_i$ where $u_i^\pm\in \mathbf{U}^\pm _{\k} (\g)$.  Now, consider the
  action of $u$ on the tensor product of the highest and lowest weight
  vectors $v^+\otimes v^-\in V(\lambda_+)\otimes V(\lambda_-)$ for
some $\lambda_+$ and $\lambda_-$.  We can assume without loss of
generality that these elements are weight vectors, and that we have
used a minimal number of terms subject to this restriction.

Since all elements of $\mathbf{U}^\pm _{\k} (\g)$ kill $v_\pm$, we have that
\[u(  v^+\otimes v^-)=\sum u_i^0u^+_i u^-_i(v^+\otimes v^-)=\sum
u_i^0(u^-_iv^+\otimes u^+_iv^-+\cdots)\]
where the remaining terms have higher weight in the left term and
lower in the right term.

For any linearly independent subset $\{w_i^\pm\}$ of
$\mathbf{U}^\pm(\g)$, the set $\{w_i^\pm v^\pm\}$ is linearly
independent for $\lambda_\pm \gg 0$.  Thus, for  $\lambda_\pm \gg 0$, the
terms of minimal weight in the left term and maximal in the right term
give a linear combination $\sum
u_i^0(u^+_iv^+\otimes u^-_iv^-)=0$.  Since these are weight vectors,
we have obtained a linear dependence in the set $\{u_i^+u_i^-\}$; we
can use this to reduce the number of terms in the sum of $u$,
obtaining a contradiction to the assumption that we had taken the
minimal number possible.
\end{proof}

\begin{lem} \label{lem:nokill}
  No element of ${\bf{U}}_{\k}(\mathfrak{g}[t])\cong \curkg$ kills all global Weyl modules.
\end{lem}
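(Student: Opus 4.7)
The plan is to combine Lemma \ref{lem:coprod-ind}, the surjection from the preceding lemma onto a tensor product of evaluation modules, and an injectivity enhancement of Lemma \ref{lem:Weyl-nontrivial}. Let $u \in \bfU(\g[t])$ be nonzero. First I would apply Lemma \ref{lem:coprod-ind} to the singleton family $\{u\}$ to produce an integer $N$ and generic scalars $\chi_1,\ldots,\chi_N \in \k$ such that
\[
z \;:=\; (\operatorname{ev}_{\chi_1}\otimes\cdots\otimes\operatorname{ev}_{\chi_N})\circ\Delta^{(N)}(u)
\]
is nonzero in $\bfU(\g)^{\otimes N}$. By the very construction of evaluation modules and of the $N$-fold coproduct, $u$ acts on $V_{\chi_1}(\mu_1)\otimes\cdots\otimes V_{\chi_N}(\mu_N)$ precisely as the factor-wise operator $\sum_\alpha \rho_{\mu_1}(a_\alpha^{(1)})\otimes\cdots\otimes\rho_{\mu_N}(a_\alpha^{(N)})$, where $z=\sum_\alpha a_\alpha^{(1)}\otimes\cdots\otimes a_\alpha^{(N)}$ and $\rho_\mu\colon \bfU(\g)\to\operatorname{End}_\k V(\mu)$ denotes the action on the finite Weyl module $V(\mu)$.

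The main step is to find dominant weights $\mu_1,\ldots,\mu_N$ for which this factor-wise action of $z$ is nonzero. Since a nonzero element of a direct sum of tensor products is automatically nonzero on some single tensor product summand, induction on $N$ (peeling off the first tensor factor, using that if $\{z'_i\}\subset\bfU(\g)^{\otimes(N-1)}$ are linearly independent and one $a_i\neq 0$, then $\sum a_i\otimes z'_i \ne 0$ as soon as some $\rho_{\mu_1}(a_i)\ne 0$ and the operators representing the $z'_i$ remain linearly independent) reduces the problem to the base case $N=1$: for any nonzero $w\in\bfU(\g)$, some Weyl module $V(\mu)$ has $\rho_\mu(w)\ne 0$, equivalently $\bigcap_\mu\operatorname{Ann}_{\bfU(\g)}V(\mu)=0$. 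In characteristic zero this is Peter--Weyl; in arbitrary characteristic it should follow by adapting the PBW / extremal-weight analysis of the proof of Lemma \ref{lem:Weyl-nontrivial}, now applied to a single Weyl module $V(\mu)$ with $\mu$ sufficiently deep in the dominant chamber so that the highest- and lowest-weight components of a PBW monomial acting on a suitable weight vector are forced to be linearly independent.

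Once $\mu_1,\ldots,\mu_N$ are chosen, $u$ acts nontrivially on $V_{\chi_1}(\mu_1)\otimes\cdots\otimes V_{\chi_N}(\mu_N)$, and the preceding lemma supplies a surjective $\g[t]$-homomorphism $\mathbb W(\mu_1+\cdots+\mu_N)\twoheadrightarrow V_{\chi_1}(\mu_1)\otimes\cdots\otimes V_{\chi_N}(\mu_N)$, forcing $u$ to act nontrivially on the global Weyl module $\mathbb W(\mu_1+\cdots+\mu_N)$, as desired. The principal obstacle is precisely the $N=1$ upgrade of Lemma \ref{lem:Weyl-nontrivial} from ``nontrivial on $V(\mu_+)\otimes V(\mu_-)$'' to ``nontrivial on a single $V(\mu)$''; the remaining inductive reduction and the passage back to the global Weyl module are essentially formal bookkeeping.
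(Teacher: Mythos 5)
Your plan shares the same outer frame as the paper's proof (apply Lemma~\ref{lem:coprod-ind} to get a nonzero image in ${\bf U}(\g)^{\otimes N}$, act on a tensor product of evaluation modules, pull back along the surjection from a global Weyl module), but the middle step is genuinely different, and it is exactly where your argument has a gap that is not ``essentially formal bookkeeping.''

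You want one Weyl module $V(\mu_i)$ per tensor factor, so that the evaluation parameters are automatically distinct. This reduces to the claim $\bigcap_\mu\operatorname{Ann}_{{\bf U}(\g)}V(\mu)=0$, which you propose to get ``by adapting the PBW / extremal-weight analysis'' of Lemma~\ref{lem:Weyl-nontrivial}. But that lemma's argument is built specifically around the vector $v^+\otimes v^-\in V(\lambda_+)\otimes V(\lambda_-)$: the $u^-$ factors land on $v^+$ and the $u^+$ factors land on $v^-$, and the extremal-weight component of the result isolates $\sum u_i^0(u_i^- v^+\otimes u_i^+ v^-)$. On a single Weyl module the analogue collapses --- $u^+$ annihilates $v^+$ outright, and there is no extremal corner from which to read off independence of $u_i^+u_i^-v^+$. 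So the single-module statement is not an adaptation; it is a different (and in positive characteristic, nontrivial) claim that the paper deliberately avoids needing. Moreover, your induction on $N$ as written requires more than the base case you state: to peel off the first tensor factor from $z=\sum_i a_i\otimes z_i'$ with $z_i'$ linearly independent, you need a choice of $(\mu_2,\dots,\mu_N)$ on which the operators $\Phi(z_i')$ \emph{stay linearly independent}, not merely one on which some $z_i'$ is nonzero. That linear-independence-preservation statement is strictly stronger than your $N=1$ base case, and you would have to carry it through the induction.

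The paper sidesteps both difficulties by applying Lemma~\ref{lem:Weyl-nontrivial} \emph{as stated} but to the Lie algebra $\g^{\oplus N}$, whose Weyl modules are the outer tensor products $V(\lambda_1)\boxtimes\cdots\boxtimes V(\lambda_N)$; a tensor product of two of them yields $(V(\lambda_{i,1})\otimes V(\lambda_{i,2}))$ in each slot, in a single stroke with no induction. The cost is that this puts two evaluation modules with the \emph{same} parameter $\chi_i$ in each slot, so the surjection from the global Weyl module (which needs distinct scalars) does not apply directly. The paper then uses a genericity argument: nontriviality at the diagonal specialization $x_{i,1}=x_{i,2}=\chi_i$ implies nontriviality of the universal action over $\k[x_{*,*}]$, hence at a generic (in particular, pairwise-distinct) choice of parameters, after which the surjection does apply. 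If you want to keep your one-module-per-factor route, you must actually prove the single-module annihilator statement and the linear-independence-preservation upgrade; alternatively, you can follow the paper and replace both with one application of Lemma~\ref{lem:Weyl-nontrivial} to $\g^{\oplus N}$ plus the parameter-perturbation step.
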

\begin{proof}
We must show that no $u\in {\bf { U}}_{\k} (\g[t])$ can act trivially on all global Weyl modules.
  By Lemma~\ref{lem:coprod-ind}, for generic $\chi_i$, we have $v=(\operatorname{ev}_{\chi_1}\otimes
  \cdots\otimes \operatorname{ev}_{\chi_N})\Delta^{(N)}(u)\neq 0$. Such a
  set of $\chi_i$ must exist if $\k$ is infinite (and we can replace
  $\k$ with an infinite extension without changing the result).

  Thus, we have an algebra map ${\bf { U}}_{\k} (\g[t])\to {\bf { U}}_{\k} (\g)^{\otimes N}\cong
 {\bf { U}}_{\k} (\g^{\oplus N})$ which does not kill $u$.
Applying Lemma \ref{lem:Weyl-nontrivial} to ${\bf { U}}_{\k} (\g^{\oplus
  N})$, we have a tensor product
\[(V(\lambda_{1,1})\otimes \cdots \otimes V(\lambda_{1,k_1}))\boxtimes \cdots
\boxtimes (V(\lambda_{N,1})\otimes \cdots \otimes V(\lambda_{N,k_N}))\]
on which $v$ acts non-trivially.  Here the symbol  $\boxtimes$ is used for the outer tensor product, giving an action of $\g^{\oplus N}$ on the tensor product of $N$ modules over $\g$, and the standard symbol $\otimes$ refers to the internal tensor product in the category of  $\g$-modules.

 That is to say, $u$ acts
non-trivially on \[(V_{\chi_1}(\lambda_{1,1})\otimes \cdots \otimes V_{\chi_1}(\lambda_{1,k_1}))\otimes \cdots
\otimes (V_{\chi_N}(\lambda_{N,1})\otimes \cdots \otimes
V_{\chi_N}(\lambda_{N,k_N})).\]
Thus, necessarily, this shows that $u$ acts non-trivially on
$V_{x_{1,1}}(\lambda_{1,1})\otimes \cdots \otimes
V_{x_{N,k_N}}(\lambda_{N,k_N}),$ for formal parameters $x_{*,*}$, and
thus also when $x_{i,k}$ is replaced by a generic numerical parameter
$\chi_{i,k}\in\k$.

Thus $u$ must act
non-trivially on $V_{\chi_{1,1}}(\lambda_{1,1})\otimes \cdots \otimes
V_{\chi_{N,k_N}}(\lambda_{N,k_N})$.  Since this is a quotient of the global Weyl
module $\mathbb{W}(\lambda_{1,1}+\cdots +\lambda_{N,k_N})$, the action on this
module must be non-trivial as well.
\end{proof}

%

%
%

%
\section{Categorified quantum groups}
\label{categorified-quantum}
%
In this section we
briefly recall
the definition of the 2-category
$\Ucat_Q(\mf{g})$ and list some of its relations.
For the full definition of this 2-category
categorifying
 $\mathbf{U}(\mathfrak{g})$
  we refer to \cite{BHLW}. In \cite{BHLW} this 2-category is called
  $\Ucat^{cyc}_Q(\mf{g})$  and is shown to be isomorphic to
  the one defined earlier in \cite{CLau}, but to have  better
  duality properties. We remove here the index
  $\it{cyc}$ for simplicity.

  For an elementary introduction to the categorification of $\mf{sl}_2$ see \cite{Lau3}.

%
%

%
\subsection{2-category  $\Ucat_Q(\mf{g})$}
%

By a graded linear 2-category we mean a category enriched in graded linear categories, so that the hom spaces form graded linear categories, and the composition map is grading preserving.

Associated to a symmetric Cartan datum define a {\em choice of scalars $Q$} consisting of:
\begin{itemize}
  \item $\left\{ t_{ij}  \mid \text{ for all $i,j \in I$} \right\}$,
\end{itemize}
such that
\begin{itemize}
\item $t_{ii}=1$ for all $i \in I$ and $t_{ij} \in \Bbbk^{\times}$ for $i\neq j$,
 \item $t_{ij}=t_{ji}$ when $a_{ij}=0$.
\end{itemize}

Given a fixed choice of scalars $Q$ we define
the 2-category $\Ucat_Q(\mf{g})$ as a graded linear
2-category consisting of:
\begin{itemize}
\item objects $\lambda$ for $\lambda \in X$.
\item 1-morphisms are formal direct sums of (shifts of) compositions of
$$\onel, \quad \onenn{\l+\alpha_i} \sE_i = \onenn{\l+\alpha_i} \sE_i\onel = \sE_i \onel, \quad \text{ and }\quad \onenn{\lambda-\alpha_i} \sF_i = \onenn{\lambda-\alpha_i} \sF_i\onel = \sF_i\onel$$
for $i \in I$ and $\l \in X$.
In particular, any morphism can be written as a finite formal sum of symbols $\sE_{\ii}\onel\la t\ra$ where $\ii = (\pm i_1, \dots, \pm i_m)$ is a signed sequence of simple roots, $t$ is a grading shift,  $\sE_{+i}\onel := \sE_i\onel$ and $\sE_{-i}\onel:=\sF_i\onel$, and $\sE_{\ii}\onel\la t\ra := \sE_{\pm i_1} \dots \sE_{\pm i_m}\onel \la t\ra$.

\item 2-morphisms are $\Bbbk$-vector spaces spanned by compositions of (decorated) tangle-like diagrams illustrated below.

\begin{align}
  \xy 0;/r.17pc/:
 (0,7);(0,-7); **\dir{-} ?(.75)*\dir{>};
 (0,0)*{\bullet};
 (7,3)*{ \scs \lambda};
 (-9,3)*{\scs  \lambda+\alpha_i};
 (-2.5,-6)*{\scs i};
 (-10,0)*{};(10,0)*{};
 \endxy &\maps \cal{E}_i\onel \to \cal{E}_i\onel\la (\alpha_i,\alpha_i) \ra  & \quad
 &
    \xy 0;/r.17pc/:
 (0,7);(0,-7); **\dir{-} ?(.75)*\dir{<};
 (0,0)*{\bullet};
 (7,3)*{ \scs \lambda};
 (-9,3)*{\scs  \lambda-\alpha_i};
 (-2.5,-6)*{\scs i};
 (-10,0)*{};(10,0)*{};
 \endxy\maps \cal{F}_i\onel \to \cal{F}_i\onel\la (\alpha_i,\alpha_i) \ra  \nn \\
   & & & \nn \\
   \xy 0;/r.17pc/:
  (0,0)*{\xybox{
    (-4,-4)*{};(4,4)*{} **\crv{(-4,-1) & (4,1)}?(1)*\dir{>} ;
    (4,-4)*{};(-4,4)*{} **\crv{(4,-1) & (-4,1)}?(1)*\dir{>};
    (-5.5,-3)*{\scs i};
     (5.5,-3)*{\scs j};
     (9,1)*{\scs  \lambda};
     (-10,0)*{};(10,0)*{};
     }};
  \endxy \;\;&\maps \cal{E}_i\cal{E}_j\onel  \to \cal{E}_j\cal{E}_i\onel\la - (\alpha_i,\alpha_j) \ra  &
  &
   \xy 0;/r.17pc/:
  (0,0)*{\xybox{
    (-4,4)*{};(4,-4)*{} **\crv{(-4,1) & (4,-1)}?(1)*\dir{>} ;
    (4,4)*{};(-4,-4)*{} **\crv{(4,1) & (-4,-1)}?(1)*\dir{>};
    (-6.5,-3)*{\scs i};
     (6.5,-3)*{\scs j};
     (9,1)*{\scs  \lambda};
     (-10,0)*{};(10,0)*{};
     }};
  \endxy\;\; \maps \cal{F}_i\cal{F}_j\onel  \to \cal{F}_j\cal{F}_i\onel\la - (\alpha_i,\alpha_j) \ra  \nn \\
  & & & \nn \\
     \xy 0;/r.17pc/:
    (0,-3)*{\bbpef{i}};
    (8,-5)*{\scs  \lambda};
    (-10,0)*{};(10,0)*{};
    \endxy &\maps \onel  \to \cal{F}_i\cal{E}_i\onel\la 1 + (\l, \alpha_i) \ra   &
    &
   \xy 0;/r.17pc/:
    (0,-3)*{\bbpfe{i}};
    (8,-5)*{\scs \lambda};
    (-10,0)*{};(10,0)*{};
    \endxy \maps \onel  \to\cal{E}_i\cal{F}_i\onel\la 1 - (\l, \alpha_i) \ra  \nn \\
      & & & \nn \\
  \xy 0;/r.17pc/:
    (0,0)*{\bbcef{i}};
    (8,4)*{\scs  \lambda};
    (-10,0)*{};(10,0)*{};
    \endxy & \maps \cal{F}_i\cal{E}_i\onel \to\onel\la 1 + (\l, \alpha_i) \ra  &
    &
 \xy 0;/r.17pc/:
    (0,0)*{\bbcfe{i}};
    (8,4)*{\scs  \lambda};
    (-10,0)*{};(10,0)*{};
    \endxy \maps\cal{E}_i\cal{F}_i\onel  \to\onel\la 1 - (\l, \alpha_i) \ra \nn
\end{align}
\end{itemize}

The vertical composition of morphisms $ab$ is given by stacking $a$ on
top of $b$, and the horizontal composition $a\hor b$ is given by
placing $a$ to the left of $b$ in the plane.

For the whole list of relations on 2-morphisms
we refer to
\cite{BHLW}.  We are listing here only those which will be of importance later.

The 1-morphisms $\cal{E}_i\onel$ and $\cal{F}_i\onel$
are biadjoint. This defines a pivotal structure
 on the 2-category and can be expressed diagrammatically as follows:

\begin{equation}\label{eq_biadjoint2}
 \xy   0;/r.17pc/:
    (8,0)*{}="1";
    (0,0)*{}="2";
    (-8,0)*{}="3";
    (8,-10);"1" **\dir{-};
    "1";"2" **\crv{(8,8) & (0,8)} ?(0)*\dir{>} ?(1)*\dir{>};
    "2";"3" **\crv{(0,-8) & (-8,-8)}?(1)*\dir{>};
    "3"; (-8,10) **\dir{-};
    (12,9)*{\lambda};
    (-5,-9)*{\lambda+\alpha_i};
    \endxy
    \; =
    \;
    \xy 0;/r.17pc/:
    (8,0)*{}="1";
    (0,0)*{}="2";
    (-8,0)*{}="3";
    (0,-10);(0,10)**\dir{-} ?(.5)*\dir{>};
    (5,-8)*{\lambda};
    (-9,-8)*{\lambda+\alpha_i};
    \endxy
\qquad \quad \xy   0;/r.17pc/:
    (-8,0)*{}="1";
    (0,0)*{}="2";
    (8,0)*{}="3";
    (-8,-10);"1" **\dir{-};
    "1";"2" **\crv{(-8,8) & (0,8)} ?(0)*\dir{<} ?(1)*\dir{<};
    "2";"3" **\crv{(0,-8) & (8,-8)}?(1)*\dir{<};
    "3"; (8,10) **\dir{-};
    (12,-9)*{\lambda+\alpha_i};
    (-6,9)*{\lambda};
    \endxy
    \; =
    \;
\xy   0;/r.17pc/:
    (-8,0)*{}="1";
    (0,0)*{}="2";
    (8,0)*{}="3";
    (0,-10);(0,10)**\dir{-} ?(.5)*\dir{<};
   (9,8)*{\lambda+\alpha_i};
    (-6,8)*{\lambda};
    \endxy
\end{equation}

The dots and crossings are cyclic with respect to this biadjoint structure:
\begin{equation} \label{eq_cyclic_dot}
 \xy 0;/r.17pc/:
    (-8,5)*{}="1";
    (0,5)*{}="2";
    (0,-5)*{}="2'";
    (8,-5)*{}="3";
    (-8,-10);"1" **\dir{-};
    "2";"2'" **\dir{-} ?(.5)*\dir{<};
    "1";"2" **\crv{(-8,12) & (0,12)} ?(0)*\dir{<};
    "2'";"3" **\crv{(0,-12) & (8,-12)}?(1)*\dir{<};
    "3"; (8,10) **\dir{-};
    (17,-9)*{\lambda+\alpha_i};
    (-12,9)*{\lambda};
    (0,4)*{\bullet};
    (10,8)*{\scs };
    (-10,-8)*{\scs };
    \endxy
    \quad = \quad
      \xy 0;/r.17pc/:
 (0,10);(0,-10); **\dir{-} ?(.75)*\dir{<}+(2.3,0)*{\scriptstyle{}}
 ?(.1)*\dir{ }+(2,0)*{\scs };
 (0,0)*{\bullet};
 (-6,5)*{\lambda};
 (10,5)*{\lambda+\alpha_i};
 (-10,0)*{};(10,0)*{};(-2,-8)*{\scs };
 \endxy
    \quad = \quad
   \xy 0;/r.17pc/:
    (8,5)*{}="1";
    (0,5)*{}="2";
    (0,-5)*{}="2'";
    (-8,-5)*{}="3";
    (8,-10);"1" **\dir{-};
    "2";"2'" **\dir{-} ?(.5)*\dir{<};
    "1";"2" **\crv{(8,12) & (0,12)} ?(0)*\dir{<};
    "2'";"3" **\crv{(0,-12) & (-8,-12)}?(1)*\dir{<};
    "3"; (-8,10) **\dir{-};
    (17,9)*{\lambda+\alpha_i};
    (-12,-9)*{\lambda};
    (0,4)*{\bullet};
    (-10,8)*{\scs };
    (10,-8)*{\scs };
    \endxy
\end{equation}
\begin{equation} \label{eq_almost_cyclic}
   \xy 0;/r.17pc/:
  (0,0)*{\xybox{
    (-4,4)*{};(4,-4)*{} **\crv{(-4,1) & (4,-1)}?(1)*\dir{>} ;
    (4,4)*{};(-4,-4)*{} **\crv{(4,1) & (-4,-1)}?(1)*\dir{>};
    (-6.5,-3)*{\scs i};
     (6.5,-3)*{\scs j};
     (9,1)*{\scs  \lambda};
     (-10,0)*{};(10,0)*{};
     }};
  \endxy \quad = \quad
  \xy 0;/r.17pc/:
  (0,0)*{\xybox{
    (4,-4)*{};(-4,4)*{} **\crv{(4,-1) & (-4,1)}?(1)*\dir{>};
    (-4,-4)*{};(4,4)*{} **\crv{(-4,-1) & (4,1)};
     (-4,4)*{};(18,4)*{} **\crv{(-4,16) & (18,16)} ?(1)*\dir{>};
     (4,-4)*{};(-18,-4)*{} **\crv{(4,-16) & (-18,-16)} ?(1)*\dir{<}?(0)*\dir{<};
     (-18,-4);(-18,12) **\dir{-};(-12,-4);(-12,12) **\dir{-};
     (18,4);(18,-12) **\dir{-};(12,4);(12,-12) **\dir{-};
     (22,1)*{ \lambda};
     (-10,0)*{};(10,0)*{};
     (-4,-4)*{};(-12,-4)*{} **\crv{(-4,-10) & (-12,-10)}?(1)*\dir{<}?(0)*\dir{<};
      (4,4)*{};(12,4)*{} **\crv{(4,10) & (12,10)}?(1)*\dir{>}?(0)*\dir{>};
      (-20,11)*{\scs j};(-10,11)*{\scs i};
      (20,-11)*{\scs j};(10,-11)*{\scs i};
     }};
  \endxy
\quad =  \quad
\xy 0;/r.17pc/:
  (0,0)*{\xybox{
    (-4,-4)*{};(4,4)*{} **\crv{(-4,-1) & (4,1)}?(1)*\dir{>};
    (4,-4)*{};(-4,4)*{} **\crv{(4,-1) & (-4,1)};
     (4,4)*{};(-18,4)*{} **\crv{(4,16) & (-18,16)} ?(1)*\dir{>};
     (-4,-4)*{};(18,-4)*{} **\crv{(-4,-16) & (18,-16)} ?(1)*\dir{<}?(0)*\dir{<};
     (18,-4);(18,12) **\dir{-};(12,-4);(12,12) **\dir{-};
     (-18,4);(-18,-12) **\dir{-};(-12,4);(-12,-12) **\dir{-};
     (22,1)*{ \lambda};
     (-10,0)*{};(10,0)*{};
      (4,-4)*{};(12,-4)*{} **\crv{(4,-10) & (12,-10)}?(1)*\dir{<}?(0)*\dir{<};
      (-4,4)*{};(-12,4)*{} **\crv{(-4,10) & (-12,10)}?(1)*\dir{>}?(0)*\dir{>};
      (20,11)*{\scs i};(10,11)*{\scs j};
      (-20,-11)*{\scs i};(-10,-11)*{\scs j};
     }};
  \endxy
\end{equation}

The relations below imply the usual
$\mathfrak{sl}_2$ commutator relations
between $E$ and $F$ on the level of the Grothendieck group.
\begin{equation} \label{eq_EF}
 \vcenter{\xy 0;/r.17pc/:
  (-8,0)*{};
  (8,0)*{};
  (-4,10)*{}="t1";
  (4,10)*{}="t2";
  (-4,-10)*{}="b1";
  (4,-10)*{}="b2";(-6,-8)*{\scs i};(6,-8)*{\scs i};
  "t1";"b1" **\dir{-} ?(.5)*\dir{<};
  "t2";"b2" **\dir{-} ?(.5)*\dir{>};
  (10,2)*{\l};
  \endxy}
\;\; = \;\; -\;\;
 \vcenter{   \xy 0;/r.17pc/:
    (-4,-4)*{};(4,4)*{} **\crv{(-4,-1) & (4,1)}?(1)*\dir{>};
    (4,-4)*{};(-4,4)*{} **\crv{(4,-1) & (-4,1)}?(1)*\dir{<};?(0)*\dir{<};
    (-4,4)*{};(4,12)*{} **\crv{(-4,7) & (4,9)};
    (4,4)*{};(-4,12)*{} **\crv{(4,7) & (-4,9)}?(1)*\dir{>};
  (8,8)*{\l};
     (-6,-3)*{\scs i};
     (6.5,-3)*{\scs i};
 \endxy}
  \;\; + \;\;
   \sum_{ \xy  (0,3)*{\scs f_1+f_2+f_3}; (0,0)*{\scs =\la i,\lambda\ra-1};\endxy}
    \vcenter{\xy 0;/r.17pc/:
    (-12,10)*{\l};
    (-8,0)*{};
  (8,0)*{};
  (-4,-15)*{}="b1";
  (4,-15)*{}="b2";
  "b2";"b1" **\crv{(5,-8) & (-5,-8)}; ?(.05)*\dir{<} ?(.93)*\dir{<}
  ?(.8)*\dir{}+(0,-.1)*{\bullet}+(-3,2)*{\scs f_3};
  (-4,15)*{}="t1";
  (4,15)*{}="t2";
  "t2";"t1" **\crv{(5,8) & (-5,8)}; ?(.15)*\dir{>} ?(.95)*\dir{>}
  ?(.4)*\dir{}+(0,-.2)*{\bullet}+(3,-2)*{\scs \; f_1};
  (0,0)*{\iccbub{\scs \quad -\la i,\lambda\ra-1+f_2}{i}};
  (7,-13)*{\scs i};
  (-7,13)*{\scs i};
  \endxy}
\end{equation}
\begin{equation}\label{eq_FE}
 \vcenter{\xy 0;/r.17pc/:
  (-8,0)*{};(-6,-8)*{\scs i};(6,-8)*{\scs i};
  (8,0)*{};
  (-4,10)*{}="t1";
  (4,10)*{}="t2";
  (-4,-10)*{}="b1";
  (4,-10)*{}="b2";
  "t1";"b1" **\dir{-} ?(.5)*\dir{>};
  "t2";"b2" **\dir{-} ?(.5)*\dir{<};
  (10,2)*{\l};
  (-10,2)*{\l};
  \endxy}
\;\; = \;\;
  -\;\;\vcenter{\xy 0;/r.17pc/:
    (-4,-4)*{};(4,4)*{} **\crv{(-4,-1) & (4,1)}?(1)*\dir{<};?(0)*\dir{<};
    (4,-4)*{};(-4,4)*{} **\crv{(4,-1) & (-4,1)}?(1)*\dir{>};
    (-4,4)*{};(4,12)*{} **\crv{(-4,7) & (4,9)}?(1)*\dir{>};
    (4,4)*{};(-4,12)*{} **\crv{(4,7) & (-4,9)};
  (8,8)*{\l};(-6.5,-3)*{\scs i};  (6,-3)*{\scs i};
 \endxy}
  \;\; + \;\;
    \sum_{ \xy  (0,3)*{\scs g_1+g_2+g_3}; (0,0)*{\scs =-\la i,\lambda\ra-1};\endxy}
    \vcenter{\xy 0;/r.17pc/:
    (-8,0)*{};
  (8,0)*{};
  (-4,-15)*{}="b1";
  (4,-15)*{}="b2";
  "b2";"b1" **\crv{(5,-8) & (-5,-8)}; ?(.1)*\dir{>} ?(.95)*\dir{>}
  ?(.8)*\dir{}+(0,-.1)*{\bullet}+(-3,2)*{\scs g_3};
  (-4,15)*{}="t1";
  (4,15)*{}="t2";
  "t2";"t1" **\crv{(5,8) & (-5,8)}; ?(.15)*\dir{<} ?(.9)*\dir{<}
  ?(.4)*\dir{}+(0,-.2)*{\bullet}+(3,-2)*{\scs g_1};
  (0,0)*{\icbub{\scs \quad\; \la i,\lambda\ra-1 + g_2}{i}};
    (7,-13)*{\scs i};
  (-7,13)*{\scs i};
  (-10,10)*{\l};
  \endxy}
\end{equation}

For $i \neq j$, we have
\begin{equation}
 \vcenter{\xy 0;/r.17pc/:
    (-4,-4)*{};(4,4)*{} **\crv{(-4,-1) & (4,1)}?(1)*\dir{};
    (4,-4)*{};(-4,4)*{} **\crv{(4,-1) & (-4,1)}?(1)*\dir{};
    (-4,4)*{};(4,12)*{} **\crv{(-4,7) & (4,9)}?(1)*\dir{};
    (4,4)*{};(-4,12)*{} **\crv{(4,7) & (-4,9)}?(1)*\dir{};
    (8,8)*{\lambda};
    (4,12); (4,13) **\dir{-}?(1)*\dir{>};
    (-4,12); (-4,13) **\dir{-}?(1)*\dir{>};
  (-5.5,-3)*{\scs i};
     (5.5,-3)*{\scs j};
 \endxy}
 \qquad = \qquad
 \left\{
 \begin{array}{ccc}
 0 & & \text{if $(\alpha_i,\alpha_j)=2$, } \\ \\
     t_{ij}\;\xy 0;/r.17pc/:
  (3,9);(3,-9) **\dir{-}?(0)*\dir{<}+(2.3,0)*{};
  (-3,9);(-3,-9) **\dir{-}?(0)*\dir{<}+(2.3,0)*{};
  (-5,-6)*{\scs i};     (5.1,-6)*{\scs j};
 \endxy &  &  \text{if $(\alpha_i, \alpha_j)=0$,}\\ \\
 t_{ij} \vcenter{\xy 0;/r.17pc/:
  (3,9);(3,-9) **\dir{-}?(0)*\dir{<}+(2.3,0)*{};
  (-3,9);(-3,-9) **\dir{-}?(0)*\dir{<}+(2.3,0)*{};
  (-3,4)*{\bullet};(-6.5,5)*{};
  (-5,-6)*{\scs i};     (5.1,-6)*{\scs j};
 \endxy} \;\; + \;\; t_{ji}
  \vcenter{\xy 0;/r.17pc/:
  (3,9);(3,-9) **\dir{-}?(0)*\dir{<}+(2.3,0)*{};
  (-3,9);(-3,-9) **\dir{-}?(0)*\dir{<}+(2.3,0)*{};
  (3,4)*{\bullet};(7,5)*{};
  (-5,-6)*{\scs i};     (5.1,-6)*{\scs j};
 \endxy}
   &  & \text{if $(\alpha_i, \alpha_j) =-1$,}
 \end{array}
 \right. \label{eq_r2_ij-gen}
\end{equation}

 For $i \neq j$ the dot sliding relations look as follows:
\begin{equation} \label{eq_dot_slide_ij-gen}
\xy 0;/r.18pc/:
  (0,0)*{\xybox{
    (-4,-4)*{};(4,6)*{} **\crv{(-4,-1) & (4,1)}?(1)*\dir{>}?(.75)*{\bullet};
    (4,-4)*{};(-4,6)*{} **\crv{(4,-1) & (-4,1)}?(1)*\dir{>};
    (-5,-3)*{\scs i};
     (5.1,-3)*{\scs j};
     (-10,0)*{};(10,0)*{};
     }};
  \endxy
 \;\; =
\xy 0;/r.18pc/:
  (0,0)*{\xybox{
    (-4,-4)*{};(4,6)*{} **\crv{(-4,-1) & (4,1)}?(1)*\dir{>}?(.25)*{\bullet};
    (4,-4)*{};(-4,6)*{} **\crv{(4,-1) & (-4,1)}?(1)*\dir{>};
    (-5,-3)*{\scs i};
     (5.1,-3)*{\scs j};
     (-10,0)*{};(10,0)*{};
     }};
  \endxy
\qquad  \xy 0;/r.18pc/:
  (0,0)*{\xybox{
    (-4,-4)*{};(4,6)*{} **\crv{(-4,-1) & (4,1)}?(1)*\dir{>};
    (4,-4)*{};(-4,6)*{} **\crv{(4,-1) & (-4,1)}?(1)*\dir{>}?(.75)*{\bullet};
    (-5,-3)*{\scs i};
     (5.1,-3)*{\scs j};
     (-10,0)*{};(10,0)*{};
     }};
  \endxy
\;\;  =
  \xy 0;/r.18pc/:
  (0,0)*{\xybox{
    (-4,-4)*{};(4,6)*{} **\crv{(-4,-1) & (4,1)}?(1)*\dir{>} ;
    (4,-4)*{};(-4,6)*{} **\crv{(4,-1) & (-4,1)}?(1)*\dir{>}?(.25)*{\bullet};
    (-5,-3)*{\scs i};
     (5.1,-3)*{\scs j};
     (-10,0)*{};(12,0)*{};
     }};
  \endxy
\end{equation}
hold.

 When $i \ne j$ one has the mixed relations  relating $\cal{E}_i \cal{F}_j$ and $\cal{F}_j \cal{E}_i$:
\begin{equation} \label{mixed_rel}
 \vcenter{   \xy 0;/r.18pc/:
    (-4,-4)*{};(4,4)*{} **\crv{(-4,-1) & (4,1)}?(1)*\dir{>};
    (4,-4)*{};(-4,4)*{} **\crv{(4,-1) & (-4,1)}?(1)*\dir{<};?(0)*\dir{<};
    (-4,4)*{};(4,12)*{} **\crv{(-4,7) & (4,9)};
    (4,4)*{};(-4,12)*{} **\crv{(4,7) & (-4,9)}?(1)*\dir{>};
  (8,8)*{\lambda};(-6,-3)*{\scs i};
     (6,-3)*{\scs j};
 \endxy}
 \;\; = \;\; \;\;
\xy 0;/r.18pc/:
  (3,9);(3,-9) **\dir{-}?(.55)*\dir{>}+(2.3,0)*{};
  (-3,9);(-3,-9) **\dir{-}?(.5)*\dir{<}+(2.3,0)*{};
  (8,2)*{\lambda};(-5,-6)*{\scs i};     (5.1,-6)*{\scs j};
 \endxy
\qquad \quad
    \vcenter{\xy 0;/r.18pc/:
    (-4,-4)*{};(4,4)*{} **\crv{(-4,-1) & (4,1)}?(1)*\dir{<};?(0)*\dir{<};
    (4,-4)*{};(-4,4)*{} **\crv{(4,-1) & (-4,1)}?(1)*\dir{>};
    (-4,4)*{};(4,12)*{} **\crv{(-4,7) & (4,9)}?(1)*\dir{>};
    (4,4)*{};(-4,12)*{} **\crv{(4,7) & (-4,9)};
  (8,8)*{\lambda};(-6,-3)*{\scs i};
     (6,-3)*{\scs j};
 \endxy}
 \;\;=\;\; \;\;
\xy 0;/r.18pc/:
  (3,9);(3,-9) **\dir{-}?(.5)*\dir{<}+(2.3,0)*{};
  (-3,9);(-3,-9) **\dir{-}?(.55)*\dir{>}+(2.3,0)*{};
  (8,2)*{\lambda};(-5,-6)*{\scs i};     (5.1,-6)*{\scs j};
 \endxy
\end{equation}



The curl can be expressed as a linear combination of bubbles:

\begin{equation} \label{eq_curl}
  \xy 0;/r.17pc/:
  (14,8)*{\l};
  (-3,-10)*{};(3,5)*{} **\crv{(-3,-2) & (2,1)}?(1)*\dir{>};?(.15)*\dir{>};
    (3,-5)*{};(-3,10)*{} **\crv{(2,-1) & (-3,2)}?(.85)*\dir{>} ?(.1)*\dir{>};
  (3,5)*{}="t1";  (9,5)*{}="t2";
  (3,-5)*{}="t1'";  (9,-5)*{}="t2'";
   "t1";"t2" **\crv{(4,8) & (9, 8)};
   "t1'";"t2'" **\crv{(4,-8) & (9, -8)};
   "t2'";"t2" **\crv{(10,0)} ;
   (-6,-8)*{\scs i};
 \endxy \;\; = \;\; -
   \sum_{ \xy  (0,3)*{\scs f_1+f_2+f_3}; (0,0)*{\scs =-\la i,\l\ra};\endxy}
 \xy 0;/r.17pc/:
  (19,4)*{\l};
  (0,0)*{\bbe{}};(-2,-8)*{\scs };
  (-2,-8)*{\scs i};
  (12,-2)*{\icbub{\la i,\l\ra-1+f_2}{i}};
  (0,6)*{\bullet}+(3,-1)*{\scs f_1};
 \endxy
\qquad
  \xy 0;/r.17pc/:
  (-14,8)*{\l};
  (3,-10)*{};(-3,5)*{} **\crv{(3,-2) & (-2,1)}?(1)*\dir{>};?(.15)*\dir{>};
    (-3,-5)*{};(3,10)*{} **\crv{(-2,-1) & (3,2)}?(.85)*\dir{>} ?(.1)*\dir{>};
  (-3,5)*{}="t1";  (-9,5)*{}="t2";
  (-3,-5)*{}="t1'";  (-9,-5)*{}="t2'";
   "t1";"t2" **\crv{(-4,8) & (-9, 8)};
   "t1'";"t2'" **\crv{(-4,-8) & (-9, -8)};
   "t2'";"t2" **\crv{(-10,0)} ;
   (6,-8)*{\scs i};
 \endxy \;\; = \;\;
 \sum_{ \xy  (0,3)*{\scs g_1+g_2+g_3}; (0,0)*{\scs =\la i,\l\ra};\endxy}
  \xy 0;/r.17pc/:
  (5,-8)*{\scs i};
  (-12,8)*{\l};
  (3,0)*{\bbe{}};(2,-8)*{\scs};
  (-12,-2)*{\iccbub{-\la i,\l\ra-1+g_2}{i}};
  (0,6)*{\bullet}+(3,-1)*{\scs g_1};
 \endxy
\end{equation}

%
\subsection{Bubbles and symmetric functions}
It is well known that for a partition $\lambda = (\lambda_1, \dots, \lambda_n)$ with $\lambda_1 \geq \lambda_2 \geq \dots \geq \lambda_n$, products of elementary symmetric functions $e_{\lambda} = e_{\lambda_1} \dots e_{\lambda_n}$ form a $\Z$-basis for
symmetric functions
${\rm Sym}$, see for example \cite{McD}.  Likewise, products of complete symmetric functions also provide a $\Z$-basis for $\sym$.  This mirrors the fact that any closed diagram in the graphical calculus for $\Ucat_Q(\mf{sl}_2)$ can be reduced to a product on non-nested bubbles of a given orientation.

In the calculus of the 2-category $\Ucat_Q(\mf{g})$,  we have the isomorphism
$$ \psi_{\lambda} \maps \;\; \prod_{i\in I}{\rm Sym}  \;\; \longrightarrow\;\; Z(\l)= \Ucat_Q(\mf{g})(\onel,\onel) $$
 since any closed diagram can still be reduced to products of
 non-nested closed bubbles labelled by $i \in I$, and
 \cite[Th. A]{Webunfurl} shows the source and image have the same
 graded dimension.

In what follows, it will be interesting to consider which products of closed diagrams correspond to the $\Q$-basis of $\sym$ given by the power sum $p_r$ symmetric functions (see e.g. p.16 in \cite{McD}).
Using a formula that expresses power sum symmetric functions in terms of products of complete and elementary symmetric functions, we can denote by $p_{i,r}(\lambda)$ for $r>0$, the image of the power sum symmetric polynomial on $i$-labelled strands in $Z(\l)$:
\begin{equation} \label{eq_defpil}
 p_{i,r}(\lambda) := \sum_{a+b=r} (a+1)
 \xy
   (0,-2)*{\icbub{\spadesuit+a}{i}};
   (12,-2)*{\iccbub{\spadesuit+b}{i}};
  (8,8)*{\lambda};
 \endxy
 =  - \sum_{a+b=r} (b+1)
 \xy
   (0,-2)*{\icbub{\spadesuit+a}{i}};
   (12,-2)*{\iccbub{\spadesuit+b}{i}};
  (8,8)*{\lambda};
 \endxy =  -\sum_{a+b=r} a
 \xy
   (0,-2)*{\icbub{\spadesuit+b}{i}};
   (12,-2)*{\iccbub{\spadesuit+a}{i}};
  (8,8)*{\lambda};
 \endxy
\end{equation}
For later convenience we set $p_{i,0}(\l) = \la i,\lambda\ra$.


The bubble sliding equations imply the following power sum slide rule
\begin{eqnarray}
    \label{eq_powerslide2}
   \xy 0;/r.18pc/:
  (10,6)*{\lambda};
  (0,0)*{\bbe{}};
  (2,-7)*{\scs j};
  (-14,-2)*{p_{i,r}(\lambda+\alpha_j)};
  (0,6)*{ }+(7,-1)*{\scs  };
 \endxy
 & = &
 \begin{cases}
\xy 0;/r.18pc/:
  (14,6)*{\lambda};
  (0,0)*{\bbe{}};
  (0,-12)*{\scs j};
  (10,-2)*{p_{i,r}(\lambda)};
  (0,6)*{ }+(7,-1)*{\scs  };
 \endxy
 \;\; + \;\; 2\;\;\xy 0;/r.18pc/:
  (10,6)*{\lambda};
  (0,0)*{\bbe{}};
  (2,-7)*{\scs j};
  (0,2)*{\bullet}+(3,1)*{r};
  (0,6)*{ }+(7,-1)*{\scs  };
 \endxy & \mbox{if } i = j, \\ & \\
\xy 0;/r.18pc/:
  (14,6)*{\lambda};
  (0,0)*{\bbe{}};
  (0,-12)*{\scs j};
  (10,-2)*{p_{i,r}(\lambda)};
  (0,6)*{ }+(7,-1)*{\scs  };
 \endxy
 \;\; -  \;\; (-v_{ij})^r\;\;\xy 0;/r.18pc/:
  (10,6)*{\lambda};
  (0,0)*{\bbe{}};
  (2,-7)*{\scs j};
  (0,2)*{\bullet}+(3,1)*{r};
  (0,6)*{ }+(7,-1)*{\scs  };
 \endxy
 & \mbox{if } a_{ij} = -1
 \end{cases}
\end{eqnarray}

%
\subsection{The $2$-category $\Ucat^*_Q(\mf{g})$}
\label{sec:Ustar}
%

The $2$-category $\Ucat^{\ast}:=\Ucat^*_Q(\mf{g})$ is defined as follows.  The objects and
$1$-morphisms are the same as those of $\U_Q(\mf{g})$.  Given a pair
of $1$-morphisms $f,g\col n\to m$, the abelian group $\U^*(n,m)(f,g)$
is defined by
\begin{gather*}
  \U^*(n,m)(f,g):= \bigoplus_{t\in\Z}\U(n,m)(f,g\la t\ra).
\end{gather*}
The category $\U^*(n,m)$ is additive and enriched over $\Z$-graded
abelian groups.  Alternatively, the linear category $\U^*(n,m)$ is
obtained from $\U(n,m)$ by adding a family of natural isomorphisms
$f\rightarrow f\la1\ra$ for each object $f$ of the category $\U(n,m)$.

In $\U^*(n,m)$ an object $f$ and its translation $f\la t\ra$
are isomorphic via the $2$-isomorphism
\begin{gather*}
  1_{f}\in \U(n,m)(f,f\la 0\ra)=\U(n,m)(f,(f\la t\ra) \la -t \ra)\subset\U^*(n,m)(f,f\la t\ra).
\end{gather*}
The inverse of the isomorphism $1_f \maps f \to f \la t \ra$ is given by
\begin{gather*}
  1_{f}\la t \ra \in \U(n,m)(f\la t \ra,f\la t\ra)=\U(n,m)(f\la t\ra,(f\la 0\ra) \la t \ra)\subset\U^*(n,m)(f\la t\ra ,f).
\end{gather*}
These isomorphisms $f \cong f\la t\ra$ make the Grothendieck group
$K_0(\U^{\ast})$ show that $[f]_{\cong}=[f\la t \ra ]_{\cong}$ in
$\U^{\ast}$.  Thus, the $\Z[q,q^{-1}]$-module structure
satisfies the equation $q=1$, and thus factors through the quotient
$\Z$, so it is more natural to only consider it as an abelian group.

The horizontal composition in $\U$ induces horizontal composition in
$\U^*$.  It follows that the $\U^*(n,m)$, $n,m\in\Z$, form an additive
$2$-category.

The Karoubi envelope $\Kar(\U^*)$ will be denoted by $\dot{\Ucat}^*$, which
is equivalent as an additive $2$-category to the additive $2$-category
obtained from $\dot{\Ucat}$ by defining
\begin{gather*}
  \dot{\Ucat}^*(n,m)(f,g) = \bigoplus_{t\in\Z}\dot{\Ucat}(n,m)(f,g\la t\ra).
\end{gather*}

%
\section{A homomorphism from the current algebra}
\label{current-algebra}
%

In type $A$ a homomorphism from the current algebra to the trace of the 2-category $\U^*$
was constructed in \cite{BHLZ} and \cite{BGHL}. For $\fsl_2$ is it proven to be
an isomorphism for the integral version of $\U^*$ in  \cite{BHLZ}.

 The image
$\mathsf{E}_{i,r}^{(a)}$ of the divided power of the current algebra generator is
given by  $[y_1^r\cdots y_a^re_a]$, which is the
$r$th power of a dot on each of $a$ consecutive strands, multiplied by
a primitive idempotent in the nilHecke algebra (see Proposition 9.7 and Corollary 9.8 in \cite{BHLZ}).
Since any two primitive idempotents are
equal in the trace, and the identity is the sum of $a!$ many such
idempotents, this indeed satisfies $\mathsf{E}_{i,r}^{a}=(a!) \mathsf{E}_{i,r}^{(a)}$.

The relevant parameters that govern the behavior of the 2-category
$\Ucat_Q(\mf{g})$ are the products $v_{ij}=t_{ij}^{-1}t_{ji}$ taken
over all pairs $i,j\in I$.
We will
assume throughout in this paper that we have chosen these scalars so
that $v_{ij}=(-1)^{a_{ij}}$.  For simply laced
Cartan datum,
this
means that $v_{ij}=-1$ whenever $i$ and $j$
are connected by an edge, and $v_{ij}=1$ if they are not.  The
most natural method for making such a choice is to orient the Dynkin
diagram, and set $t_{ij}=-t_{ji}=1$ if there is an oriented edge $j\to
i$.
These assumptions
will prove necessary for the result below.
\begin{prop}\label{prop-cur}
Let \(\mathsf{E}_{i,r}1_{\lambda}, \mathsf{F}_{j,s}1_{\lambda}, \mathsf{H}_{i,r}1_{\lambda}\) denote
the elements of \(\Tr(\U^\ast_Q(\mf{g}))\):
\[\mathsf{E}_{i,r}1_{\lambda}:=  \left[
\xy 0;/r.18pc/:
  (10,6)*{\lambda};
  (0,0)*{\bbe{}};
  (0,2)*{\bullet}+(3,1)*{r};
  (-8,6)*{ }; (12,-7)*{}; (2,-7)*{\scs i};
 \endxy
\right],  \quad \quad
\mathsf{F}_{j,s}1_{\lambda}:=  \left[
\xy 0;/r.18pc/:
  (10,6)*{\lambda};
  (0,0)*{\bbf{}};
  (2,-7)*{\scs j};
  (0,4)*{\bullet}+(3,1)*{s};
   (-8,6)*{ }; (12,-7)*{};
 \endxy
\right],\quad \quad
  \mathsf{H}_{i,r}1_{\lambda}:= \left[
 p_{i,r}(\lambda)\; \Id_{\onel}
\right],\]
where $p_{i,r}(\l)$ was defined in equation~\eqref{eq_defpil}.
There is a linear functor
\begin{equation} \label{eq_sln-homo}
 \rho \maps \dcurkg\longrightarrow \Tr(\U^\ast_Q(\mf{g})),
\end{equation}
given by
\begin{equation} \label{homomorp}
(x^{+}_{i, r})^{(a)} 1_{\lambda} \mapsto
\mathsf{E}_{i,r}^{(a)} 1_{\lambda}
 ,\quad \quad
(x^{-}_{j, s}) ^{(a)} 1_{\lambda}\mapsto
\mathsf{F}_{j,s}^{(a)} 1_{\lambda}
,\quad \quad
\xi_{i, r} 1_{\lambda}\mapsto
\mathsf{H}_{i,r}1_{\lambda}.
\end{equation}
\end{prop}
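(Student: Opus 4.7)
The plan is to define $\rho$ on objects by $\lambda\mapsto\lambda$ and on generators by the three displayed formulas, then extend multiplicatively (using the PBW-type triangular decomposition of $\dot{\bf U}(\g[t])$), and finally verify each of the defining relations (\ref{C1})--(\ref{C6b}) holds in the trace $\Tr(\U^*_Q(\g))$. The sole nontrivial issue is well-definedness; everything else is formal once that is established.

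Relations (\ref{C1})--(\ref{C3}) should follow directly from bubble calculus. Indeed, $\mathsf{H}_{i,r}1_\lambda$ lives in the commutative algebra $Z(\lambda)\cong\prod_{i\in I}\sym$, giving (\ref{C1}) immediately. For (\ref{C2}) and (\ref{C3}), I would apply the power sum slide formula \eqref{eq_powerslide2} twice: once for an upward strand and once (by the cyclic biadjoint structure \eqref{eq_biadjoint1}--\eqref{eq_cyclic_dot}) for a downward strand. The difference between sliding a $p_{i,r}$-bubble past a $j$-colored strand from the left versus the right picks up exactly $\pm a_{ij}$ times an $r$-fold dot, which is precisely $\pm a_{ij}\mathsf{E}_{j,r+s}^\pm$ once a dot of weight $s$ is already present; here it is essential that $v_{ij}=(-1)^{a_{ij}}$ so that the sign $(-v_{ij})^r$ in \eqref{eq_powerslide2} becomes $+1$. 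Relation (\ref{C4}) reduces to the nilHecke/KLR dot-slide identities \eqref{eq_dot_slide_ii-gen} and \eqref{eq_dot_slide_ij-gen}: the commutator $[\mathsf{E}_{i,r+1},\mathsf{E}_{j,s}]-[\mathsf{E}_{i,r},\mathsf{E}_{j,s+1}]$ becomes the trace class of a difference of two dotted crossings, and in the trace the remaining crossing-only terms cancel by cyclicity.

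The main obstacle is relation (\ref{C5}). Here one must compute $[\mathsf{E}_{i,r}\mathsf{F}_{j,s}-\mathsf{F}_{j,s}\mathsf{E}_{i,r}]$ as a trace class. For $i\neq j$ the mixed relations \eqref{mixed_rel} turn both $\cal{E}_i\cal{F}_j$ and $\cal{F}_j\cal{E}_i$ into $t_{ji}$- resp.\ $t_{ij}$-scaled pairs of vertical strands; after closing the trace and using cyclicity of horizontal composition, the two contributions cancel. For $i=j$ one applies the extended $\mathfrak{sl}_2$ relations \eqref{eq_EF} and \eqref{eq_FE}. When one then closes the resulting diagrams to form the trace, the sideways-crossing term, being of the form $[g\circ f - f\circ g]$ for morphisms $f,g$ going between distinct $1$-morphisms, vanishes identically in $\Tr$. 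What remains is a sum of products of bubbles with prescribed dot distribution, and the task is to show that this sum evaluates to $p_{i,r+s}(\lambda)$. This is where the combinatorics is most delicate: one expands using the definition of fake bubbles \eqref{eq_fake_nleqz}--\eqref{eq_fake_ngeqz}, applies the infinite Grassmannian relation \eqref{eq_infinite_Grass} to simplify the nested convolutions, and matches the result against the three equivalent formulas for $p_{i,r+s}(\lambda)$ in \eqref{eq_defpil}. The weight-independence of the final expression (so that it produces the correct $\mathsf{H}_{i,r+s}$ uniformly) is the key check.

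For the Serre relations (\ref{C6a}) and (\ref{C6b}), I would exploit the KLR relations internal to the upward (resp.\ downward) subalgebra. When $a_{ij}=0$, relation \eqref{eq_r2_ij-gen} says the $ij$-double crossing is the identity (up to $t_{ij}$), so $[\mathsf{E}_{i,r}^\pm,\mathsf{E}_{j,s}^\pm]$ vanishes in $\Tr$ after cyclically rotating the crossings. When $a_{ij}=-1$, relation \eqref{eq_r3_hard-gen} together with \eqref{eq_r2_ij-gen} gives precisely the diagrammatic form of the quadratic Serre relation, so the iterated bracket $[\mathsf{E}_{i,r_1}^\pm,[\mathsf{E}_{i,r_2}^\pm,\mathsf{E}_{j,s}^\pm]]$ closes up to zero in the trace. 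To conclude, once all defining relations hold, the universal property of $\dot{\bf U}(\g[t])$ (triangular decomposition plus integrality via the image formulas expressing divided powers as primitive idempotents, as in \cite{BHLZ}) guarantees that $\rho$ extends uniquely to a $\k$-linear functor, completing the construction.
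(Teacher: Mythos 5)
Your overall strategy—define $\rho$ on generators and verify the defining relations \eqref{C1}--\eqref{C6b} in the trace—is exactly the paper's approach, and your treatments of \eqref{C1}--\eqref{C4}, \eqref{C6a}, and the $i\neq j$ case of \eqref{C5} match the paper's in substance (bubble slides with the $(-v_{ij})^r=1$ observation, dot-slide relations, $t_{ij}=t_{ji}$ when $a_{ij}=0$, and the mixed relations \eqref{mixed_rel}).

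The gap is in relation \eqref{C5} for $i=j$, which you yourself flag as the main obstacle. You correctly identify that after applying \eqref{eq_EF}/\eqref{eq_FE}, closing the trace, and killing the sideways-crossing term, one is left needing to show that a specific sum of dotted bubble products equals $p_{i,r+s}(\lambda)$, and you enumerate the tools (fake bubbles, infinite Grassmannian, the three formulas in \eqref{eq_defpil}). But you never carry this out; the passage remains a to-do list (``the task is to show\ldots''). This is not a trivial omission: it is the entire rank-one content of the result, and the combinatorial identity is genuinely intricate (it is essentially a statement about differential operators on the ring of symmetric functions). The paper sidesteps this by invoking \cite{BHLZ}, where the $\mathfrak{sl}_2$ trace computation was already done in full, and then explicitly restricts all remaining verification to $i\neq j$. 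Your write-up, by not citing that result and not completing the computation, leaves the crucial case unproved. A similar (milder) issue appears in your treatment of \eqref{C6b}, where ``the iterated bracket closes up to zero in the trace'' is asserted rather than derived from \eqref{eq_r3_hard-gen}; the paper also compresses this step, but since you aim to give the full argument rather than cite, you should exhibit the cancellation. Either add the explicit $\mathfrak{sl}_2$ calculation or replace that sketch with a citation to the rank-one result as the paper does.
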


We will denote by $\rho^\pm$ and $\rho^0$ the restrictions of $\rho$ to the subcategories
 $\dot{\bfU}^\pm(\mf{g}[t])$ and $\dot{\bfU}^0(\mf{g}[t])$,
 respectively.


\begin{proof}
To prove this proposition we verify the current algebra relations  using the relations in the 2-category $\U^\ast_Q(\mf{g})$. We only need
to consider the case \(i\neq j\), since the relations in $\U^\ast_Q(\fsl_2)$ have been proven in \cite{BHLZ}.
\eqref{C1} is clear, since bubbles commute with each other.  Axiom \eqref{C2} follows immediately from the definition of $p_{i,0}\onel = \la i, \lambda\ra\onel$.

Consider the equality \eqref{C3}.
The case $a_{ij}=0$ follows easily from the bubble slide relation.
Suppose \(a_{ij}=-1\).
Using the power sum slide identity
\eqref{eq_powerslide2},
we get
\[ \mathsf{H}_{i,r}\mathsf{E}_{j,s} 1_\lambda\;\; = \;\;
   \mathsf{E}_{j,s} \mathsf{H}_{i,r} 1_\lambda - (-v_{ij})^r\;\mathsf{E}_{j,r+s}  1_\lambda\;\; = \;\;
   \mathsf{E}_{j,s} \mathsf{H}_{i,r} 1_\lambda - \;\mathsf{E}_{j,r+s}  1_\lambda.\]

The relation\[[ \mathsf{H}_{i,r}, \mathsf{F}_{j,s}] 1_\lambda = (-v_{ij})^r \mathsf{F}_{j,r+s}1_\lambda =\mathsf{F}_{j,r+s}1_\lambda .\]
can be proven in a similar way.  This verifies \eqref{C3}.

The relation \eqref{C4} follows from the relations
\eqref{eq_r2_ij-gen} and \eqref{eq_dot_slide_ij-gen}, which imply
\[ [ \mathsf{E}_{i,r+1},\mathsf{E}_{j,s} ]1_\lambda =-v_{ij}[\mathsf{E}_{i,r},\mathsf{E}_{j,s+1} ]1_\lambda =[\mathsf{E}_{i,r},\mathsf{E}_{j,s+1} ]1_\lambda.\]
  If we reverse the arrows in the preceding equation, they still hold:
   \[ [ \mathsf{F}_{i,r+1},\mathsf{F}_{j,s} ]1_\lambda=-v_{ij}[\mathsf{F}_{i,r},\mathsf{F}_{j,s+1} ]1_\lambda=[\mathsf{F}_{i,r},\mathsf{F}_{j,s+1} ]1_\lambda .\]
The choice of signs  in \eqref{homomorp} corresponds to
  \[ [x^\pm_{i, r+1}, x^\pm_{j,s}]=[x^\pm_{i,r}, x^\pm_{j,s+1}].\]

The relation \eqref{C5} for the case \(i\neq j\) follows from the relations
\eqref{eq_dot_slide_ij-gen} and \eqref{mixed_rel} which can be used to show
\[
\mathsf{E}_{i,r}\mathsf{F}_{j,s} 1_\lambda
 \;\; = \;\;
\mathsf{F}_{j,s} \mathsf{E}_{i,r}1_\lambda.
\]

The relation \eqref{C6a} follows since
\[
 x_{i,r}^{+} x_{j,s}^{+}
 = \mathsf{E}_{i,r}\mathsf{E}_{j,s}
 =t_{ij}^{-1}t_{ji}\mathsf{E}_{j,s}\mathsf{E}_{i,r}
 = x_{j,s}^{+} x_{i,r}^{+} ,
\]
since $t_{ij}=t_{ji}$ when $a_{ij}=0$.   The case $[x_{i,r}^{-} ,x_{j,s}^{-}]$ is proven similarly.
To verify \eqref{C6b}, the color dependent rescalings play no role. By  a direct computation we get
\[
\mathsf{E}_{i,r_1}\mathsf{E}_{j,s}\mathsf{E}_{i,r_2}1_{\lambda} +\mathsf{E}_{i,r_2}\mathsf{E}_{j,s}\mathsf{E}_{i,r_1}1_{\lambda} = \mathsf{E}_{i,r_1}\mathsf{E}_{i,r_2}\mathsf{E}_{j,s} 1_{\lambda}+\mathsf{E}_{j,s}\mathsf{E}_{i,r_1}\mathsf{E}_{i,r_2} 1_{\lambda}.
\qedhere\]
\end{proof}

%
\subsection{Triangular decomposition}
%

Let $\Ucat_{\rm {tr}}^+=\Ucat^+_{{\rm {tr}},Q}$ denote the $\Bbbk$-linear subcategory of $\Tr(\Ucat^{\ast}_Q(\mf{g}))$ with objects indexed by the weight lattice
$\Ob(\Ucat^+_{\rm {tr}}) = X$, and with morphisms generated by composites of $\mathsf{E}_{i,a}$ for $i \in I$ and $a \geq 0$.  Similarly,
let $\Ucat^-_{\rm {tr}}=\Ucat^-_{\rm {tr}, Q}$ denote the $\Bbbk$-linear subcategory of $\Tr(\Ucat^{\ast}_Q(\mf{g}))$ with objects $\Ob(\Ucat^-_{\rm {tr}}) = X$ and morphisms generated by $\mathsf{F}_{j,b}$ for $j \in I$ and $b\geq 0$.   We define $\Ucat^0_{\rm {tr}}=\Ucat^0_{\rm {tr},Q}$ as the $\Bbbk$-linear subcategory of $\Tr(\Ucat^{\ast}_Q(\mf{g}))$ with objects $\Ob(\Ucat^0_{\rm {tr}}) = X$ and morphisms generated by bubbles.
So for any $\l\in X$, we have  $1_\l\Ucat^0_{\rm {tr}}1_\l\cong
{\rm{Sym}}^{\otimes |I|}$,
where $|I|$ is the cardinality of $I$.
\begin{prop}
  The map $\rho^0\maps\dcurzg\to\Ucat^0_{\rm {tr}}$ is an isomorphism.
\end{prop}

\begin{proof}
  We will show below that the $\Z$-version of the proposition holds.  The
  $\k$-version follows by tensoring with $\k$.

  For each $\l$, we have a surjective map of the $\Z$-forms
  \begin{samepage}
    \begin{align*}
      \eta^0 \colon {\bf{\dot
      U}}^0(\mf{sl}_2[t])^{\otimes|I|}&\to
                                        {\dot{\mathscr{C}}_\Z^0\mf{g}}:=\dcurzg\cap{\dot{\mathscr{C}}_\Z\mf{g}}\\
      1\otimes \cdots \otimes \xi_r\otimes \cdots \otimes 1&\mapsto \xi_{i,r}.
    \end{align*}
  \end{samepage}
By Lemma 8.2 in \cite{BHLZ}, in
  the $\fsl_2$ case, $\rho^0$ maps the idempotented version of
  Garland's $\Z$-form, ${\bf{\dot U}}^0(\mf{sl}_2[t])$
  isomorphically onto $\rm Sym$.  Therefore, in our case, $\rho^0
  \eta^0$
  maps $\dcurzg\cong{\bf{\dot U}}^0(\mf{sl}_2[t])^{\otimes|I|}$
  isomorphically onto ${\rm Sym}^{\otimes|I|}$, which is isomorphic to
  $1_\l\Ucat^0_{\rm {tr}}1_\l$ as we observed above.  Since $\rho^0
  \eta^0$ is an isomorphism and $\eta^0$ is surjective, we must have that $\rho^0$
  is an isomorphism.
\end{proof}



\begin{prop}\label{prop:triangular}
Let $f$ be a $2$-endomorphism in $\Ucat^{\ast}(\mf{g})$.
Then the class $[f]$ in $\Tr(\Ucat^{\ast}(\mf{g}))$ can be expressed as a sum
\[
 [f] =\sum [f^0] [f^+] [f^-]
\]
where $[f^\pm]$, and $[f^0]$  are in $\Ucat_{\rm {tr}}^{\pm}$, $\Ucat_{\rm {tr}}^0$,  respectively.
\end{prop}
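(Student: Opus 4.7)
The plan is to combine a normal-form reduction for 2-morphisms in $\U^*$ with the cyclicity of the trace. First, I would argue that it suffices to consider the case where $f$ is a 2-endomorphism of a 1-morphism of the form $\cal{E}_{\mathbf{i}}\cal{F}_{\mathbf{j}}\onel$. Starting from any $f \in \End(x)$ for a general 1-morphism $x$, I would iteratively apply the extended $\mathfrak{sl}_2$ relations \eqref{eq_EF} and \eqref{eq_FE}, which rewrite an $\cal{F}_i\cal{E}_i$ composition in terms of an $\cal{E}_i\cal{F}_i$ composition plus corrections that are either capped off (producing bubbles) or involve the identity on a strand (with bubble decoration). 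After finitely many applications one reduces to endomorphisms of 1-morphisms with all $\cal{E}$'s on the left and all $\cal{F}$'s on the right.

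Next, for $f \in \End(\cal{E}_{\mathbf{i}}\cal{F}_{\mathbf{j}}\onel)$, the plan is to straighten the diagram representing $f$ into a horizontal composite of three pieces: an "upward" diagram acting only on the $\cal{E}_{\mathbf{i}}$-strands (a generator of $\Ucat^+_{\rm tr}$), a "downward" diagram acting only on the $\cal{F}_{\mathbf{j}}$-strands (a generator of $\Ucat^-_{\rm tr}$), and a collection of closed bubbles contributing to $\Ucat^0_{\rm tr}$. By induction on the number of cups or caps linking $\cal{E}$-strands to $\cal{F}$-strands, each such mixed cup/cap can be eliminated using \eqref{eq_EF}, \eqref{eq_FE}, the $Q$-cyclicity relations \eqref{eq_cyclic_dot}--\eqref{eq_crossr-gen}, and the KLR relations \eqref{eq_r2_ij-gen}--\eqref{eq_r3_hard-gen}, at the cost of introducing bubble terms and lower-complexity diagrams. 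The termination of this procedure rests on a suitable complexity measure (e.g.\ the number of mixed caps plus the total degree of dots carried on cup/cap arcs).

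Finally, I would invoke cyclicity of the trace $[fg] = [gf]$ to organize the resulting horizontal composite into the prescribed order $[f^0][f^+][f^-]$, moving the bubble piece into its canonical position since bubbles commute with everything up to the slide relations already collected in Section~\ref{sec_sym-bub}. The identification with $\Ucat^\pm_{\rm tr}$ is then by definition, since these subcategories are generated by the relevant upward or downward elementary trace classes.

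The hard part will be the normal-form step: ensuring that the reduction of mixed diagrams terminates and produces a \emph{sum} of decomposed diagrams, rather than accumulating uncontrolled correction terms. The key subtlety is that applying \eqref{eq_EF} or \eqref{eq_FE} can reintroduce $\cal{E}\cal{F}$ patterns within the bubble-part correction, so one must carefully organize the induction (perhaps by an auxiliary lexicographic measure combining the number of strands, number of mixed cups/caps, and total dot degree) so that each rewriting strictly decreases the measure. Once this termination is established, the trace cyclicity bookkeeping in the final paragraph is essentially formal.
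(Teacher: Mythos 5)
Your approach is essentially the same as the paper's: reduce to endomorphisms of $\sE_{\ii^+}\sF_{\ii^-}\onel$ by eliminating inversions using \eqref{mixed_rel} and \eqref{eq_FE} together with the trace relation, then eliminate remaining mixed caps and cups, finally sliding bubbles into place. The termination worry you flag is handled in the paper by organizing the induction first on the length $|\ii|$ and then on the inversion count, since the cap/cup correction terms from \eqref{eq_FE} live on strictly shorter $1$-morphisms.
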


\begin{proof}
Assume $f$ is a 2-endomorphism  of a sequence $\ii \in I^{|\ii|}$ of generating 1-morphisms $\E_i\onel$ and $\F_i\onel$
in $\U^\ast$ of length $|\ii|$. We'll prove this result by induction
on $|\ii|$ and the number of crossings in the diagram.  If $|\ii|=0$, then $[f]=[f^0].$

One way to think of taking the trace class $[f]$ is that we can slice
through this diagram at some horizontal level and then move the
portion from the top to the bottom; we refer to this as a ``cut and switch.''  Let us say the top of the diagram
is at $y=1$ and the bottom at $y=-1$.  Using isotopies and the spanning set of
\cite[\S 3.2.3]{KL3}, we can assume that all bubbles are at the far
left\footnote{In \cite{KL3}, they are at far right, but there is an
  auto-equivalence of the category switching these.}, and
the number of strands crossed by $y=0$ is minimal amongst horizontal
lines.  If this number is lower than $|\ii|$, then we can cut at
$y=0$ and switch the two halves, and reduce the size of $|\ii|$.  Thus,
we can assume that no strand in $\ii$ forms a cup or a cap.

Now, we can rewrite
our diagram as a sum of diagrams where at $y=0$, reading
from left to right, we encounter all upward oriented strands before we
encounter any downward ones.  If not, then there is a consecutive pair
of strands where the leftward one (of label $i$) is downward, and the
rightward one (of label $j$)
is upward.  Using \eqref{mixed_rel} if $i\neq j$, and  \eqref{eq_FE}
if $i=j$, we can rewrite this diagram as a sum of diagrams with fewer
out of order pairs.  Applying this inductively, we can reduce the
number of such pairs to 0.

Now, we cut and switch the resulting diagrams. All but
one term has  $|\ii'|<|\ii|$, so by induction these have the desired
form.  One remains with $|\ii'|=|\ii|$, with $\ii'=\ii^+\ii^-$, with
$\ii^\pm$ only containing elements of $\pm I$.   The spanning set of
\cite[\S 3.2.3]{KL3} shows that a diagram connecting these can be written as sum
of diagrams of the
form $f^0\hor f^+\hor f^-$, and diagrams with cups and caps.  As
before, we can cut and flip any diagram with a cup or cap, and use
the inductive hypothesis.

Thus, ultimately, we have that $[f]=\sum [f^0][ f^+][
f^-]$ plus diagrams with representatives given by 2-endomorphisms of
$\ii'$ with $|\ii'|<|\ii|$.  By induction, this completes the proof.
\end{proof}

%
\section{Surjectivity results}
\label{surjectivity}
%
\subsection{Surjectivity of $\rho$}
%

\begin{thm}\label{thm:minus-surjection}
  The linear functor $\rho^-\maps \dot{\mathscr{C}}^-\!\mf{g}\to \Ucat^-_{\rm {tr}}$ is an isomorphism on objects and full on morphisms (surjective as an algebra homomorphism).
\end{thm}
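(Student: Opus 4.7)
The plan is essentially to unpack the definitions and match generators, so the argument should be short. On objects, both $\dot{\mathbf{U}}^-(\g[t])$ and $\Ucat^-_{\mathrm{tr}}$ have object set equal to the weight lattice $X$, and $\rho^-$ is tautologically the identity on objects; this gives the bijection on objects immediately.

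For fullness on morphisms, I observe that by \eqref{homomorp} the functor $\rho^-$ sends the basic generator $x^-_{j,s} 1_{\lambda}$ of $\dot{\mathbf{U}}^-(\g[t])$ to the trace class $\mathsf{F}_{j,s} 1_{\lambda}$. On the other hand, $\Ucat^-_{\mathrm{tr}}$ was \emph{defined} to be the $\Bbbk$-linear subcategory of $\Tr(\Ucat^{\ast}_Q(\g))$ whose hom spaces are $\Bbbk$-linearly generated under composition by precisely the classes $\mathsf{F}_{j,b} 1_{\lambda}$. Because $\rho^-$ is a $\Bbbk$-linear functor its image is automatically closed under composition and $\Bbbk$-linear combinations, and it contains every such generator, so the image must coincide with all of $\Ucat^-_{\mathrm{tr}}$.

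The one point I would linger on is the treatment of divided powers, which I expect to be the only place where any computation is really needed. The source category $\dot{\mathbf{U}}^-(\g[t])$ contains additional generators $(x^-_{j,s})^{(a)}$ that $\rho^-$ sends to the trace classes $\mathsf{F}_{j,s}^{(a)} 1_{\lambda}$, represented via a primitive nilHecke idempotent as $[y_1^s \cdots y_a^s e_a]$ (the downward analogue of the expression $\mathsf{E}_{i,r}^{(a)} = [y_1^r\cdots y_a^r e_a]$ recalled before Proposition \ref{prop-cur}). Over a field of characteristic zero one has $\mathsf{F}_{j,s}^{(a)} = \tfrac{1}{a!}(\mathsf{F}_{j,s})^a$, so the divided powers automatically lie in the subcategory generated by the undivided $\mathsf{F}_{j,b}$. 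In arbitrary characteristic I would verify this inclusion directly by working in the nilHecke algebra, using the standard fact (as in \cite[\S 9]{BHLZ}) that any two primitive idempotents become equal in the trace and that the identity is the sum of $a!$ such idempotents, to realise $\mathsf{F}_{j,s}^{(a)}$ through composition of undivided dotted strands. Once this bookkeeping is in place, the generator-matching in the previous paragraph immediately yields fullness, and hence $\rho^-$ is surjective as an algebra homomorphism.
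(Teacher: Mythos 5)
Your argument rests on reading the definition of $\Ucat^-_{\mathrm{tr}}$ as ``the subcategory of $\Tr(\Ucat^{\ast}_Q(\mf g))$ whose hom spaces are spanned by composites of the classes $\mathsf{F}_{j,b}1_\lambda$,'' which instantly makes fullness a tautology. That should have been a warning sign: with that reading the theorem says nothing, yet the paper proves it by a delicate two-stage induction. The version of $\Ucat^-_{\mathrm{tr}}$ that is actually used downstream---in the triangular decomposition of Proposition~\ref{prop:triangular} and in Theorem~\ref{rho-surjective}---is the linear subcategory of $\Tr(\Ucat^{\ast}_Q(\mf g))$ whose morphisms are trace classes $[f]$ of \emph{arbitrary} $2$-endomorphisms of $1$-morphisms $\mathcal{F}_{\ii}\onel$ built only from $\mathcal{F}_j$'s. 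Such $f$ include diagrams with crossings (the KLR generators), and it is a genuine theorem, not a definitional unwinding, that the trace classes of these diagrams lie in the span of products of the single-strand dotted classes $\mathsf{F}_{j,b}$. Your proof never touches a crossing, so the real content of the statement is missing.

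The paper's argument works as follows. Fix an infinite sequence $\mathbf{i}=(i_1,i_2,\dots)$ in which every $i\in I$ occurs infinitely often, encode $1$-morphisms by multiplicity vectors $\mathbf{a}$ (so $\mathcal{F}_{\mathbf a}$ is built from $a_1$ copies of $\mathcal{F}_{i_1}$, then $a_2$ of $\mathcal{F}_{i_2}$, and so on), and order the $\mathbf{a}$ lexicographically. One then proves, by induction over this order, that the image of $\End(\mathcal{F}_{\mathbf a})$ in the trace lies in the span of the images of \emph{polynomial} (dot-only) endomorphisms of $\End(\mathcal{F}_{\mathbf b})$ for $\mathbf{b}\ge\mathbf{a}$. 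The base case, a single non-zero $a_m$, is a nilHecke algebra, where the statement is \cite[Cor.~9.8]{BHLZ}. For a general $\mathbf{a}$ one further inducts on the number of crossings in a single diagram $D$: modulo diagrams with fewer crossings one may isotope strands through crossings, so one can arrange that the first $k-1$ strands are crossing-free and that the two strands at position $k$ cross, with all other crossings to their right; a count of the colors visible at the slice just below that crossing shows $D$ factors through $\mathcal{F}_{\mathbf b}$ for some $\mathbf{b}>\mathbf{a}$, closing the induction (with an auxiliary induction on $k$). Only after this crossing-elimination step is $\Ucat^-_{\mathrm{tr}}$ known to be spanned by dot diagrams and hence to lie in the image of $\rho^-$. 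The divided-power bookkeeping you flag is genuinely needed, but it is a footnote compared to this reduction, which is the heart of the proof.
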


\newcommand{\Bi}{\mathbf{i}}
\newcommand{\Ba}{\mathbf{a}}
\newcommand{\Bb}{\mathbf{b}}
Choose any infinite sequence $\Bi=(i_1,i_2,\dots)\in I^{\Z_{>0}}$ such
that every element of $I$ appears an infinite number of times.  For
any infinite word in the integers, $\Ba=(a_1,a_2,\dots)$ with almost
all $a_i=0$, we let
$\Bi_\Ba$ be the concatenated word $i_1^{a_1}i_2^{a_2}\cdots$, and let
$\cal{F}_\Ba$ be the associated 1-morphism in $  \Ucat^-$.

Use induction over the lexicographic order on sequences to show that
 $\Ucat^-_{\rm {tr}}$ is spanned by dots.

 \begin{proof}
   We will give an inductive proof of the following statement:
   \begin{itemize}
   \item [$(*_\Ba)$] The image of $\End(\sF_{\Ba})$ in $\Ucat^-_{\rm {tr}}$  lies in the
     sum of
     images of the polynomial endomorphisms of $\End(\sF_\Bb)$ for
     $\Bb\geq \Ba$.
   \end{itemize}
Since every 2-morphism in $\Ucat^-$ factors through a finite sum of
$\sF_\Ba$'s, establishing this for every $\Ba$ will complete the proof.

First, assume that only one entry of $\Ba$ is non-zero.  In this case,
$\End(\sF_\Ba)$ is a nil-Hecke algebra, and thus has trace generated by
its polynomial subalgebra, as proven in \cite[9.8]{BHLZ}.

Now assume that $\Ba$ is arbitrary.  Every endomorphism of $\sF_\Ba$ can
be written as a sum of diagrams, so we may as well consider the case
of a single diagram $D$.  If the diagram has no crossings, it is
polynomial, and we are done.  Now having fixed $\Ba$, we induct on the
number of crossings.  Modulo elements with a lower number of crossings
than $D$, we can isotope the strands of $D$ though crossings.  In
particular, for some $k$, we can assume that the leftmost $k-1$ strands have no
crossings, and that the strands which $k$th from the left at the top
and bottom cross.  Let us call these
strands $U$ and $V$. We can further assume that all crossings occur to
the right of $U$ and of $V$ (or on the strands themselves), in the
region marked $D'$ in the diagram below.  We will now also induct (upward) on $k$.

\[\tikz[very thick,scale=1.3]{
\draw (0,-.5) -- node[ below,at start]{$i_1$} node[ above,at end]{$i_1$}
(0,.5);
\node at (.5,0){$\cdots$};
\draw (1,-.5) -- node[ below,at start]{$i_{m}$} node[ above,at end]{$i_{m}$}
(1,.5);
\draw (1.5,-.5) to[out=90,in=-150] node[ below,at start]{$i_{m}$} node[ above,at end]{$i_{m}$}
(4.5,.5);
\draw (4.5,-.5) to[out=150,in=-90] node[ below,at start]{$i_{m}$} node[ above,at end]{$i_{m}$}
(1.5,.5);
\node at (2,-.4){$\cdots$};
\node at (2,.4){$\cdots$};
\draw (2.5,-.5) -- node[ below,at start]{$i_{m}$} node[ above,at end]{$i_{m}$}
(2.5,.5);
\draw (3,-.5) -- node[ below,at start]{$i_{m+1}$} node[ above,at end]{$i_{m+1}$}
(3,.5);
\node at (3.5,-.4){$\cdots$};
\node at (3.5,.4){$\cdots$};
\fill[gray!10!white,draw=black,thin, dashed] (4.6,-.3) to[out=180,in=0] (3,-.18) to[out=180,in=-30] (1.8,0)
to[out=30,in=180] (3,.18) to[out=0,in=180] (4.6,.3) to [out=0,in=90]
(4.8,0) to[out=-90,in=0] (4.6,-.3);
\node at (3.5,0){$D'$};
\draw[decorate,decoration=brace,-] (1.75,-.9) --
    node[below,midway]{$k$ strands} (-.25,-.9);
\draw[decorate,decoration=brace,-] (-.25,.9) --
    node[above,midway]{$k'$ strands} (2.65,.9);
}\]

 Let $m$ be the
smallest integer such that $k'=a_1+\cdots +a_m \geq k$.
Note that by definition, the strands between the $k$th and $k'$th from
the left are the same color by definition.  Thus, if the strands $U$
and $V$ have both ends left of the $k'$th strand, we can get rid of
their crossing, and increase $k$.  Thus, we can assume that $U$ and
$V$ do have one end at the $k$th terminal from left and their other at
a terminal further right than the $k'$th.

We wish to show that $D$ factors through $\sF_\Bb$, where $\Bb=(a_1,
a_2,\dots, a_m+1,\dots)$, which is thus higher in lexicographic order.
Consider that after crossing $V$, the strand $U$ crosses the $k+1$st
strand from the left, the $k+2$nd, etc. until it reaches the $k'$th.
Below $U$, these other strands don't cross; thus, they all have the
same label at $U$, that is $i_m$. Thus, at the $y$-value just below
the crossing of $U$ and $k'$th strand, we see $a_1$ strands with label
$i_1$, etc. up until $a_m$ strands with label $i_m$, followed by $U$
which also has this label.  Thus, indeed, this slice is associated to $\Bb=(a_1,
a_2,\dots, a_m+1,\dots)$, which is, of course, greater in
lexicographic order than $\Ba$.

Following through the induction on $k$, this shows $(*_\Ba)$ and thus
the desired statement.
 \end{proof}

\begin{thm}\label{rho-surjective}
The homomorphism $\rho\maps \dcurkg\longrightarrow \Tr(\U^\ast_Q(\mf{g}))$, is surjective for all $\mf{g}$.
\end{thm}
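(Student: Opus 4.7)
The plan is to assemble this theorem from the three results that immediately precede it: the triangular decomposition of trace classes (Proposition \ref{prop:triangular}), the isomorphism $\rho^0\maps\dot{\bfU}^0(\mf{g}[t])\to\Ucat^0_{\rm {tr}}$, and the surjectivity of $\rho^-$ (Theorem \ref{thm:minus-surjection}), together with the analogous surjectivity of $\rho^+$. Given an arbitrary class $[f]\in\Tr(\U^\ast_Q(\mf{g}))$, Proposition \ref{prop:triangular} lets us write
\[
[f]=\sum_i [f^0_i]\,[f^+_i]\,[f^-_i],\qquad [f^\pm_i]\in\Ucat^\pm_{\rm {tr}},\ [f^0_i]\in\Ucat^0_{\rm {tr}}.
\]
Since $\rho$ is a linear functor between categories, and since the current algebra admits the triangular decomposition recorded in Section \ref{sec:current-algebra}, it suffices to show that each factor $[f^0_i]$, $[f^+_i]$, $[f^-_i]$ is individually in the image of $\rho$, and then to lift their product via the corresponding PBW-type product in $\dot{\bfU}(\mf{g}[t])$.

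The Cartan factors $[f^0_i]$ are handled directly by the proposition identifying $\Ucat^0_{\rm {tr}}\cong \dot{\bfU}^0(\mf{g}[t])$, and the negative factors $[f^-_i]$ by Theorem \ref{thm:minus-surjection}. To deal with the positive factors, I would argue that the positive analogue $\rho^+\maps \dot{\bfU}^+(\mf{g}[t])\to \Ucat^+_{\rm {tr}}$ is surjective by the same proof as Theorem \ref{thm:minus-surjection}. Concretely, the 2-category $\Ucat^\ast_Q(\mf{g})$ carries a symmetry that reflects diagrams through a horizontal axis (and, if needed, replaces $Q$ by an equivalent choice of scalars); this symmetry exchanges $\sE_i$ and $\sF_i$ and carries $\Ucat^-_{\rm {tr}}$ onto $\Ucat^+_{\rm {tr}}$ compatibly with $\rho^\pm$. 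Running the inductive argument of Theorem \ref{thm:minus-surjection} on the words $\Bi_\Ba$ built from upward strands gives surjectivity of $\rho^+$, invoking the nil-Hecke trace computation from \cite[9.8]{BHLZ} in the single-color base case exactly as before.

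Combining these three surjectivities, for each index $i$ we may pick lifts $u^\pm_i\in\dot{\bfU}^\pm(\mf{g}[t])$ and $u^0_i\in\dot{\bfU}^0(\mf{g}[t])$ with $\rho(u^{\epsilon}_i)=[f^{\epsilon}_i]$ for $\epsilon\in\{+,0,-\}$. Since $\rho$ is a functor,
\[
\rho\!\left(\sum_i u^0_i\,u^+_i\,u^-_i\right)=\sum_i [f^0_i]\,[f^+_i]\,[f^-_i]=[f],
\]
which proves surjectivity. I expect the main obstacle — really the only new piece of work — to be verifying the positive-part surjectivity, since one must check that reflecting diagrams vertically interacts correctly with the choice of scalars $Q$ and with the biadjunction; the remainder of the argument is formal once the triangular decomposition is in hand.
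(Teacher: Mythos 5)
Your proposal is correct and takes essentially the same route as the paper: invoke the triangular decomposition from Proposition \ref{prop:triangular}, then show each of $\Ucat^0_{\rm {tr}}$, $\Ucat^-_{\rm {tr}}$, $\Ucat^+_{\rm {tr}}$ lies in the image of $\rho$ using the Cartan isomorphism and Theorem \ref{thm:minus-surjection}. The one place you are more careful than the paper is in spelling out how the $\rho^+$ case reduces to $\rho^-$ (the paper simply cites Theorem \ref{thm:minus-surjection} for both $[f^\pm]$); your observation that the inductive argument and the nil-Hecke base case \cite[9.8]{BHLZ} run verbatim on upward strands is exactly the right justification for that elision.
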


\begin{proof}
By the triangular decomposition of Proposition \ref{prop:triangular},
it suffices to show that the image contains any class $[f^\pm]$ or
$[f^0]$ where $f^\pm$, and $f^0$  are in $\Ucat^\pm$, $\Ucat^0$,
respectively.  For $[f^\pm]$, this follows immediately from Theorem
\ref{thm:minus-surjection}.  For $f^0$, this is clear by the
isomorphism between $1_\lambda\dot{\bfU}^0(\g[t])1_\lambda\cong \End(\onel)={\rm Sym}^{|I|}$ for any
weight $\lambda$.
\end{proof}

Note that even if $Q$ is arbitrary (i.e. we may have $v_{ij}\neq
(-1)^{a_{ij}}$) and we weaken our hypotheses to include symmetrizable
Cartan matrices rather than just simply-laced, we
can use the argument of
Theorem \ref{rho-surjective} to show the weaker statement:
\begin{cor}\label{generating-set}
   For any symmetrizable Kac-Moody algebra $\g$ and choice of scalars
  $Q$, the morphisms
   $\mathsf{E}_{i,r}^{(n)}1_{\lambda},\mathsf{F}_{i,r}^{(n)}1_{\lambda}$
   for $i\in I$, $r\ge0$, $n\ge1$ and $\l\in X$,
generate all the morphism spaces of $\Tr(\U^\ast_Q(\mf{g}))$ as a linear category.
\end{cor}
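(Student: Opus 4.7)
The plan is to follow the template of the proof of Theorem \ref{rho-surjective} but to avoid appealing to $\rho$ being a homomorphism (which is the only step that required $v_{ij}=(-1)^{a_{ij}}$), and in addition to show that the Cartan generators $\mathsf{H}_{i,r}\mathbf{1}_{\lambda}$ can themselves be recovered as commutators of $\mathsf{E}$'s and $\mathsf{F}$'s. First I would invoke the triangular decomposition of Proposition \ref{prop:triangular}, which asserts that any class in $\Tr(\Ucat^*_Q(\mf{g}))$ can be written as a sum of products $[f^0][f^+][f^-]$ with $[f^\pm]\in\Ucat_{\rm{tr}}^{\pm}$ and $[f^0]\in\Ucat_{\rm{tr}}^0$. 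The proof of that proposition uses only the mixed relations \eqref{mixed_rel}, \eqref{eq_FE} and the trace property, and the $Q$-dependent factors $t_{ij}$ appearing there only rescale intermediate coefficients; in particular, the decomposition remains valid for arbitrary $Q$.

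Next I would apply Theorem \ref{thm:minus-surjection} and its evident analogue for $\Ucat_{\rm{tr}}^+$ to reduce the factors $[f^{\pm}]$ to sums of products of $\mathsf{E}_{i,r}^{(n)}\mathbf{1}_{\lambda}$ and $\mathsf{F}_{i,r}^{(n)}\mathbf{1}_{\lambda}$. The proof of Theorem \ref{thm:minus-surjection} is purely combinatorial: an induction on the lexicographic order on sequences $\Ba$ together with an induction on the number of crossings in a diagram, reducing every class to a polynomial (that is, dotted) endomorphism of some $\sF_{\Ba}$. The KLR scalars enter only through lower-order terms that are absorbed by the inductive hypothesis, so the argument goes through for an arbitrary choice of $Q$. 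This handles the $\Ucat_{\rm{tr}}^{\pm}$ pieces of the triangular decomposition.

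Finally, we must handle the Cartan part $[f^0]\in\Ucat_{\rm{tr}}^0 \cong \bigoplus_{\lambda} \prod_{i\in I}\sym$. This algebra is generated (over $\Q$ by the power sums $\mathsf{H}_{i,r}\mathbf{1}_{\lambda}$, or over $\Z$ by Garland's integral basis for each $\mathfrak{sl}_2$-copy) by classes which are supported on a single color $i$. It therefore suffices to show each $\mathsf{H}_{i,r}\mathbf{1}_{\lambda}$ lies in the span of compositions of $\mathsf{E}$'s and $\mathsf{F}$'s; this is established by computing the commutator $[\mathsf{E}_{i,r},\mathsf{F}_{i,0}]\mathbf{1}_{\lambda}$ (or more refined brackets for the Garland integral generators) using the single-color relations \eqref{eq_EF}--\eqref{eq_FE}. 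Since these relations involve only strands of one color, where $t_{ii}=1$ and no color-dependent scalar intervenes, the calculation reduces to the $\mathfrak{sl}_2$-case already carried out in \cite{BHLZ} and is entirely insensitive to $Q$. The only conceivable obstacle would be if the coefficients in the commutator expression depended essentially on $Q$ and so forced us to reintroduce the Chevalley assumption, but since the computation takes place within a single $\mathfrak{sl}_2$-block this does not occur, completing the proof.
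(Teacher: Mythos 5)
Your proof is correct and tracks what the paper intends when it writes ``we can use the argument of Theorem \ref{rho-surjective}'': triangular decomposition (Proposition \ref{prop:triangular}) plus Theorem \ref{thm:minus-surjection} (and its $+$-analogue), both of which are combinatorial and insensitive to the value of $v_{ij}$, just as you observe. The only thing the paper leaves tacit is how the Cartan factor $[f^0]\in\End(\onel)\cong\sym^{|I|}$ lands in the span of compositions of $\mathsf{E}$'s and $\mathsf{F}$'s; you supply exactly the right ingredient, namely that the single-color relations give $[\mathsf{E}_{i,r},\mathsf{F}_{i,s}]1_\lambda=\mathsf{H}_{i,r+s}1_\lambda$ in the trace (and the Garland integral analogues) by the $\mathfrak{sl}_2$ computation from \cite{BHLZ}, which never involves $t_{ij}$ for $i\neq j$, and these generate $\prod_{i\in I}\sym$. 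So your argument is not a different route but rather the paper's route with the Cartan step made explicit, which is a genuine improvement in completeness.
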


Shan, Varagnolo and Vasserot give a generalization of the current
algebra in \cite[Prop. B.5(a--f)]{SVV}.  There they only show that these
relations hold in the action of
$[\mathsf{E}_{i,r}^{(n)}1_{\lambda}],[\mathsf{F}_{i,r}^{(n)}1_{\lambda}]$
on the trace of the cyclotomic quotient, but the same argument should
show the analogue of Theorem \ref{rho-surjective} in this more general situation.

%
\section{Injectivity results}
\label{sec-injectivity}

\subsection{Cyclotomic quotients}


Fix a highest weight $\lambda$.  The 2-category $\dot{\Ucat}^{\ast}_Q$ has a
principal representation $\dot{\Ucat}^{\ast}_Q(\lambda, *)$ which sends the weight
$\mu$ to the graded category of 1-morphisms $\lambda\to \mu$.

We wish to consider two natural quotients of $\dot{\Ucat}^{\ast}_Q(\lambda, *)$:
\begin{itemize}
\item $\check{\Ucat}^{\lambda,\ast}$ is the quotient of $\dot{\Ucat}^{\ast}_Q(\lambda
  ,*)$ by the subrepresentation generated by
  $\bf{1}_{\lambda+\alpha_i}$ for all $i\in I$.  That is, we set to
  0 any 2-morphism factoring through a 1-morphism of the form $A
  \E_i\onel$ for $A$ arbitrary.
\item ${\Ucat}^{\lambda,\ast}$ is the quotient of
  $\check{\Ucat}^{\lambda,\ast}$ by all positive degree endomorphisms of $\onel$.
\end{itemize}
The categories $\check{\Ucat}^{\lambda,\ast},{\Ucat}^{\lambda,\ast}$ are
automatically idempotent complete, since the degree 0 endomorphisms of
any 1-morphism in $\dot{\Ucat}^{\ast}_Q $ is a finite dimensional algebra
over $\k$, and idempotents in finite dimensional algebras can be
lifted by any surjective map\footnote{We can reduce to the case of
  semi-simple rings, using the fact idempotents modulo the Jacobson radical can
be lifted; for semi-simple rings, this is clear.}.  Such an idempotent
complete category $\mathcal{C}$ can be thought of in terms of the representation
theory of algebras: if we fix an additive generator $X$ of
$\mathcal{C}$ (that is, an object such that every object is a summand
of $X^{\oplus n}$ for some $n$), then the category $\mathcal{C}$ is
equivalent to the finitely generated projective modules over
$A=\End_{\mathcal{C}}(X)$.  By assumption, every object $x$ is the
image of an idempotent $e_x\in \End_{\mathcal{C}}(X^{\oplus n})$, and
the associated projective module is $A^{\oplus n}\cdot e_x$, identifying
$A^{\oplus n}\cong \Hom_{\mathcal{C}}(X, X^{\oplus n})$.

One approach to constructing such a generator is simply to consider
the sum $Y$ of
every monomial in the $\mathcal{E}_i$ and $\mathcal{F}_i$ applied to
$\onel$  (we'll abuse notation and use the same symbol for
$\check{\Ucat}^{\lambda,\ast}$ and ${\Ucat}^{\lambda,\ast}$).
By definition, every object is a summand of a sum of these.  The
resulting endomorphism algebra in ${\Ucat}^{\lambda,\ast}$ is denoted
$DR^{\lambda,\ast}$ in \cite[3.1]{Web5}. The algebra of the same sum of monomials
in $\check{\Ucat}^{\lambda,\ast}$ is obtained in the same way, but only
imposing the relation  \cite[(3.9a)]{Web5}, not \cite[(3.9b-c)]{Web5}
(this algebra is considered in \cite[3.25]{Web5}, but not given a name
or notation).

There is a more ``efficient'' generator, however.  One can easily
confirm using the categorified commutation relation of $\mathcal{E}_i$
and $\mathcal{F}_i$ that just applying monomials in the functors
$\mathcal{F}_i$ will suffice to additively generate the category.  Let $X$ be the sum of all such monomials applied to
$\onel$ (we'll abuse notation as before). Note that the endomorphisms
of these two
additive generators in a given category are Morita equivalent via the bimodule $\Hom(X,Y)$.  The endomorphisms of this generator can also be
described in terms of  the KLR algebra $R=R_Q$.  This has a natural
map $R\to \End_{{\Ucat}^{\lambda,\ast}}(X)$ which is surjective, and a map
$R\to \End_{\check{\Ucat}^{\lambda,\ast}}(X)$, which is close to
surjective; its image together with the bubbles generate these endomorphisms.

Recall that
that $R$ has an algebraic presentation where
$e(\ii)$ are idempotents corresponding to sequences $\ii = (i_1, \dots, i_m)$, and $y_r e(\ii)$ denotes a dot on the $r$th strand.
Let $\lambda$ be a dominant weight, and recall the cyclotomic quotient
$R^{\lambda}$ from \cite{KL1} is defined as the quotient of $R$ by the two sided ideal generated by the relations
\begin{equation}\label{cyc-quotient}
\left\{
 y_1^{ \la i_1, \lambda\ra } e(\ii) =0 \mid \text{for all sequences $\ii$}
\right\}.
\end{equation}
 Likewise, we will also consider
  the deformed cyclotomic quotient $\check{R}^\lambda$ defined in
\cite[Sec. 4.4]{RouQH} and \cite[3.24]{Web5}; this is a quotient of the usual KLR algebra by the
relation
\begin{equation}\label{def-cyc-quotient}
\left(y_1^{\la i_1, \lambda\ra}+q_1^{(i)}y^{\la i_1, \lambda\ra-1}+\dots
+q_{\la i_1, \lambda\ra}^{(i)} \right)e(\mathbf{i})=0
\end{equation}
where each $q_k^{(i)}$ is a free deformation parameter of degree $2k$.
We identify $\k[\{q_k^{(i)}\}]\cong \check{R}^\lambda_\lambda$ with the ring
$\mathbb{A}_\lambda$ by identifying $q_k^{(i)}\mapsto \mathsf{x}_{i,k}$.

Note that the rotation of \eqref{eq_curl} shows that the relations
\eqref{cyc-quotient}  and \eqref{def-cyc-quotient} hold in the
categories ${\Ucat}^{\lambda,\ast}$ and $\check{\Ucat}^{\lambda,\ast}$ (where we
identify $q_k^{(i)}$ with the fake clockwise bubbles with label $i$ of
degree $2k$).  With considerably more effort, one can show that these
relations together with those of $R$ give a presentation of
$\End_{{\Ucat}^{\lambda,\ast}}(X)$ and
$\End_{\check{\Ucat}^{\lambda,\ast}}(X)$ respectively.  In \cite[3.20,
3.25]{Web5}, this result is shown in terms of Morita equivalences
between $R^{\lambda}$ (resp. $\check{R}^\lambda$) and
$\End_{{\Ucat}^{\lambda,\ast}}(Y)$ (resp.
$\End_{\check{\Ucat}^{\lambda,\ast}}(Y)$).

We let $R^{\lambda}_\mu$ (respectively $\check{R}^\lambda_{\mu}$)
 denote the summand of this algebra categorifying
the $\mu$-weight space, that is, that where the labels on strands add
up to $\lambda-\mu$.  The categories of modules over both $R^{\lambda}$ and  $\check{R}^\lambda$ each have a categorical action of $\mf g$, where each $\F_i$ is an induction
functor and $\E_i$ a restriction functor.  From the discussion above,
we have that:

\begin{thm}[\mbox{\cite[6.2]{KK}; \cite[4.25]{RouQH}; \cite[3.20, 3.25]{Web5}}]\label{R-isomorphism}
The category  ${\Ucat}^{\lambda,\ast}(\mu)$
  (resp. $\check{\Ucat}^{\lambda,\ast}(\mu)$) is equivalent to the projective
  modules over the ring ${R}^\lambda_\mu$
  (resp. $\check{R}^\lambda_\mu$).  The Grothendieck groups of
  $\bigoplus_{\mu\leq \lambda}{\Ucat}^{\lambda,\ast}(\mu)$ and $\bigoplus_{\mu\leq \lambda}\check{\Ucat}^{\lambda,\ast}(\mu)$ are
  canonically isomorphic, and both isomorphic to the finite Weyl module $V(\lambda)$.
\end{thm}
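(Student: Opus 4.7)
The plan is to prove this in two stages: first establish the explicit equivalences of categories, and then identify the common Grothendieck group. For the first equivalence, I would construct a $2$-functor $\Phi\maps \check{\Ucat}^{\lambda} \to \check{R}^{\lambda}\text{-pmod}$ (and analogously for the undeformed version) by sending the object $\mu$ to the category of projective modules over $\check{R}^{\lambda}_{\mu}$, the $1$-morphism $\F_{\ii}\onel$ to the standard projective $\check{R}^{\lambda}e(\ii)$, and the generating $2$-morphisms (dots, crossings among downward strands) to the corresponding elements $y_k$ and $\psi_k$ of the KLR algebra. Cups and caps go to unit and counit of the adjunction between induction and restriction functors for cyclotomic KLR algebras, as set up in \cite{KK}.

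The key verification is that this assignment respects the defining relations of $\dot{\Ucat}^{\ast}_Q(\lambda,*)$ \emph{after} passing to the quotient. The nontrivial content is that the cyclotomic relation \eqref{def-cyc-quotient} holds on the image of $\Phi$. For this, one uses the extended $\mf{sl}_2$ relation \eqref{eq_curl}: a leftmost curl on a downward strand equals a sum of dotted bubbles attached to the rightmost region, and after the subrepresentation generated by $\onenn{\lambda+\alpha_i}$ is quotiented out, any $1$-morphism $A \E_i \onel$ is zero. The polynomial in $y_1$ appearing in \eqref{def-cyc-quotient} then emerges from the infinite Grassmannian equation \eqref{eq_infinite_Grass}, with the positive degree $\onel$-bubbles playing the role of the deformation parameters $q_k^{(i)}$ (of matching degree $2k$). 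Killing these bubbles recovers exactly relation \eqref{cyc-quotient}, giving the undeformed version.

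For essential surjectivity, every indecomposable projective over $\check{R}^{\lambda}_{\mu}$ is a summand of some $\check{R}^{\lambda}_{\mu} e(\ii)$, hence lies in the image. The hard part will be full faithfulness, which requires a basis theorem for the $2$-morphism spaces: one must show that in the quotient $\check{\Ucat}^{\lambda}$, any $2$-morphism $\F_{\ii}\onel \to \F_{\jj}\onel$ can be reduced, using the relations of $\Ucat^{\ast}_Q(\mf{g})$ together with the quotient relation, to a $\Bbbk$-linear combination of diagrams with only downward strands (KLR-type diagrams) decorated by $\onel$-bubbles on the far right. Every upward strand can be bent back using biadjunction \eqref{eq_biadjoint1}--\eqref{eq_biadjoint2}, and the resulting cups/caps/sideways crossings can be eliminated modulo lower-order terms by the curl and bubble-slide relations of Section \ref{sec_sym-bub}. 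One then matches this spanning set with Webster's basis theorem for $\check{R}^{\lambda}_{\mu}$ in \cite{Web5}, establishing that $\Phi$ is an isomorphism on hom-spaces.

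Finally, the Grothendieck group identification follows by transporting the $\mf{g}$-action. The induction/restriction functors on $\check{R}^{\lambda}\text{-pmod}$ (and similarly on $R^{\lambda}\text{-pmod}$) satisfy the integrable highest weight module axioms at the level of $K_0$, as proven by Kang--Kashiwara~\cite{KK}, with highest weight vector $[\onel]$ of weight $\lambda$. Since the deformation parameters affect only internal grading structure and not the number of isomorphism classes of indecomposable projectives in each weight space, the Grothendieck groups of $\bigoplus_{\mu \leq \lambda}\Ucat^{\lambda}(\mu)$ and $\bigoplus_{\mu \leq \lambda}\check{\Ucat}^{\lambda}(\mu)$ agree. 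A character/dimension count shows each weight space has dimension matching that of $V(\lambda)$, completing the identification. The main conceptual obstacle throughout is the basis theorem underlying full faithfulness, since it must simultaneously handle the $2$-categorical diagrammatics and the cyclotomic quotient relations uniformly in $\mu$.
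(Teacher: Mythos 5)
This theorem carries the citation \cite[3.20, 3.25]{Web5} in its label: the paper does not prove it, it imports it wholesale from Webster's work. So there is no ``paper's own proof'' to compare against, and your proposal should be read as a reconstruction of the argument in \cite{Web5} rather than of anything in the present paper. One immediate observation: the theorem statement as printed in the paper appears to have the parentheticals swapped (it pairs $\Ucat^\lambda$ with $\check{R}^\lambda$ and $\check{\Ucat}^\lambda$ with $R^\lambda$); the intended pairing, consistent with the introduction and with the proofs of Theorems \ref{local-isomorphism} and \ref{global-isomorphism} later in the paper, is $\Ucat^\lambda(\mu)\simeq R^\lambda_\mu\text{-pmod}$ and $\check{\Ucat}^\lambda(\mu)\simeq\check{R}^\lambda_\mu\text{-pmod}$. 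Your proposal silently uses the correct pairing, which is good, but you should flag the discrepancy rather than paper over it.

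Your high-level outline is consistent with what Webster actually does: identify the endomorphism algebra of $\bigoplus_{\ii}\F_{\ii}\onel$ in $\check{\Ucat}^\lambda$ with $\check{R}^\lambda_\mu$, use the curl relation \eqref{eq_curl} to see that the deformed cyclotomic relation \eqref{def-cyc-quotient} arises once $\E_i\onel$ is killed, then pass to Karoubi envelopes, and finally transport the $K_0$-level integrability via the Kang--Kashiwara construction. Two caveats. First, the logic around the cyclotomic relation is phrased slightly backwards: you do not need to verify \eqref{def-cyc-quotient} ``in the image of $\Phi$'' (it holds by definition in $\check{R}^\lambda$); rather, you need to show (a) the representation of $\dot{\Ucat}^\ast_Q(\lambda,*)$ on $\check{R}^\lambda\text{-pmod}$ kills the subrepresentation generated by $\onenn{\lambda+\alpha_i}$, so the functor descends to $\check{\Ucat}^\lambda$, and separately (b) the downward-diagram part of $\End_{\check{\Ucat}^\lambda}(\F_{\ii}\onel)$ already satisfies the cyclotomic relation, so the induced map into $\check{R}^\lambda_\mu$ is well-defined and not merely onto a larger quotient. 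Second, and more seriously, you explicitly acknowledge that full faithfulness rests on a basis theorem, which you defer to Webster's paper. That basis theorem is not a minor lemma: it \emph{is} the content of \cite[3.20, 3.25]{Web5}. As a self-contained proof your proposal is therefore circular at the crucial step, and as a summary of the cited argument it is adequate but should say clearly that the basis theorem is the cited result being used, not something that falls out of ``matching spanning sets.''
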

On the algebra $R^{\lambda}_\mu$ there is a trace function $\operatorname{tr}_\lambda$
given by  connecting the top and bottom of the diagram
as shown in \cite[3.18]{Web5}, and evaluating this diagram in  the 2-category $\dot{\Ucat}^{\ast}_Q$, as described in
\cite{BHLW}.  This is slightly different from the trace defined first
in \cite{Web5}, since we used a different duality functor on this 2-category, but it agrees with the symmetric modification described
in \cite[Rk. 3.19]{Web5}.

The same definition defines a trace map
$\check{\operatorname{tr}}_\lambda\colon \check{R}^{\lambda}_\mu\to
\mathbb{A}_\lambda$.  This is a Frobenius trace if and only if its
reduction modulo the unique maximal graded ideal of $\mathbb{A}_\lambda$
is, i.e. if $\operatorname{tr}_\lambda$ is.
We'll also require the result:
\begin{thm}[\mbox{\cite[3.18]{Web5}}]\label{frobenius}
   The trace $\operatorname{tr}_\lambda$
   (resp. $\check{\operatorname{tr}}_\lambda$) makes $R^\lambda$ (resp. $\check{R}^{\lambda}$) into a
   symmetric Frobenius algebra over $\k$ (resp. $\mathbb{A}_\lambda$).
\end{thm}

Note that this implies that these categorical modules are integrable
in the sense of Chuang-Rouquier: any object $M$ is killed by
$E_i^m$ or $F_i^m$ for $m\gg 0$, since the resulting object lies in a
trivial weight space.

Then we have a composite of surjections
\[
 \xymatrix{
\dot\curg \ar[r] &
 \Tr(\Ucat(\mf{g})) \ar[r] &
 \Tr(\check{\Ucat}^{\lambda,\ast}(\mf{g})) \ar[r] &
 \Tr(\Ucat^{\lambda,\ast}(\mf{g})).
 }
\]

\begin{prop}\label{Weyl-surject}
There are surjective homogeneous maps of  $\mathbf{U}(\mathfrak{g}[t])$-modules
\[
W(\lambda) \longrightarrow \Tr(\Ucat^{\lambda,\ast}(\mf{g})) \qquad \mathbb{W}(\lambda) \longrightarrow \Tr(\check{\Ucat}^{\lambda,\ast}(\mf{g})).
\]
(More generally, the result holds for any choice of scalars as in Proposition~\ref{prop-cur}.)
\end{prop}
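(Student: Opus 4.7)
The plan is to apply the universal property of the Weyl modules to the candidate highest weight vector $w := [1_{\onel}]$, viewed in $\Tr(\Ucat^\lambda(\lambda))$ and respectively $\Tr(\check{\Ucat}^\lambda(\lambda))$ as the class of the identity 2-endomorphism of the identity 1-morphism $\onel$. By Proposition~\ref{prop-cur} together with the module structure described in Section~\ref{sec:2functor}, both traces acquire a graded $\mathbf{U}(\g[t])$-module structure, and the sought-after maps will send the generator $w_\lambda$ to $w$.

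Next I would verify the defining relations~\eqref{eq:1} common to both Weyl modules. The equality $\xi_{i,0}\cdot w = \la i,\lambda\ra w$ is immediate from $\rho(\xi_{i,0}) = [p_{i,0}(\lambda)\Id_{\onel}]$ with $p_{i,0}(\lambda) = \la i,\lambda\ra$. The relation $\g^{+}[t]\cdot w = 0$ follows because $\rho(x^+_{i,r}) = \mathsf{E}_{i,r}$ sends $\Tr(\Ucat^\lambda(\lambda))$ into $\Tr(\Ucat^\lambda(\lambda+\alpha_i))$, and the latter vanishes by construction of the quotient (in which $\onenn{\lambda+\alpha_i}$ is killed). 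For the integrability relation $(x^-_{i,0})^{m_i+1}\cdot w = 0$, unpacking the action shows the left-hand side is $[1_{\F_i^{m_i+1}\onel}] = h([\F_i^{m_i+1}\onel])$ where $h$ is the Chern character; under the identification $K_0(\Ucat^\lambda) \cong V(\lambda)$ of Theorem~\ref{R-isomorphism}, this class corresponds to $(x^-_i)^{m_i+1}v_\lambda$, which vanishes in $V(\lambda)$ by~\eqref{eq:0}. This yields the global Weyl module surjection. For the local case one additionally needs $\mathfrak{h}t[t]\cdot w = 0$: for $r > 0$, the element $\rho(\xi_{i,r}) = \mathsf{H}_{i,r}$ is represented by a degree-$2r$ endomorphism of $\onel$, which is killed in $\Ucat^\lambda$ (but not in $\check{\Ucat}^\lambda$) by construction. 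Homogeneity of both maps is automatic, since $x^\pm_{i,r}$ and $r$ dots on a single strand both have degree $2r$.

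Finally, surjectivity reduces to showing $w$ generates the trace as a $\mathbf{U}(\g[t])$-module. By Lemma~\ref{r1}, any class in $\Tr(\Ucat^\lambda(\mu))$ (or $\Tr(\check{\Ucat}^\lambda(\mu))$) is represented by a 2-endomorphism of some 1-morphism $\F_\ii\onel$, since $\E_i\onel = 0$ in the quotient forces generating 1-morphisms to involve only $\F$'s. Viewing such an endomorphism as a 2-morphism $\sigma\maps\F_\ii\Rightarrow\F_\ii$ in $\Ucat^*$, the action formula of Section~\ref{sec:2functor} gives $[\sigma]\cdot[1_\onel] = [\sigma]$, so $w$ generates as a $\Tr(\Ucat^*)$-module and hence, by the surjectivity of $\rho$ (Theorem~\ref{rho-surjective}), as a $\mathbf{U}(\g[t])$-module. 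The main obstacle is the integrability relation, which is not a direct calculation in the 2-category but requires transferring the vanishing of $(x^-_i)^{m_i+1}v_\lambda \in V(\lambda)$ into the trace via compatibility of the Chern character with the categorified $\mathbf{U}(\g)$-action.
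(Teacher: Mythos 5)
Your proof is correct and follows the same overall strategy as the paper's: send the cyclic generator $w_\lambda$ to $w=[1_{\onel}]$, verify the defining relations, and show that $w$ generates the trace as a $\mathbf{U}(\g[t])$-module (the highest-weight conditions, the vanishing of higher-degree bubbles in $\Ucat^\lambda$ for the local case, and the generation argument via $[\sigma]\cdot[1_\onel]=[\sigma]$ together with Theorem~\ref{rho-surjective} are all as in the paper). The one genuine point of departure is how you handle the relation $(x^-_{i,0})^{m_i+1}w=0$. You verify it directly: the action gives $[1_{\F_i^{m_i+1}\onel}]=h_\C([\F_i^{m_i+1}\onel]_\cong)$, and $[\F_i^{m_i+1}\onel]_\cong$ vanishes under $K_0(\Ucat^\lambda)\cong V(\lambda)$; since the split Grothendieck group of a Krull--Schmidt category of projectives is faithful on isomorphism classes, $\F_i^{m_i+1}\onel\cong 0$ and so the class is zero. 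The paper instead observes that the trace is an \emph{integrable} $\mathbf{U}(\g[t])$-module (each object of $\Ucat^\lambda$ lives in a weight space bounded by $\lambda$) and cites the universal property of Weyl modules among integrable modules generated by a singular vector, as in \cite[(3.5)]{CFK}, which makes the explicit verification of that relation unnecessary. Your route is more concrete and self-contained but hinges on the faithfulness-on-objects of $K_0$ in the Krull--Schmidt setting; the paper's route is shorter but outsources the key point to the cited characterization. One small refinement worth recording: over a field of positive characteristic it is cleaner to establish the divided-power version $(x^-_{i,0})^{(m_i+1)}w=0$, i.e.\ that the primitive idempotent in $\End(\F_i^{m_i+1}\onel)$ has vanishing trace class, which follows from $\F_i^{(m_i+1)}\onel\cong 0$; your $K_0$ argument gives this too, but stating it via the divided power avoids multiplying through by $(m_i+1)!$.
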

In other types, one can give an analogous definition of a Weyl module, and the same proof as below shows it surjects onto $\Tr(\check{\Ucat}^{\lambda,\ast}(\mf{g}))$.  We have not included any discussion of this case since in contrast to the ADE case, we know almost nothing about its structure.  
\begin{proof}
The trace of $\Ucat^{\lambda,\ast}(\mf{g})$ or
$\check{\Ucat}^{\lambda,\ast}(\mf{g})$ is an integrable representation of the
current algebra $\curg$.  This representation is generated as a
$\curg$-module by the trace of the empty diagram in weight
$\lambda$, which is homogeneous of degree 0.   Furthermore, the cyclotomic relation \eqref{cyc-quotient} or \eqref{def-cyc-quotient} implies that $\mf{n}^+$ acts trivially on this vector.
By the presentation \cite[(3.5)]{CFK}, any integrable $\mf{g}[t]$-module $M$ generated
by an element $m \in M$ satisfying the relations
\begin{equation}
 \mf{n}^+[t]m  =0, \qquad hm=\lambda(h)m\label{global-rels}
\end{equation}
is a quotient of the global Weyl module $\mathbb{W}(\lambda)$.   This
shows the result for $\Tr(\check{\Ucat}^{\lambda,\ast}(\mf{g}))$.  In
$\Ucat^{\lambda,\ast}(\mf{g})$, by definition, all higher degree bubbles
act trivially on $v$.  Thus, the surjection of the global Weyl module factors
through the quotient by the relation $\mf{h}t[t]v=0$.  That is, $V$
is a quotient of the local Weyl module $\Tr(\check{\Ucat}^{\lambda,\ast}(\mf{g}))$.
\end{proof}

%
\subsection{Injectivity of $\rho$}
%

In this section, we will prove an injectivity result.  

\begin{thm}\label{local-isomorphism}
The surjective map $W(\lambda) \longrightarrow \Tr(\Ucat^{\lambda,\ast}(\mf{g}))$ is an isomorphism.
\end{thm}
As mentioned above, we could define an analogous map for $\mf g$ of infinite type.  We have no evidence in either direction for  whether $\rho$ has non-trivial
kernel in this more general case.
\begin{proof}
Let $\lambda_{\rm {min}}$ be the unique minimal dominant weight amongst those
$\leq \lambda$.  If $\lambda$ lies in the root lattice, then $\lambda_{\rm min}=0$; if
$\lambda$ does not lie in the root lattice, then $\lambda_{\rm {min}}$ will be
the unique
highest weight of a minuscule representation in that coset of the root
lattice.  If
$\mf{g}=\mathfrak{sl}_n$, then $V(\lambda_{\rm {min}})=\bigwedge{}^{\!\!k}\mathbb{C}^n$
where $0\leq k <n$ is chosen so that the scalar matrix $e^{2\pi
  i/n}I\in \operatorname{SL}(n)$
acts by $e^{2\pi i k/n}$ on $V(\lambda)$.  For $\mf{g}=\mathfrak{so}_{2n}$,
we have that
\begin{itemize}
\item $V(\lambda_{\rm {min}})=\mathbb{C}$ is the trivial representation if $V(\lambda)$ is a representation of
  $\operatorname{SO}(2n)$ on which $-I\in \operatorname{SO}(2n)$ acts
  trivially,
\item $V(\lambda_{\rm {min}})=\mathbb{C}^{2n}$ is the vector representation if $V({\lambda})$ is a representation of
  $\operatorname{SO}(2n)$ on which $-I\in \operatorname{SO}(2n)$ acts
  by $-1$,
\item $V(\lambda_{\rm {min}})=S^{\pm}$ is one of the two half-spinor representations if
  $V(\lambda)$ is a representation of $\operatorname{Spin}(2n)$ not
  factoring through $\operatorname{SO}(2n)$ (determined by having the
  same action of the center of $\operatorname{Spin}(2n)$ as $V(\lambda)$).
\end{itemize}
  By \cite[3.9]{KN}, the socle filtration of the local Weyl module of
  type $\lambda$ coincides with the degree filtration.
In particular, by \cite[3.14]{KN},
  the socle itself is given by the homogeneous elements of degree $\langle \lambda,\lambda\rangle-\langle
  \lambda_m,\lambda_m\rangle$ and this is a single copy of the simple
  module $V(\lambda_m)$.   By its universal property, any
  graded module $M$ over the current algebra which is generated by a single
  highest weight element of weight $\lambda$ and degree 0 receives a
  surjective map from the local Weyl module.  Thus, if $M$ also
  contains a non-zero element of degree $\langle \lambda,\lambda\rangle-\langle
  \lambda_{\rm {min}},\lambda_{\rm {min}}\rangle$, this map is not zero on the socle of
  the Weyl module.
Since the Weyl module has finite length, every submodule contains a
simple submodule, which lies in the socle by definition.
By \cite[3.8]{KN}, the socle of $W(\lambda)$ is simple, so any non-zero submodule
  of $W(\lambda)$ contains $\operatorname{soc}(W(\lambda))$.  However,
 the kernel of the
  map to $M$ does not contain this submodule, and thus is 0.

By Proposition \ref{Weyl-surject},
  $\Tr(\Ucat^{\lambda,\ast}(\mf{g}))$ is generated by such an element.
  Furthermore, using the isomorphism of Theorem \ref{R-isomorphism},
  the symmetric Frobenius trace $\operatorname{tr}_\lambda$ of Theorem
  \ref{frobenius} on the algebra $R^\lambda_{\lambda_{\rm {min}}}$ induces a non-zero
  functional on $\Tr(\Ucat^{\lambda,\ast}(\mf{g}))$ of degree $-\langle \lambda,\lambda\rangle+\langle
  \lambda_{\rm {min}},\lambda_{\rm {min}}\rangle$, which shows that this space has
  non-zero elements of degree $\langle \lambda,\lambda\rangle-\langle
  \lambda_{\rm {min}},\lambda_{\rm {min}}\rangle$.  Thus, we must have the desired isomorphism.
\end{proof}

The map
$\mathbb{W}(\lambda) \longrightarrow
\Tr(\check{\Ucat}^{\lambda,\ast}(\mf{g}))$ induces a surjective
homogeneous ring homomorphism
$\mathbb{A}_\lambda\to \check{R}^\lambda_\lambda$.
By the definition \cite[3.24]{Web5}, $\check{R}^\lambda_\lambda$ is a polynomial ring
over the fake bubbles.  Thus, these rings have the same Hilbert
series, and this map must be an isomorphism.

\begin{thm}\label{global-isomorphism}
For $\mf{g}$ of type ADE, the surjective map $\mathbb{W}(\lambda) \longrightarrow \Tr(\check{\Ucat}^{\lambda,\ast}(\mf{g}))$ is an isomorphism.
\end{thm}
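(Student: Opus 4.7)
The plan is to bootstrap Theorem \ref{local-isomorphism} to the global case using the polynomial ring $\mathbb{A}_\lambda$, which acts on both sides of the proposed isomorphism. The paragraph preceding the statement already identifies $\mathbb{A}_\lambda \cong \check{R}^\lambda_\lambda = \End_{\check{\Ucat}^\lambda(\mf{g})}(\onel)$, so $\mathbb{A}_\lambda$ acts on $\Tr(\check{\Ucat}^\lambda(\mf{g}))$ by horizontal composition with bubbles in weight $\lambda$ (as closed diagrams slide freely through any 2-endomorphism in the trace). Direct inspection shows the surjection $\mathbb{W}(\lambda)\twoheadrightarrow\Tr(\check{\Ucat}^\lambda(\mf{g}))$ is $\mathbb{A}_\lambda$-equivariant, where $\mathbb{A}_\lambda$ acts on $\mathbb{W}(\lambda)$ by the classical right action factoring through $\mathbf{U}(\mathfrak{h}[t])$.

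I would next invoke the theorem (due to Chari-Pressley, Chari-Loktev, and Naoi in type ADE) that $\mathbb{W}(\lambda)$ is free over $\mathbb{A}_\lambda$ of rank $\dim_\Bbbk W(\lambda)$; in particular, $\mathbb{W}(\lambda)\cong\mathbb{A}_\lambda\otimes_\Bbbk W(\lambda)$ as graded $\mathbb{A}_\lambda$-modules. On the categorified side, Theorem \ref{R-isomorphism} gives $\Tr(\check{\Ucat}^\lambda(\mf{g}))\cong\bigoplus_\mu\HH_0(\check{R}^\lambda_\mu)$. By \cite[3.24]{Web5}, the deformation parameters $q_k^{(i)}$ are central and algebraically independent, and $\check{R}^\lambda$ is a flat deformation of $R^\lambda$ over $\mathbb{A}_\lambda$. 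Centrality of $\mathbb{A}_\lambda$ yields $[\check{R}^\lambda,\check{R}^\lambda]=\mathbb{A}_\lambda\cdot[R^\lambda,R^\lambda]$ (after choosing a splitting $\check{R}^\lambda\cong R^\lambda\otimes_\Bbbk\mathbb{A}_\lambda$ as $\mathbb{A}_\lambda$-modules), whence
\[
\HH_0(\check{R}^\lambda)\cong\mathbb{A}_\lambda\otimes_\Bbbk\HH_0(R^\lambda)\cong\mathbb{A}_\lambda\otimes_\Bbbk W(\lambda),
\]
where the second identification is Theorem \ref{local-isomorphism}.

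Both sides of the surjection are therefore free graded $\mathbb{A}_\lambda$-modules of rank $\dim_\Bbbk W(\lambda)$ with identical Hilbert series, forcing the graded surjection to be an isomorphism. The main obstacle is verifying the splitting $\HH_0(\check{R}^\lambda)\cong\mathbb{A}_\lambda\otimes_\Bbbk\HH_0(R^\lambda)$; while $\mathbb{A}_\lambda$-centrality is immediate from \cite[3.24]{Web5}, promoting centrality plus flatness into a compatible decomposition of the commutator space requires some care. A safer fallback is to apply graded Nakayama to the kernel $K$ of the surjection: since $\mathbb{W}(\lambda)$ is $\mathbb{A}_\lambda$-free, $K$ is finitely generated in each graded degree, and reducing modulo $\mathbb{A}_\lambda^+$ recovers the kernel of $W(\lambda)\to\Tr(\Ucat^\lambda(\mf{g}))$, which vanishes by Theorem \ref{local-isomorphism}. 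Combining $K=\mathbb{A}_\lambda^+K$ with positive grading then gives $K=0$.
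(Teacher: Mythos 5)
Your fallback graded--Nakayama argument has a genuine gap. Writing $K=\ker(\mathbb{W}(\lambda)\to T)$ with $T=\Tr(\check{\Ucat}^\lambda(\mf{g}))$, the exact sequence $0\to K\to\mathbb{W}(\lambda)\to T\to0$ is only \emph{right} exact after $-\otimes_{\mathbb{A}_\lambda}\Bbbk$. So the composite $K/\mathbb{A}_\lambda^+K\to W(\lambda)\to T/\mathbb{A}_\lambda^+T$ being zero (which does follow from Theorem \ref{local-isomorphism}) gives only $K\subseteq\mathbb{A}_\lambda^+\mathbb{W}(\lambda)$, i.e.\ the \emph{image} of $K/\mathbb{A}_\lambda^+K$ in $W(\lambda)$ vanishes. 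It does not give $K/\mathbb{A}_\lambda^+K=0$: the map $K/\mathbb{A}_\lambda^+K\to W(\lambda)$ need not be injective, precisely because $\operatorname{Tor}_1^{\mathbb{A}_\lambda}(T,\Bbbk)$ has not been controlled. A minimal example: take $\mathbb{A}=\Bbbk[x]$, $M=\mathbb{A}$, $N=\Bbbk$ placed in degree $0$, and the evaluation map $M\to N$. Then $K=x\mathbb{A}\neq0$ even though the induced map $M/\mathbb{A}^+M\to N/\mathbb{A}^+N$ is an isomorphism. So ``reducing mod $\mathbb{A}_\lambda^+$ recovers the kernel'' is the false step, and the argument cannot rule out $T$ being a proper quotient of a free module whose reduction still equals $W(\lambda)$. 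Your first approach (decomposing $\HH_0(\check{R}^\lambda)$ via a chosen $\mathbb{A}_\lambda$-linear splitting of $\check{R}^\lambda\cong R^\lambda\otimes\mathbb{A}_\lambda$) suffers from the problem you already flag: such a splitting is not an algebra map, so it does not carry $[\check{R}^\lambda,\check{R}^\lambda]$ to $\mathbb{A}_\lambda\cdot[R^\lambda,R^\lambda]$.

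The paper sidesteps both problems by working at \emph{generic} maximal ideals of $\mathbb{A}_\lambda$ rather than at the origin. There, \cite[5.8]{CFK} identifies the specialization of $\mathbb{W}(\lambda)$ with a tensor product of shifted local Weyl modules for fundamental weights with multiplicities $m_i=\la i,\lambda\ra$, and \cite[2.23]{wKLR} gives the matching factorization of the generic specialization of $\check{R}^\lambda$ into shifted cyclotomic quotients $R^{\omega_i}$; applying Theorem \ref{local-isomorphism} to each fundamental weight then shows the two $\mathbb{A}_\lambda$-modules have the same generic rank, and since the source is torsion-free over the domain $\mathbb{A}_\lambda$, the surjection must be an isomorphism. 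The point you are missing is precisely this passage to generic specialization: equality of fibers at the origin is not enough, but equality of generic fibers, together with torsion-freeness of the source, is. Your first approach could be made to work if you replaced the $\mathbb{A}_\lambda$-linear splitting argument by this generic-fiber comparison; as written, neither of your two routes closes the gap.
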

\begin{proof}
  The induced map is an isomorphism modulo the unique maximal
  homogeneous ideal by Theorem \ref{local-isomorphism}.  At a generic
  maximal ideal, the specialization of $\mathbb{W}(\lambda)$ is
  isomorphic to a tensor product of shifted local Weyl modules for
  fundamental weights, with $\omega_i$ appearing with multiplicity
  $m_i=\la i,\lambda\ra$ by \cite[5.8]{CFK}.  Similarly, the
  specialization of $\check{R}^\lambda$ is Morita equivalent to a
  tensor product of shifted cyclotomic quotients for fundamental
  weights with $R^{\omega_i}$ appearing with multiplicity $m_i$ by
  \cite[2.28]{wKLR}. Thus, the same is true on the level of traces.
  Theorem \ref{local-isomorphism} applied to the fundamental weights
  shows that these modules over
  $\mathbb{A}_\lambda\cong \check{R}^\lambda_\lambda$ have the same
  generic rank.  Thus, any surjective map from one to the other is
  necessarily an isomorphism.
\end{proof}

These results were proven independently in \cite[3.34]{SVV} with
essentially the same proof; these are ultimately an algebraic version
of the argument of \cite[6.1]{KN}.

%
\section{Trace decategorification results}
%

%
\subsection{A trace decategorification of ${\U_Q}(\mf{g})$}
%

Now, we return to considering the trace of the 2-category
${\U_Q}(\mf{g})$ for $\mf{g}$ general (not just ADE type).  The categories ${\Ucat}^{\lambda,\ast}(\mu)$
and $\check{\Ucat}^{\lambda,\ast}(\mu)$ have ``unstarred'' versions
${\Ucat}^{\lambda}(\mu)$ and  $\check{\Ucat}^{\lambda}(\mu)$,
given by the same quotient of ${\U_Q}(\mf{g})$.  These are equivalent
to the graded categories of graded projective $R^\lambda$-modules or $\check{R}^\lambda$-modules
with morphisms being degree 0 module maps.

Taking the trace of these categories results in a somewhat different
structure from the categories we considered earlier: there's a natural 2-functor ${\U_Q}(\mf{g})\to
{\U_Q^*}(\mf{g})$ sending 2-morphisms to degree 0
2-morphisms\footnote{The reader should think of this as analogous to
  the functor from the category of graded modules over a ring with degree
  0 maps as morphisms, to the category with the same objects, but all
  module maps as morphisms. For any 2-morphism $x\colon u\to v$ which is homogeneous of a different degree
$a$, we can compose with a grading shift isomorphism $v\cong v\la
-a\ra$ to obtain a degree 0 2-morphism, which is in the image of this
map.}.

 This
induces a functor on traces $\Tr({\U_Q}(\mf{g}))\to
\Tr({\U_Q^*}(\mf{g}))$.  The image of this functor can only include
degree 0 morphisms, and so misses much of the structure of the trace.  On the other hand, as discussed in Section
\ref{sec:Ustar}, this is a map of $\Z[q,q^{-1}]$ modules with $q-1$
acting trivially on $\Tr({\U_Q^*}(\mf{g}))$, so the behavior of $q$ is
lost in the map.

Recall that in Section \ref{sec:Weyl}, we defined the quantum (finite)
Weyl module $V_q(\l)$; we let $V_q(\l)_\mu$ denote the $\mu$ weight
space of this representation.
Since only a few results in the paper hold in this generality, let us
note that the theorem below holds for all symmetrizable Cartan data,
and choice of scalars $Q$:
\begin{thm} \label{thm:ungraded}
  The Chern character maps
  \begin{align}
h_{\dot{\U}_Q(\mf{g})}&\colon K_0^\k(\dot{\U}_Q(\mf{g})
  )\cong \dot{\bfU}_q(\g)\to \Tr(\U_Q(\mf{g})) \label{eq:8.1}\\
h_{{\Ucat}^{\lambda}(\mu)}&\colon K_0^\k({\Ucat}^{\lambda}(\mu)
  )\cong V_q(\l)_\mu\to \Tr({\Ucat}^{\lambda}(\mu)) \label{eq:8.2}\\
h_{{\check \Ucat}^{\lambda}(\mu)}&\colon K_0^\k({\check \Ucat}^{\lambda}(\mu)
  )\cong V_q(\l)_\mu\to \Tr({\check\Ucat}^{\lambda}(\mu)) \label{eq:8.3}
  \end{align}
are isomorphisms.
\end{thm}

\begin{proof}
  First, we'll show \eqref{eq:8.2}; the argument for \eqref{eq:8.3} is
  identical.  The isomorphism
  $K_0^\k({\Ucat}^{\lambda}(\mu) )\cong V_q(\l)_\mu$ follows from
  \cite[3.21]{Web5} and \cite[6.2]{KK}, after tensoring with $\k$. By Lemma
\ref{lem:Chern-injective},  the map $h_{{\Ucat}^{\lambda}(\mu)}$ is
injective. On the other hand, the argument of Theorem \ref{thm:minus-surjection}
shows that $ \Tr({\Ucat}^{\lambda}(\mu)) $ is spanned by polynomial
endomorphisms.  The only polynomial endomorphisms of degree 0 are the
idempotents $e_\ii$, which lie in the image of $
h_{{\Ucat}^{\lambda}(\mu)}$, so this map is surjective as well.

For \eqref{eq:8.1}, the argument is essentially the same.
We have an isomorphism
$\dot{\bfU}_q({\mf g}) \cong K_0^\k(\dot{\U}_Q({\mf g}))$ as
established in \cite[Th. A]{Webunfurl}.
By Lemma
\ref{lem:Chern-injective},  the map $h_{\dot{\U}_Q(\mf{g})}$ is injective.
On the other
  hand, the elements $\mathsf{E}_{i,0}^{(n)}1_{\lambda},\mathsf{F}_{j,0}^{(n)}1_{\lambda}$
are the only degree 0 elements of
  the generating set  of Corollary
  \ref{generating-set}, with all others of positive degree.  Thus,
  they must generate $\Tr(\U_Q(\mf{g}))$.
The elements
  $\mathsf{E}_{i,0}^{(n)}1_{\lambda},\mathsf{F}_{j,0}^{(n)}1_{\lambda}$
  are given by idempotents, and thus obviously in the image of
  $h_{\dot{\U}_Q(\mf{g})}$.  This shows the map is surjective.
\end{proof}
\begin{rem}
Note that in the case where $\g$ is of finite type (not necessarily
symmetric), we can infer the isomorphism $\dot{\bfU}_q({\mf g}) \cong
K_0^\k(\dot{\U}_Q({\mf g}))$ from \eqref{eq:8.2}.  A surjective map $\dot{\bfU}_q({\mf g}) \to
K_0^\k(\dot{\U}_Q({\mf g}))$ is described in \cite{KL3}, and the
kernel of this map must act trivially on every (finite) Weyl module
since these factor through the map to $K_0^\k(\dot{\U}_Q({\mf g}))$,
which is impossible.  Note that this argument does not work outside
finite type, since highest weight modules must have weights that lie
inside the Tits cone.  Instead, one must consider a categorification
of the tensor product of a highest and a lowest weight module, as in
\cite{Web4,Webunfurl}.  The same arguments as Theorem \ref{thm:ungraded} can
easily be extended to this case as well.
\end{rem}

The special cases of Theorem \ref{thm:ungraded} for
$\mf{g}=\mf{sl}_2,\mf{sl}_3$ are proved in \cite{BHLZ,Marko}, where
the Chern character maps are defined over $\mathbb{Z}$.

\begin{rem}
The notion of a strongly upper-triangular category was defined in \cite[Section 4.1]{BHLZ}.  Such categories possess a distinguished basis of objects $B$ and the results of \cite[Proposition 4.6]{BHLZ} imply that $\Tr(\modC)=\HH_0(\modC) \cong \Bbbk B$ and that all higher Hochschild homology vanishes, $\HH_i(\modC) =0$, for $i>0$.  It follows from results in \cite{Web4} that
the basis of indecomposables in  $\U_Q(\mf{g})$ is strongly
upper-triangular, if $\k$ has characteristic $0$ and $v_{ij}=(-1)^{a_{ij}}$.
Hence, Theorem \ref{thm:ungraded} can be extended to include the fact
that $\HH_i(\U_Q(\mf{g})) = 0$ for $i>0$ under the same hypotheses.
\end{rem}

%
\subsection{A trace decategorification of $\U^\ast_Q(\g)$}
%

 \begin{thm}\label{thm:graded}
Assume $\mf{g}$ is of type ADE, then the linear functor
\begin{equation} 
 \rho \maps \dcurkg\longrightarrow \Tr(\U^\ast_Q(\mf{g})),
\end{equation}
is an isomorphism.
\end{thm}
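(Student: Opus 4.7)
Since $\rho$ is surjective by Theorem \ref{rho-surjective}, it suffices to prove that $\rho$ is faithful on each morphism space $\dot{\mathbf U}(\mathfrak g[t])(\mu,\nu)$. The plan is to use the 2-representations $\check{\mathcal U}^\lambda$, as $\lambda$ ranges over all dominant weights, as a separating family of test modules.

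The first step is to observe that each 2-representation $\check{\mathcal U}^\lambda$ of $\mathcal U^*$ descends, via the trace functoriality of Section \ref{sec:2functor}, to an action of $\Tr(\mathcal U^*)$ on the vector space $\bigoplus_\mu \Tr(\check{\mathcal U}^\lambda(\mu))$. Pulling this back along $\rho$ yields an action of $\dot{\mathbf U}(\mathfrak g[t])$ on this space. A direct comparison on the generators $\mathsf E_{i,r}$, $\mathsf F_{j,s}$, $\mathsf H_{i,r}$ of Proposition \ref{prop-cur} shows that, under the identification $\Tr(\check{\mathcal U}^\lambda) \cong \mathbb W(\lambda)$ of Theorem \ref{global-isomorphism}, this action coincides with the standard action of the idempotented current algebra on the global Weyl module. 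This is essentially a bookkeeping exercise: a dot at height $r$ on an upward $i$-strand represents $x^+_{i,r}$, the analogous downward dotted strand represents $x^-_{i,r}$, and the bubble expression \eqref{eq_defpil} represents $\xi_{i,r}$.

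Now suppose $\rho(f) = 0$ for some $f \in \dot{\mathbf U}(\mathfrak g[t])(\mu,\nu)$. Functoriality gives a commutative diagram in which the composite
\[
\dot{\mathbf U}(\mathfrak g[t])(\mu,\nu) \xrightarrow{\ \rho\ } \Tr(\mathcal U^*)(\mu,\nu) \longrightarrow \Hom_\Bbbk\!\big(\Tr(\check{\mathcal U}^\lambda(\mu)),\Tr(\check{\mathcal U}^\lambda(\nu))\big)
\]
agrees with the action $\dot{\mathbf U}(\mathfrak g[t])(\mu,\nu) \to \Hom_\Bbbk(\mathbb W(\lambda)_\mu, \mathbb W(\lambda)_\nu)$ on the global Weyl module. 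Thus $f$ annihilates $\mathbb W(\lambda)_\mu$ for every dominant weight $\lambda$, and it remains to deduce $f = 0$ from a suitable strengthening of Lemma \ref{lem:nokill}.

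The main obstacle is supplying that strengthening, since Lemma \ref{lem:nokill} only rules out elements of the non-idempotented algebra $\mathbf U(\mathfrak g[t])$ that kill every global Weyl module, whereas here one must exclude morphisms $1_\nu x 1_\mu$ of $\dot{\mathbf U}(\mathfrak g[t])$ that annihilate the weight-$\mu$ subspace of every $\mathbb W(\lambda)$. I expect to handle this by adapting the proof of Lemma \ref{lem:nokill} directly: applying the $N$-fold coproduct and a family of generic evaluations to a lift $x$ of $f$ produces, by Lemma \ref{lem:coprod-ind}, a non-zero element of $\mathbf U(\mathfrak g)^{\otimes N}$, and Lemma \ref{lem:Weyl-nontrivial} together with the weight grading of $x$ let one choose finite Weyl module tensor factors and a weight-$\mu$ vector on which this element acts to a non-zero weight-$\nu$ vector; the surjection from $\mathbb W(\lambda_1+\cdots+\lambda_N)$ described in Section \ref{sec:evaluation-modules} then exhibits $f$ acting non-trivially on $\mathbb W(\lambda_1+\cdots+\lambda_N)_\mu$, a contradiction. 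Note that the restriction to type ADE enters the argument only through Theorem \ref{global-isomorphism}.
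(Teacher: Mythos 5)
Your proposal takes the same essential route as the paper: surjectivity is Theorem~\ref{rho-surjective}, and injectivity is deduced by letting the current algebra act on the global Weyl modules $\mathbb{W}(\lambda)\cong\Tr(\check{\Ucat}^\lambda)$ and appealing to a ``no element kills everything'' statement. Your ``bookkeeping'' step identifying the pulled-back action with the standard current-algebra action is already contained in Proposition~\ref{Weyl-surject} and Theorem~\ref{global-isomorphism}, which are stated as maps of $\mathbf{U}(\mathfrak{g}[t])$-modules, so you could simply cite them there rather than reverifying.

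Where you go beyond the paper is in scrutinizing the citation of Lemma~\ref{lem:nokill}, and you are right to do so. The paper's proof of Theorem~\ref{thm:graded} reads Lemma~\ref{lem:nokill} off the shelf, but as you observe, that lemma concerns a nonzero element $u\in\mathbf{U}(\mathfrak{g}[t])$ acting on the \emph{entire} module $\bigoplus_\lambda\mathbb{W}(\lambda)$, whereas injectivity of $\rho$ requires that no nonzero morphism $1_\nu x 1_\mu$ of $\dot{\mathbf{U}}(\mathfrak{g}[t])$ can annihilate the \emph{$\mu$-weight space} of every $\mathbb{W}(\lambda)$. Taking a weight-homogeneous lift $x$ of weight $\nu-\mu$, one only learns that $x$ kills all $\mathbb{W}(\lambda)_\mu$, which is a priori weaker than $x$ killing everything. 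Your proposed remedy --- running the proof of Lemma~\ref{lem:nokill} and choosing the test vector to lie in the correct weight space --- does close this gap: in the proof of Lemma~\ref{lem:Weyl-nontrivial} the relevant vector is $v^+\otimes v^-\in V(\lambda_+)\otimes V(\lambda_-)$, of weight $\lambda_+ + w_0\lambda_-$, and since that proof requires only $\lambda_\pm\gg 0$ one is free to pick $\lambda_+$ dominant large and $\lambda_-$ dominant large so that $\lambda_+ + w_0\lambda_-$ equals any prescribed weight, in particular $\mu$; the same flexibility persists after the $N$-fold coproduct in Lemma~\ref{lem:nokill}. So the argument you sketch is correct and is, in effect, the clean version of what the published proof must have had in mind. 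If you write this up, the remaining work is just to state the strengthened version of Lemma~\ref{lem:nokill} explicitly and carry the weight control through the tensor-product step.
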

\begin{proof}
  Lemma~\ref{lem:nokill} implies that the map $\rho$ must be 
  injective, since any element
of its kernel would kill all global Weyl modules.  Combining with
Theorem \ref{rho-surjective} completes the proof.
\end{proof}

%
\section{An action on centers of 2-representations}
%

\subsection{Cyclic 2-categories and the center}

Given a linear 2-category $\CC$ we define the center $Z(\lambda)$ of an object $\lambda \in \Ob( \CC)$ as the commutative ring of endomorphisms $\CC(1_{\lambda}, 1_{\lambda})$, see \cite{GK}.  Note that in the linear 2-category \cat{AdCat} of additive categories, additive functors, and natural transformations, the center  $Z(\C)$ of an object $\C$ is the endomorphism ring of the identity functor $\Id_{\C}$ on $\C$.
Define the {\bf center of objects} of the 2-category $\CC$ as the $Z(\CC) = \bigoplus_{\lambda \in \Ob(\CC)} Z(\lambda)$.

There is a fairly general framework under which the trace of a linear
2-category $\CC$ acts on the center of objects $Z(\mathbf{\cal{K}})$
of any 2-representation $F\maps \CC \to \cal{K}$.  This happens
whenever the 2-category $\CC$ has enough ``coherent'' duality.  This
idea is captured by the notion of a {\em pivotal
  2-category}\footnote{This can be seen as a many object version of a pivotal monoidal category, see \cite{GMV} where traces in this context are studied.  M\"{u}ger points out in \cite[page 11]{Muger} a strict pivotal 2-category can be defined from Mackaay's work~\cite{MackSphere} on spherical 2-categories by ignoring the monoidal structure.
}~\cite{Muger, CKS,Lau1}. In a pivotal 2-category $\CC$ every 1-morphism $x \maps \lambda \to \lambda'$ is equipped with a specified biadjoint morphism $x^* \maps \lambda' \to \lambda$ and 2-morphisms
\begin{align*}
  {\rm ev}_x &\maps x^* x \to 1_{\lambda}    &{\rm coev}_x \maps 1_{\lambda'} \to x x^* \\
  \widetilde{{\rm ev}}_x &\maps x x^* \to 1_{\lambda'}    &\widetilde{{\rm coev}}_x \maps 1_{\lambda} \to x^* x
\end{align*}
satisfying the adjunction axioms.    Then given a 2-morphism $f \maps x \to y$ in $\CC$ we can define the left and right dual of $f$:
\begin{align*}
  & f^* := ({\rm ev}_y \hor \Id_{x^*})(\Id_{y} \hor f \hor \Id_{x^{*}})
  (\Id_{y^*} \hor {\rm coev}_x)\maps y^* \to x^* \nn \\
  & {}^*f := (\Id_{x^*}\hor \widetilde{{\rm ev}}_y)(\Id_{x^*}\hor f \hor \Id_{y^*})
   (\widetilde{{\rm coev}}_x \hor \Id_{y^*}) \maps y^* \to x^*.
\end{align*}
A pivotal 2-category $\CC$ is said to be {\em cyclic (with respect to
  the biadjoint structure)}, or simply a {\em cyclic 2-category},
when the left and right dual agree $f^* = ^*f$, or equivalently $f^{**}=f$.

Let $F\maps \CC \to \mathcal{K}$ be a 2-representation from a cyclic 2-category $\CC$ into a linear 2-category $\cal{K}$.
For $x \maps \lambda \to \lambda'$ a 1-morphism in $\CC$ and  $f \maps x \to x$ a 2-endomorphism in $\CC$ representing a class $[f]$ in $\Tr(\CC)$, then $[f]$ defines an operator $Z(F(\lambda)) \to Z(F(\lambda'))$ sending the element $c \maps 1_{F(\lambda)} \to 1_{F(\lambda)}$ to the element given by the composite
\begin{equation} \label{center-action}
   F(\widetilde{{\rm ev}}_{x})  \ver (F(f)\hor c \hor \Id_{F(x^{\ast})} )\ver F({\rm coev}_{x}) \maps
   1_{F(\lambda')} \to 1_{F(\lambda')} \in Z(F(\lambda')).
\end{equation}
The following proposition is immediate.
\begin{prop}
A 2-representation $F\maps \CC \to \mathcal{K}$ from a cyclic 2-category $\CC$ into a linear 2-category $\cal{K}$ induces an action of $\Tr(\CC)$ on the center of objects $Z(\mathcal{K})$ given by \eqref{center-action}.
\end{prop}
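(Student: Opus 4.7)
The plan is to verify three properties of the assignment $[f]\mapsto T_{[f]}$, where $T_{[f]}$ denotes the operator described by equation~\eqref{center-action}: (i) that $T_{[f]}$ is well-defined on trace classes, (ii) that the image $T_{[f]}(c)$ actually lies in $Z(F(\lambda'))$, and (iii) that the assignment respects identities and composition in $\Tr(\CC)$.

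First I would address well-definedness. Given a pair of 2-morphisms $g\maps x\to y$ and $h\maps y\to x$ in $\CC$ (with $x,y\maps\lambda\to\lambda'$), I need to show that $T_{[gh]}=T_{[hg]}$. Using the cyclic pivotal structure of $\CC$, the 2-morphism $g$ has a dual $g^*\maps y^*\to x^*$ and the defining adjunction relations let one rewrite
\[
F(\widetilde{\rm ev}_y)\circ (F(g)F(h)\otimes c\otimes\Id_{F(y^*)})\circ F({\rm coev}_y)
\]
by sliding $F(g)$ around the cup via $F(g^*)$, producing an expression involving $F(\widetilde{\rm ev}_x)$ and $F(h)F(g)$. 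Here the cyclicity condition $f^*={}^*\!f$ is essential, since this is exactly what guarantees that sliding a 2-morphism around the left or the right leg of an adjunction gives the same answer; this is the standard argument that the ``categorical trace'' $\widetilde{\rm ev}\circ(-\otimes 1)\circ{\rm coev}$ is cyclic in a pivotal $2$-category.

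Next I would check that $T_{[f]}(c)$ is actually central, i.e.\ defines a natural transformation of $\Id_{F(\lambda')}$. Let $\varphi\maps u\to v$ be any morphism in the category $F(\lambda')$; I must show that $\varphi$ commutes with $T_{[f]}(c)$. This is a diagrammatic manipulation: post-composing \eqref{center-action} with $\varphi$ and using that $c$ is central in $F(\lambda)$ (so it commutes with the horizontal whiskering by $F(x^*)$), together with the fact that the unit $F({\rm coev}_x)$ and counit $F(\widetilde{\rm ev}_x)$ are natural transformations, allows one to slide $\varphi$ past the entire expression. Cyclicity again ensures that the slide works symmetrically on either side.

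Then I would verify functoriality. The identity $[1_{1_\lambda}]\in\Tr(\CC)(\lambda,\lambda)$ acts by $c\mapsto F(\widetilde{\rm ev}_{1_\lambda})\circ(c\otimes\Id)\circ F({\rm coev}_{1_\lambda})=c$ by the zigzag identities, since $1_\lambda^*=1_\lambda$ and the unit/counit for the identity 1-morphism are identities. For composition, given $[f_1]\maps x\to x$ with $x\maps\lambda\to\lambda'$ and $[f_2]\maps y\to y$ with $y\maps\lambda'\to\lambda''$, the horizontal composite in $\Tr(\CC)$ is $[f_2\otimes f_1]\maps yx\to yx$, and $(yx)^*=x^*y^*$ with ${\rm coev}_{yx}$ and $\widetilde{\rm ev}_{yx}$ built from those of $x$ and $y$ in the standard way; plugging this in and using the interchange law in $\mathcal{K}$ yields $T_{[f_2]}\circ T_{[f_1]}$.

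The main obstacle is the first step: all the rest are essentially formal manipulations within a cyclic pivotal $2$-category, but the cyclic slide used to establish well-definedness on trace classes is precisely where the hypothesis $f^*={}^*\!f$ enters in a non-trivial way. Once that diagrammatic identity is in hand, the remaining verifications reduce to the standard coherence of pivotal duality together with naturality of the adjunction 2-morphisms.
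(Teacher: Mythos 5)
Your proposal is correct, and it fills in the details that the paper declares ``immediate'' without argument. The checklist you set out --- (i) independence of the chosen representative of a trace class, (ii) landing in the center, (iii) functoriality --- is exactly the right one, and you correctly locate the precise point where the cyclicity hypothesis $f^*={}^*f$ enters: sliding $g$ past $\widetilde{{\rm ev}}$ produces the left dual ${}^*g$, sliding it back past ${\rm coev}$ requires the right dual $g^*$, and these agree by assumption, so one obtains $T_{[gh]}=T_{[hg]}$. Your identity and composition checks in step (iii) are also the standard zigzag and nested-duals computations.

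One small remark: step (ii) requires no separate verification. By construction $T_{[f]}(c)$ is a vertical composite of $2$-morphisms in $\mathcal{K}$ whose source and target are $1_{F(\lambda')}$, hence it is \emph{a priori} an element of $Z(F(\lambda'))=\mathcal{K}\bigl(1_{F(\lambda')},1_{F(\lambda')}\bigr)$. The ``naturality'' you propose to check is already packaged into the statement that a $2$-morphism in $\mathbf{AdCat}$ is a natural transformation; re-deriving it diagrammatically is harmless but redundant. The substantive content of the proof is entirely in steps (i) and (iii), and your treatment of those is the intended argument.
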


In terms of graphical calculus, an element $c \maps 1_{\lambda} \to 1_{\lambda}$ of $Z(\mathcal{\lambda})$ can be represented by a closed diagram in weight $\lambda$.  A class $[f]$ is represented by a diagram on an annulus with interior region labelled $\lambda$ and exterior region labelled $\lambda'$.  The action of $[f]$ on $c$ is given by forgetting the annulus and placing the diagram for $c$ into the interior region.

\subsection{Cyclicity for the 2-category $\U_Q(\mf{g})$ }

The 2-category $\U_Q(\mf{g})$ has an obvious pivotal structure where
the adjunctions we use are those given by the cups and caps in our
graphical calculus.  The cyclic relations \eqref{eq_almost_cyclic} and  \eqref{eq_cyclic_dot}
implies that this structure is cyclic for an arbitrary choice of $t_{ij}$. Note that the 2-category
defined in \cite{CLau} is cyclic
if and only if  $v_{ij}= 1$ for
all $i,j$.

\begin{prop}\label{c-cyclic}
  For an arbitrary the choice of scalars $t_{ij}$, the pivotal
  structure on $\Ucat_Q(\mf{g})$
   is cyclic.
\end{prop}

\begin{cor}\label{center-weyl}
For arbitrary $\mf{g}$, the category  $\Tr(\Ucat_Q(\mf{g}))$ acts on the center of objects $Z(\mathcal{K})$ in any 2-representation $\Ucat_Q(\mf{g}) \to \cal{K}$.
If $\mf{g}$ is of type ADE and $v_{ij}=-1$, then
$Z(\U^{\lambda,\ast}(\mf{g}))$ and $Z(\check\U^{\lambda,\ast}(\mf{g}))$ can be
identified with the dual local and global Weyl modules, respectively,
using the isomorphism $\Tr(\Ucat_Q(\mf{g})) \cong
\dcurkg$ of Theorem \ref{thm:graded}.
\end{cor}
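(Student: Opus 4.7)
The plan is to reduce the corollary to its two pieces. The first assertion is essentially formal: Proposition~\ref{c-cyclic} produces a cyclic pivotal structure on $\Ucat_Q(\mf{g})$ for any choice of scalars compatible with $t_{ij}$, so the Proposition preceding it supplies an action of $\Tr(\Ucat_Q(\mf{g}))$ on the center of objects $Z(\mathcal{K})$ of any 2-representation. There is nothing to check beyond assembling these two facts.

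For the second assertion, I would begin by using Theorem~\ref{R-isomorphism} to identify $\U^\lambda(\mf{g})$ and $\check\U^\lambda(\mf{g})$ with the categories of projective modules over $R^\lambda_\mu$ and $\check R^\lambda_\mu$ (summed over $\mu$). For such a category of projectives the center is the center of the underlying algebra and the trace is its 0th Hochschild homology $A/[A,A]$. The symmetric Frobenius structure on the cyclotomic quotient $R^\lambda$ (and its deformed version $\check R^\lambda$, whose Frobenius trace is the one used in the last paragraph of the proof of Theorem~\ref{local-isomorphism}) provides a non-degenerate homogeneous pairing
\[
\langle -,-\rangle\maps Z(A)\otimes \Tr(A)\longrightarrow \k,
\qquad
\langle z,[a]\rangle=\tau(za),
\]
identifying $Z(A)$ with the graded dual of $\Tr(A)$ (for the global case one takes the graded dual componentwise in each weight space, which is finite-dimensional after a further grading shift since $\check R^\lambda_\mu$ is itself finite over its center $\mathbb{A}_\lambda$).

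The heart of the argument is the compatibility of the two actions. Given a 2-morphism $f\maps x\to x$ with $x\maps \lambda\to\lambda'$ and a central element $c\in Z(\lambda)$, the element $[f]\cdot c$ is defined by the annular diagram \eqref{center-action}, while the analogous action of $[f]$ on $[c]\in \Tr$ is given by horizontal composition. Using the cyclicity of the biadjoint structure (Proposition~\ref{c-cyclic}) together with the standard identity for Frobenius traces, I would verify the adjunction formula
\[
\langle [f]\cdot c,[a]\rangle \;=\; \langle c,[a]\cdot^{\!*}\!\! [f]\rangle
\]
where $[a]\mapsto [a]\cdot^{\!*}\!\! [f]$ is the right action of $\Tr(\Ucat_Q(\mf{g}))$ on $\Tr(\U^\lambda)$ obtained by composing $a$ with the dual of $f$. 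Under the anti-involution of $\dot{\bfU}(\mf{g}[t])$ swapping $\mathcal E$'s and $\mathcal F$'s (which is what the biadjoint dual induces at the level of $\Tr$ via Theorem~\ref{thm:graded}), this right action becomes a left action, and the pairing realizes $Z$ as the graded dual of $\Tr$ as a $\dot{\bfU}(\mf{g}[t])$-module.

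Combining this with Theorem~\ref{local-isomorphism}, Theorem~\ref{global-isomorphism}, and Theorem~\ref{thm:graded} gives $Z(\U^\lambda(\mf{g}))\cong W(\lambda)^\vee$ and $Z(\check\U^\lambda(\mf{g}))\cong \mathbb{W}(\lambda)^\vee$, which is the statement. The main obstacle will be the compatibility verification: one must keep careful track of the rescaling scalars $c_{i,\lambda}^{\pm}$ introduced in Proposition~\ref{c-cyclic} when comparing the annular action \eqref{center-action} with a composition in $\Tr$, and, in the global case, one must check that the pairing is non-degenerate on each graded component separately so that the graded dual identification is well-defined despite $\mathbb{W}(\lambda)$ being infinite-dimensional.
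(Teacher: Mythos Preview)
Your proposal is correct and follows essentially the same approach as the paper: both rely on the Frobenius structure on the cyclotomic quotients to set up a non-degenerate pairing between the center and the trace, and then check that this pairing is compatible with the current algebra actions so as to identify the center with the dual Weyl module via Theorems~\ref{local-isomorphism}, \ref{global-isomorphism} and~\ref{thm:graded}.

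The paper is considerably terser than your outline. It invokes \cite[Thm.~3.18]{Web5} directly to say that $\U^\lambda(\mf{g})$ is ``cyclic Frobenius'' (and $\check\U^\lambda(\mf{g})$ cyclic Frobenius over the base ring $\mathbb{A}_\lambda$), which packages together both the non-degeneracy of the pairing and the cyclicity needed for compatibility; your handling of the global case via finite-dimensionality of graded pieces is subsumed by the paper's phrase ``over the base ring $\mathbb{A}_\lambda$''. The compatibility verification you single out as the main obstacle is exactly what the paper compresses into the sentence ``By the definition of the current algebra action on the center, this pairing induces a duality of current algebra modules''; your explicit adjunction formula and the passage through the biadjoint anti-involution are a reasonable way to unpack that sentence, though the paper does not spell them out.
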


\begin{proof}
  Only the last statement needs a proof. By Theorem
  \ref{R-isomorphism}, we can instead consider the centers of the
  rings $R^\lambda$ and $\check R^\lambda$.  By Theorem \ref{frobenius},
  the ring $R^\lambda$ is symmetric Frobenius
 and $\check R^\lambda$ is symmetric Frobenius over the base
  ring $\mathbb{A}_\lambda$.
  Under any symmetric Frobenius pairing, the center of a ring and its
  commutator subspace will be mutual orthogonals.

Thus,
  $\operatorname{tr}_\lambda$ induces a duality between
  $W(\lambda)$ and $Z(\U^{\lambda,\ast}(\mf{g}))$ and
  $\check{\operatorname{tr}}_\lambda$ a duality as free modules over
  $\mathbb{A}_\lambda$ between $\mathbb W(\lambda)$ and $Z(\check\U^{\lambda,\ast}(\mf{g}))$. By the definition of the current algebra action on
  the center, this pairing induces a duality of current algebra
  modules, so the result follows.
\end{proof}



%

\end{document}